\documentclass[10pt,letterpaper,reqno]{amsart}
\usepackage{amsfonts, amsmath, amssymb, amscd, amsthm, graphicx,enumerate}
\usepackage{color}
\usepackage{hyperref}

\makeatletter
\def\blfootnote{\xdef\@thefnmark{}\@footnotetext}
\makeatother

\hypersetup{colorlinks=true,linkcolor=red,citecolor=blue,urlcolor=blue}

\newtheorem{thm}{Theorem}[section]
\newtheorem{mainthm}{Theorem}

\newtheorem{cor}[thm]{Corollary}
\newtheorem{lem}[thm]{Lemma}
\newtheorem{prop}[thm]{Proposition}

\theoremstyle{definition}
\newtheorem{defn}[thm]{Definition}
\theoremstyle{remark}
\newtheorem{rem}[thm]{Remark}

\newtheorem{conv}[thm]{Convention}

\numberwithin{equation}{section}

\renewcommand{\phi}{\varphi}
\renewcommand{\epsilon}{\varepsilon}
\newcommand{\N}{\mathbb N}
\newcommand{\R}{\mathbb R}
\newcommand{\Z}{\mathbb Z}
\newcommand{\SC}{\operatorname{SC}}
\newcommand{\GC}{\operatorname{GC}}
\newcommand{\supp}{\operatorname{supp}}
\newcommand{\Hull}{\operatorname{Hull}}
\newcommand{\diam}{\operatorname{diam}}

\newcommand{\Cay}{\operatorname{Cay}}

\newcommand{\calH}{\mathcal H}
\newcommand{\calT}{\mathcal T}
\newcommand{\calM}{\mathcal M}

\newcommand{\Sat}{\operatorname{Sat}}
\newcommand{\dd}{\partial_2}

\begin{document}
\raggedbottom

\title[Uniquely Ergodic Branching Subset Currents]{Uniquely Ergodic Branching Subset Currents on Free and Surface Groups}

\author[I. Kapovich]{Ilya Kapovich}

\address{Department of Mathematics and Statistics, Hunter College of CUNY\newline
  \indent 695 Park Ave, New York, NY 10065
  \newline \indent  {\url{http://math.hunter.cuny.edu/ilyakapo/}}}
\email{ik535@hunter.cuny.edu}
\keywords{subset current, subset lamination, algebraic lamination, free group, hyperbolic surface group, uniquely ergodic subshift, Toeplitz subshift, Choquet simplex, rooted tree, subword complexity, Rauzy graph, switch equations, quasiconvex subgroup}

\subjclass[2020]{Primary 20F65, Secondary 20F67, 37B10, 37A25}

\date{}
\hypersetup{
  pdftitle={Uniquely Ergodic Branching Subset Currents on Free and Surface Groups},
  pdfauthor={Ilya Kapovich},
  pdfsubject={Branching subset currents: unique ergodicity, Choquet universality, and complexity},
  pdfkeywords={subset currents, free groups, surface groups, unique ergodicity, Choquet simplex, Toeplitz subshifts, rooted trees, complexity}
}

\begin{abstract}
\emph{Subset currents} on free groups, on surface groups and, more generally, on word-hyperbolic groups, were introduced by Kapovich and Nagnibeda, and further studied by Sasaki. Subset currents extend ordinary geodesic currents and provide measure-theoretic generalizations of conjugacy classes of infinite quasiconvex subgroups. Their supports can be viewed as ``branching laminations".

For free groups and surface groups, we construct non-rational subset currents with genuinely branching supports that carry no other subset currents except scalar multiples.  For every nonabelian finite rank free group and every closed hyperbolic surface group, we show that such a current can be chosen so that its support is uncountable, is disjoint from the locus of two-point boundary subsets, and consists entirely of Cantor subsets of the boundary.  More generally, we prove that every nonempty metrizable Choquet simplex occurs as a compact convex base for the cone of currents carried by a minimal genuinely branching subset lamination.  The same universality phenomenon extends, apart from the usual rational-current terminology, to every non-elementary torsion-free hyperbolic group.

We also study finite-pattern complexity.  We prove that if $R_L(n)$ counts the allowed \emph{round patterns} of radius $n$ in a minimal subset lamination $L$, then $R_L(n)=O(n)$ forces the carried-current cone to be finite-dimensional, while infinite-dimensionality forces $R_L(n)/n\to\infty$; this universal threshold is sharp.  We show that the coarse growth type of $R_L$ is independent of the chosen free basis and, more generally, of the chosen finite marked graph.  We also show that a refined switch system recovers the exact dimension of the carried-current cone.
\end{abstract}

\maketitle
\begingroup
\setlength{\parskip}{0pt}
\setcounter{tocdepth}{1}
\tableofcontents
\endgroup

\section{Introduction}
\enlargethispage{4pt}

Geodesic currents, introduced by Bonahon for hyperbolic surfaces~\cite{Bon86,Bon88}, give a measure-theoretic enlargement of the space of closed geodesics.  Bonahon's intersection form extends geometric intersection number, while Liouville currents relate the theory to Teichm\"uller space and Thurston's compactification~\cite{Bon88}.

For a non-elementary word-hyperbolic group $G$, the cone $\GC(G)$ of \emph{geodesic currents} consists of locally finite positive Borel measures on $\partial^2G=\{(p,q)\in\partial G\times\partial G:p\ne q\}$ that are $G$-invariant and flip-invariant.  The standard counting current $\eta_g$ associated to an infinite-order element $g\in G$ generalizes its conjugacy class; nonnegative multiples of counting currents are called \emph{rational}, and are dense in $\GC(G)$~\cite{Bon91}.

For free groups, the theory began with Martin's thesis~\cite{Martin95} and was developed further in~\cite{Kap06,KapLus09,KapLus10}.  Geodesic currents complement Culler--Vogtmann Outer space: the Kapovich--Lustig intersection form pairs very small $F_N$-trees with currents and extends translation length on counting currents~\cite{KapLus09,KapLus10}.  Subset currents replace boundary pairs by arbitrary closed boundary subsets and generalize conjugacy classes of infinite quasiconvex subgroups.

Let $G$ be a non-elementary word-hyperbolic group and let
\[
 \calH(\partial G)=\{S\subseteq \partial G: S \text{ is closed and } \#S\ge 2\}
\]
carry the Vietoris (equivalently, Hausdorff) topology; see Subsection~\ref{subsec:subset-laminations}.  A \emph{subset current} is a $G$-invariant locally finite positive Borel measure on $\calH(\partial G)$; write $\SC(G)$ for their cone (Definition~\ref{def:subset-current}).  Subset currents were introduced by Kapovich--Nagnibeda~\cite{KN13}; Sasaki~\cite{Sas22} developed the theory for closed hyperbolic surface groups.  After passing from ordered to unordered boundary pairs, the geodesic-current cone $\GC(G)$ embeds as the currents supported on
\[
 \dd G:=\{S\in\calH(\partial G):\#S=2\}.
\]
Following~\cite[Section~10.7]{KN13}, we call a closed $G$-invariant subset of $\calH(\partial G)$ a \emph{subset algebraic lamination}, or simply a \emph{subset lamination}.  A subset algebraic lamination is \emph{geodesic} if contained in $\dd G$, \emph{branching} if not contained in $\dd G$, and \emph{genuinely branching} if disjoint from $\dd G$; see Definition~\ref{def:subset-lamination} for these notions and minimality, and Subsection~\ref{subsec:subset-laminations} for convex-hull realizations.  In our constructions every leaf is a Cantor subset of $\partial G$.  We write $L_{\mathrm{full}}(G):=\calH(\partial G)$ for the \emph{full support subset lamination} on $G$.

When defined for an infinite finitely generated subgroup $H\le G$, the counting current $\eta_H$ is the locally finite atomic measure on the $G$-orbit of its limit set $\Lambda(H)$, with the standard multiplicity; see Subsection~\ref{subsec:subset-currents}.  Subset currents of the form $c\eta_H$, where $c\ge 0$ and $H$ as above, are called \emph{rational}. Rational currents are dense in $\SC(G)$ in the free-group case~\cite[Theorem~5.8]{KN13} (see also~\cite[Theorem~5.1]{Kap17}) and in the closed-surface case~\cite[Theorem~8.21]{Sas22}.

Beyond geodesic and rational currents, relatively few branching examples have been studied.  The probabilistically motivated \emph{absolute uniform subset current} of~\cite[Section~9]{KN13} has full support $L_{\mathrm{full}}(F_N)=\calH(\partial F_N)$, which carries \emph{every} subset current.  Its round complexity, defined in Definition~\ref{def:round-switch-complexity}, grows doubly exponentially; see Remark~\ref{rem:full-support-round-complexity}.

Here we construct non-rational uniquely ergodic examples with genuinely branching supports, and more generally study the possible carried-current cones and their dimensions.

A nonzero subset current $\mu$ is called \emph{support-uniquely ergodic} when every subset current supported on $\supp(\mu)$ is a nonnegative scalar multiple of $\mu$; see Definition~\ref{def:SUE} for the formal definition.  We reserve ordinary ergodicity for extremality of an invariant measure or of a carried-current ray.

Our first main result is:

\begin{mainthm}\label{mainthm:main}
Let $G$ be one of the following:
\begin{enumerate}[(1)]
\item $G=F_N$, where $N\ge2$;
\item $G=\pi_1(\Sigma)$, where $\Sigma$ is a closed hyperbolic surface.
\end{enumerate}
Then there exists a nonzero subset current $\mu\in\SC(G)$ with the following properties:
\begin{enumerate}[(a)]
\item $\mu$ is support-uniquely ergodic:
\[
 \{\nu\in\SC(G):\supp(\nu)\subseteq\supp(\mu)\}
 =\R_{\ge0}\mu.
\]
\item $\supp(\mu)\cap\dd G=\varnothing$.  In fact every $S\in\supp(\mu)$ is a Cantor subset of $\partial G$.
\item $\supp(\mu)$ is uncountable.  Consequently $\mu$ is not rational.
\end{enumerate}
\end{mainthm}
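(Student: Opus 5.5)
The plan is to build $\mu$ from a uniquely ergodic symbolic system and transport it to $\calH(\partial G)$ through a coding by rooted trees, handling the free group first and then the surface group.

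\textbf{Free group.} Fix a free basis $A$ of $F_N$ and let $T$ be the Cayley tree. A closed subset $S\subseteq\partial F_N$ with $\#S\ge2$ is determined by its convex hull $\Hull(S)\subseteq T$, a subtree without terminal vertices. Cantor sets correspond to hulls in which every branch eventually branches again. I would take a minimal, uniquely ergodic Toeplitz-type subshift $X$ over a finite alphabet. Each symbol of $X$ will prescribe a finite decoration of a base path: at specified positions a side branch is attached, and each side branch is itself a self-similar branching subtree.

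This defines a $G$-equivariant, continuous, injective map $\Phi$ from a suitable compact space (essentially the suspension of $X$ along rooted decorated trees) into $\calH(\partial F_N)$. The hull of every point in the image branches with bounded gaps, so every leaf is Cantor. This gives (b). It also gives a closed, genuinely branching lamination $L=\overline{G\cdot\Phi(X)}$.

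To obtain (a), I would use the correspondence between subset currents carried by $L$ and finitely additive, switch-compatible weights on the finite round patterns (the cylinder sets of $L$). By minimality, each such weight system is determined by the frequencies of patterns. Because the decorations are read off locally from $X$, these frequencies are determined by a shift-invariant probability measure on $X$, up to a scalar fixed by the normalization along the base path. Unique ergodicity of $X$ therefore forces the carried cone to be a single ray.

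\textbf{Main obstacle.} The hard step is the rigidity statement: every subset current carried by $L$ comes from an invariant measure on $X$. The danger is that the branching trees admit more consistent weight assignments than the base shift does, for example weights concentrated along different choices of spine. I would first try to design the decorations so that the spine is canonically recoverable from any leaf, say as the unique bi-infinite path through vertices of maximal local branching type. Then the restriction of a current to cylinders centred on spine edges yields a shift-invariant measure, and the switch equations determine the current from that measure. Checking that the spine is recognizable, and that the side branches carry no independent parameters, is the technical core of the argument.

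\textbf{Remaining conclusions.} For (c), $X$ is uncountable (a non-periodic minimal Toeplitz subshift) and $\Phi$ is injective, so $\supp(\mu)=L$ is uncountable. Rational currents have countable support, namely a single $G$-orbit, so $\mu$ is not rational.

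\textbf{Surface group.} Let $G=\pi_1(\Sigma)$. I would embed a free quasiconvex subgroup $F_2\le G$; for instance, a Schottky subgroup of a closed hyperbolic surface group is quasiconvex, so $\partial F_2\subseteq\partial G$ via a $\pi_1$-injective subsurface. Then I would induce the current $\mu_0$ from the free case, via a pushforward construction, to
$$\mu=\sum_{gF_2\in G/F_2} g_*\mu_0.$$
This is locally finite by quasiconvexity. Its support is $G\cdot\supp(\mu_0)$, which is closed. Any current carried by it restricts, by the disjointness of translates away from a local finiteness argument, to a current on $F_2$ carried by $\supp(\mu_0)$. Hence that current is a multiple of $\mu_0$, which gives (a), and (b) and (c) transfer directly.
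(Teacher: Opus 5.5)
There is a genuine gap, and it sits exactly where you place the ``technical core'': part~(a) is never proved, and the spine-recognition design you propose for it cannot work. By Theorem~\ref{thm:KN-rooted}, a current carried by $L$ is a finite measure $\nu$ on $\calT_1(X)$ that is invariant under the partial root changes $r_g$. This invariance is a mass-transport principle: for every Borel $f(T,g)\ge0$ with $g\in VT$,
\[
 \int\sum_{g\in VT}f(T,g)\,d\nu(T)=\int\sum_{h\in VT}f(h^{-1}T,h^{-1})\,d\nu(T),
\]
that is, mass sent from the root equals mass received at the root.

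Suppose, as you intend, that the spine $\sigma(T)$ is equivariantly recognizable. Let each vertex send unit mass to the decorated spine vertex nearest to its projection onto $\sigma(T)$, splitting ties. Every vertex sends mass $1$, so the left side is the finite $\nu$-mass of the trees with a decorated spine. But a decorated spine vertex receives mass from its whole side branch, which is infinite because hulls have no terminal vertices. Every other vertex receives nothing, so the right side is $0$ or $\infty$. Hence every carried current gives zero mass to the trees $G\cdot\Phi(X)$ you constructed.

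Such a current instead lives on the limit trees obtained by pushing the root deep into the side branches. Their structure is that of the side branches, not of $X$; for periodic side branches they can even be hulls of subgroup limit sets, which carry rational currents. Unique ergodicity of $X$ therefore says nothing about $\SC(L)$, and the conclusions $\supp(\mu)=L$ and~(c) are unsupported. Without recognizability you are left with the ambiguity you yourself flag. The underlying reason is non-amenability: an invariant measure spreads mass evenly over the vertices of an exponentially growing tree, so data placed on a single path is invisible to it.

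The missing idea is to place symbolic data at \emph{every} vertex of a leaf, so that root change literally is the shift. The paper takes a minimal uniquely ergodic subshift $\Omega\subseteq\{0,1\}^{B}$ over the free group $B=F(u,v)$. One choice is Fibonacci pulled back along $F(u,v)\to\Z$, which labels each vertex by its height rather than labelling one path. It then embeds $X_B=\Cay(B,\{u,v\})$ as an exactly $4$-regular subtree $T_x$ of the Cayley tree of $F(a_0,a_1,b_0,b_1)$, sending the $u$- and $v$-edges at $w$ to edges labelled $a_{x(w)}$ and $b_{x(w)}$. The identity $\kappa_x(w)^{-1}T_x=T_{x\cdot w}$ of Lemma~\ref{lem:root-shift} identifies the root-change groupoid of $\{T_x\}$ with $\Omega\rtimes B$. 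Invariant measures then correspond bijectively (Proposition~\ref{prop:measure-correspondence}), and (a)--(c) follow with no recognition problem.

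Your transfer step also needs repair. Restriction to $\calH(\Lambda H)$ inverts induction only if the carriers $\calH(g\Lambda H)$ are pairwise disjoint. The paper gets this from malnormality of $H$ (Kielak--Linton) together with $\Lambda H\cap g\Lambda H=\Lambda(H\cap gHg^{-1})$. A general Schottky subgroup need not be malnormal, and then distinct translated limit sets can intersect.
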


Theorem~\ref{mainthm:main} is proved in Subsection~\ref{subsec:proof-main}.

Theorem~\ref{mainthm:main} is a special case of a more flexible universality statement.  For a subset lamination $L\subseteq\calH(\partial G)$, write
\[
 \SC_G(L):=\{\mu\in\SC(G):\supp(\mu)\subseteq L\}
\]
for its carried-current cone; compact convex bases and projectively ergodic currents are defined in Definition~\ref{def:carried-current-cone}.

\begin{mainthm}[Universality]\label{mainthm:universality}
Let $G$ be one of the following:
\begin{enumerate}[(1)]
\item $G=F_N$, where $N\ge2$;
\item $G=\pi_1(\Sigma)$, where $\Sigma$ is a closed hyperbolic surface.
\end{enumerate}
Let $\mathcal K$ be a nonempty metrizable Choquet simplex, in the sense of Definition~\ref{def:choquet-simplex}.  Then there exists an uncountable minimal genuinely branching subset lamination
\[
 L\subseteq\calH(\partial G)
\]
with the following properties:
\begin{enumerate}[(a)]
\item Every $S\in L$ is a Cantor subset of $\partial G$.  In particular $L\cap\dd G=\varnothing$.
\item The cone $\SC_G(L)$ has a compact convex base $\mathfrak B(L)$ affinely homeomorphic to $\mathcal K$.
\item Every nonzero $\mu\in\SC_G(L)$ satisfies $\supp(\mu)=L$.  Consequently every such $\mu$ is non-rational.
\end{enumerate}
Under the affine identification in~(b), the extreme points of $\mathcal K$ correspond exactly to the ergodic rays of $\SC_G(L)$.
\end{mainthm}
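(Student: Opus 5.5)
The plan is to transfer the problem to symbolic dynamics.
We encode a minimal subshift by a branching lamination in a way that induces an affine homeomorphism between invariant probability measures and a base of carried currents. The dynamical input is the Downarowicz theorem: every nonempty metrizable Choquet simplex is affinely homeomorphic to the simplex $M_\sigma(X)$ of shift-invariant Borel probability measures of some minimal Toeplitz subshift $X\subseteq A^{\Z}$. A Toeplitz subshift that is not periodic is uncountable, and we may choose it so. It then suffices to construct, for $G=F_N$ or $G=\pi_1(\Sigma)$ and any minimal uncountable subshift $X$, a minimal genuinely branching lamination $L_X$ with Cantor leaves and a linear homeomorphism $\SC_G(L_X)\cong \R_{\ge0}M_\sigma(X)$. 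Such a map preserves extreme rays and sends nonzero currents to measures with full support, since minimality of $X$ gives every invariant measure full support. Part (c) then follows. Non-rationality is immediate: a rational current has support a single $G$-orbit, which is countable, while $L_X$ is uncountable.

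The construction has three steps.

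\textbf{Step 1.} Replace the bi-infinite sequence by a rooted tree pattern. To each $x\in X$ we associate a quasiconvex ``branching leaf'' in the Cayley graph (free case) or in a fixed quasiconvex free subgroup (surface case). Concretely, fix a quasi-isometrically embedded free subgroup $F_2=\langle a,b\rangle\le G$. We decorate a bi-infinite $a$-geodesic by attaching, at each position $n$, a finite binary gadget of $b$-branches whose shape encodes the letter $x_n$. To make every leaf a Cantor set, each gadget spawns infinitely branching subtrees whose labels are determined recursively by the shifted sequence. The rule is chosen so that the closed leaf $S_x=\Lambda(T_x)$ has no isolated points, and the branching at every scale is read off from $x$. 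We then set $L_X=\overline{G\cdot\{S_x:x\in X\}}$.

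\textbf{Step 2.} Show $L_X$ is closed and has the right structure. The key point is a \emph{recognizability} statement: from any leaf $S\in L_X$ one can canonically recover a distinguished spine geodesic, its letter sequence in $X$, and the relevant coset position, up to the $\Z$-action. This yields a $G$-equivariant homeomorphism
\[
L_X\;\cong\;(G\times X)/\Z,\qquad \Z \text{ acting by } (g,x)\mapsto(g a^{-1},\sigma x).
\]
For $G=F_N$ we pass to a subgroup and extend to all of $F_N$. Minimality of $L_X$ then follows from minimality of $X$, and the leaves contain no two-point sets.

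\textbf{Step 3.} Identify currents. Locally finite $G$-invariant measures on $(G\times X)/\Z$ correspond bijectively and linearly to $\sigma$-invariant finite measures on $X$. One direction is the standard suspension/counting argument: a Borel fundamental domain is $\{1\}\times X$ modulo cosets of $\langle a\rangle$, and counting over $G/\langle a\rangle$ gives the correspondence. Local finiteness matches finiteness of the measure on $X$. Normalizing by the mass of the cylinder of leaves whose spine passes through $1$ gives a compact convex base affinely homeomorphic to $M_\sigma(X)\cong\mathcal K$, and ergodic rays correspond to ergodic measures, i.e.\ to extreme points.

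\textbf{Main obstacle.} I expect the hardest step to be Step 2's recognizability in the surface case. Closure under Hausdorff limits must not create new leaves: limits of $gS_x$ where the spine escapes to infinity could in principle produce leaves without a recoverable spine, or even two-point sets. To handle this, I would first make the gadgets contain uniquely identifiable marker patterns that are rigid in $F_2$, with a spine uniquely determined by bounded local combinatorics. I would then use quasiconvexity of $F_2\le\pi_1(\Sigma)$, via Morse-lemma uniform constants, to transfer the rigidity to boundary subsets. A limit with escaping spine would have to see arbitrarily large branching-free regions, which the uniform density of gadgets forbids.
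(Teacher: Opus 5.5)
There is a genuine gap, and it is more basic than the recognizability issue you flag. A lamination whose leaves carry a canonical, equivariantly recoverable spine, with infinite branches hanging off it, carries no nonzero subset current at all.

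Work in the Cayley tree of $\langle a,b\rangle$, where your construction lives. Because $\Hull(S)$ has no degree-one vertices, every branch attached to the spine is infinite. Take the measure of Step~3 (counting over $G/\langle a\rangle$ times an invariant $m$) and the compact set $C_1=\{S:1\in\Hull(S)\}$. Choosing coset representatives not ending in $a^{\pm1}$ and unfolding gives
\[
 \mu(C_1)=\int_X\#\{v\in T_x:\ \text{the projection of } v \text{ to the spine is } 1\}\,dm(x)=\infty
\]
whenever $m\neq0$, since every gadget spawns an infinite subtree. So your claim that local finiteness matches finiteness of $m$ is false. The same mass-transport argument (send each hull vertex to its projection on the spine) shows that every locally finite $G$-invariant measure gives zero mass to the set of spined branching leaves.

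Step~2 fails for a related reason. Leaflessness forces vertices of $T_x$ arbitrarily far from the spine. Limits of $g_n^{-1}S_x$ with $g_n$ deep in a branch lie in $L_X=\overline{G\cdot\{S_x\}}$, but if the spine is detected by bounded local markers, as you intend, these limits contain no spine. Uniform density of gadgets does not prevent this: those regions are full of branching, just far from the spine. Hence $L_X\not\cong(G\times X)/\Z$, and $L_X$ is not minimal. All currents it carries live on the spineless part, whose carried cone has no reason to be $\R_{\ge0}M_\sigma(X)$.

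The missing idea is to make every vertex of a leaf hull play the same role, so that root change on the rooted transversal is literally an orbit relation of the symbolic system. The paper embeds the whole Cayley tree of $B=F(u,v)$ into that of $F_4=F(a_0,a_1,b_0,b_1)$. It sends the $u$- and $v$-edges at $w$ to edges labelled $a_{x(w)}$ and $b_{x(w)}$, so that $\kappa_x(w)^{-1}T_x=T_{x\cdot w}$. Every re-rooting of $T_x$ is therefore again some $T_{x\cdot w}$. Consequently $K_\Omega$ is closed and root-change invariant, so Hausdorff limits create no new leaves. The Kapovich--Nagnibeda rooted-tree theorem then identifies $B$-invariant finite measures on $\Omega$ with exactly the currents carried by $\Sat(K_\Omega)$.

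A Downarowicz $\Z$-Toeplitz flow $Y$ is first pulled back along $F(u,v)\to\Z$, $u\mapsto1$, $v\mapsto0$, which leaves the invariant-measure simplex unchanged. Every $u$-line then reads a shift of $y$, so there is a ``spine'' through every vertex and none is distinguished.

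Finally, transferring to $F_N$ and to surface groups needs a \emph{malnormal} quasiconvex $H\cong F_4$ (Kielak--Linton) together with Sasaki induction. Malnormality makes the translates $g\Lambda H$ pairwise disjoint, which is what makes restriction inverse to induction and preserves the carried cone and minimality. Your ``pass to a subgroup and extend'' step, and the use of an arbitrary quasi-isometrically embedded $F_2$, leave this unaddressed.
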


Theorem~\ref{mainthm:universality} is proved in Section~\ref{sec:universality}.

The construction begins in $F_4$.  Put $B=F(u,v)$ and let $\Omega\subseteq\{0,1\}^{B}$ be an infinite minimal uniquely ergodic subshift for the right shift of Convention~\ref{conv:tree-coding}.  We encode $x\in\Omega$ as an exactly $4$-regular subtree $T_x$ of the Cayley tree of
\[
 F=F(a_0,a_1,b_0,b_1).
\]
The four incident edges at the vertex representing $w\in B$ remember the values of $x$ at $w$, $wu^{-1}$, and $wv^{-1}$.  The key identity is
\[
 \kappa_x(w)^{-1}T_x=T_{x\cdot w},
\]
where $\kappa_x(w)$ is the vertex corresponding to $w$ and $x\cdot w$ is the shift by $w$.  Thus $\Omega\rtimes B$ is the root-change groupoid of $\{T_x:x\in\Omega\}$; see Proposition~\ref{prop:measure-correspondence}.  The rooted-tree model then turns the unique invariant probability on $\Omega$ into a support-uniquely-ergodic subset current.

An elementary choice pulls back an aperiodic minimal uniquely ergodic $\Z$-subshift, for example the Fibonacci subshift, along $F(u,v)\twoheadrightarrow\Z$; here the $F_2$-action factors through $\Z$.  For genuinely nonabelian dynamics, Cecchi Bernales--Cortez--G\'omez~\cite[Proposition~3.8 and Theorem~1.2]{CCG24} give a free regular uniquely ergodic binary Toeplitz $F_2$-subshift, which cannot factor through a proper quotient.

For $F_N$ with $N=2,3$ and for surface groups, we induce through a malnormal quasiconvex $H\cong F_4$; see Subsection~\ref{subsec:prelim-induction}.  Such a subgroup exists by Kielak--Linton~\cite[Theorem~1.11]{KL24}; compare~\cite{Kap99}, and Sasaki's map~\cite[Theorem~4.3]{Sas22} gives the induced current.  Malnormality makes distinct translated limit sets disjoint and preserves projective uniqueness.

For Theorem~\ref{mainthm:universality}, Downarowicz~\cite[Theorem~1]{Dow91} realizes every nonempty metrizable Choquet simplex as the invariant-measure simplex of a minimal binary Toeplitz flow.  The tree coding identifies its invariant-measure cone with the carried-current cone, and malnormal induction preserves that cone.  Hence the simplex survives unchanged in every group in the theorem.

\begin{rem}\label{rem:broader}
The same induction proof gives support-unique ergodicity and genuine branching for every non-elementary torsion-free hyperbolic group.  In that generality non-rationality should be phrased relative to the available counting currents; for free and closed-surface groups Theorem~\ref{mainthm:main} has the stated form.
\end{rem}

\begin{rem}\label{rem:delta-boundary}
The Dirac mass $\delta_{\partial G}$ is a trivial one-point-support example, but for $F_N$ or a closed surface group it is the counting current of the finite-index subgroup $G$ and hence rational.  Our supports are instead uncountable, genuinely branching, and consist of Cantor subsets of the boundary.
\end{rem}

In the second part of the paper, we investigate how much information about the carried-current cone can be detected from finite-pattern complexity.  Raw finite-subtree complexity (Definition~\ref{def:subtree-complexity}) is too coarse. Round complexity, based on the round trees introduced in~\cite{Kap17}, is more effective.

For the next two statements, fix $F_N=F(A)$, $N\ge2$.  For a minimal subset lamination $L\subseteq\calH(\partial F_N)$ carrying a nonzero current, let $d(L)$ be the dimension of its carried-current span and $R_L(n)$ its round complexity (Definitions~\ref{def:carried-current-cone} and~\ref{def:round-switch-complexity}); for $n\ge2$ put
\[
 E_L(n):=R_L(n)-R_L(n-1),
\]
the round extension excess of Definition~\ref{def:extension-excess}.

\begin{mainthm}[Round complexity and carried-current dimension]\label{mainthm:complexity}
Let $F_N=F(A)$, where $N\ge2$, and let $L\subseteq\calH(\partial F_N)$ be a minimal subset lamination carrying a nonzero subset current.  Then
\[
 \boxed{
 d(L)\le 1+\liminf_{n\to\infty}E_L(n)
 \le 1+\liminf_{n\to\infty}\frac{R_L(n)}n.
 }
\]
Moreover:
\begin{enumerate}[(1)]
\item If $d(L)=d<\infty$, then
\[
 E_L(n)\ge d-1
\]
for all sufficiently large $n$, and hence
\[
 R_L(n)\ge (d-1)n-O(1).
\]
In this case $L$ carries exactly $d$ ergodic rays.
\item If $d(L)=\infty$, then
\[
 E_L(n)\longrightarrow\infty
 \qquad\text{and}\qquad
 \frac{R_L(n)}n\longrightarrow\infty.
\]
\end{enumerate}
In particular,
\[
 R_L(n)=O(n)\quad\Longrightarrow\quad d(L)<\infty.
\]
\end{mainthm}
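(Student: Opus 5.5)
The plan is to adapt to round patterns the classical argument of Boshernitzan, and its Rauzy-graph form, that bounds the number of ergodic measures of a minimal subshift by linear complexity. For each $n$, let $\mathcal R_n$ denote the set of allowed round patterns of radius $n$ in $L$, so $\#\mathcal R_n=R_L(n)$. Every $\mu\in\SC_G(L)$ gives cylinder weights $\mu(n,\tau)$ for $\tau\in\mathcal R_n$, defined as the $\mu$-mass of the cylinder of leaves whose round pattern at a fixed base vertex is $\tau$. I will use the switch equations from Definition~\ref{def:round-switch-complexity}. These say that the weight of a radius-$(n-1)$ pattern equals the sum of the weights of its radius-$n$ extensions, and the same holds for the corresponding compatibility relations under root change along each generator. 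By the measure/cylinder correspondence for subset currents, the map $\mu\mapsto(\mu(n,\tau))_{n,\tau}$ is injective and linear. Its image consists of nonnegative solutions of the whole switch system, and the linear span of the image has dimension $d(L)$.

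\textbf{Step 1 (dimension bound at each level).} Let $V_n\subseteq\R^{\mathcal R_n}$ be the solution space of the level-$n$ switch equations, meaning the restriction relations from level $n$ down to level $n-1$ together with the root-change compatibilities. Restricting currents to level $n$ gives a linear map $\SC_G(L)\to V_n$. Any $d'$ linearly independent carried currents remain independent at some finite level, and hence at all larger levels. So it suffices to prove $\dim V_n\le 1+E_L(n)$ for every $n$ large enough to separate the chosen currents. To bound $\dim V_n$, I will count equations against unknowns. There are $R_L(n)$ unknowns, and each of the $R_L(n-1)$ radius-$(n-1)$ patterns contributes one restriction relation. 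Minimality of $L$ makes the associated "Rauzy graph" connected, where vertices are $\mathcal R_{n-1}$ and each pattern in $\mathcal R_n$ gives edges joining its sub-patterns. Connectedness should force these relations to have rank at least $R_L(n-1)-1$. The one possible redundancy is the global relation given by total mass, which we normalize away. This yields $\dim V_n\le R_L(n)-R_L(n-1)+1=E_L(n)+1$. Taking $\liminf$ gives the first inequality. The second inequality is elementary: since $R_L(n)=R_L(1)+\sum_{k\le n}E_L(k)$, we have $\liminf E_L(n)\le\liminf R_L(n)/n$.

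\textbf{Step 2 (consequences).} If $d(L)=d<\infty$, choose independent currents $\mu_1,\dots,\mu_d$ and a level $n_0$ at which they are already separated. For all $n\ge n_0$, Step~1 then gives $E_L(n)\ge d-1$, and summing gives $R_L(n)\ge(d-1)n-O(1)$. The claim that there are exactly $d$ ergodic rays comes from Choquet theory. The cone $\SC_G(L)$ has a compact base (Definition~\ref{def:carried-current-cone}), so it is the closed convex hull of its extreme rays. Because the set of measures is a simplex (ergodic decomposition of invariant measures on the root-change groupoid), a finite-dimensional simplex has exactly $d$ vertices. If $d(L)=\infty$, then for every $m$ there are $m$ independent currents, and these give $E_L(n)\ge m-1$ for all large $n$. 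Hence $E_L(n)\to\infty$, and $R_L(n)/n\to\infty$ follows by Cesàro summation. The final implication $R_L(n)=O(n)\Rightarrow d(L)<\infty$ is the contrapositive.

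\textbf{Main obstacle.} I expect the rank count in Step~1 to be the delicate point. Unlike the one-sided $\Z$ case, a radius-$n$ round pattern restricts to several radius-$(n-1)$ patterns, one for each re-rooting. So I must check that the relation system has rank at least $R_L(n-1)-1$, and that the extra root-change equations only reduce dimension further. My first attempt will be to use only a single restriction family, namely "forget the outer sphere at the base vertex," which already gives $R_L(n-1)$ relations. I will then show that these relations involve disjoint sets of unknowns, and this disjointness gives full rank directly, without any connectivity argument. If that works, the bound is even $\dim\le E_L(n)$. The $+1$ then absorbs the case where the root-change constraints are not needed, and I will reconcile the constant against the normalization in Definition~\ref{def:extension-excess}.
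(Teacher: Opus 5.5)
Your outer framework is the paper's, and Step~2 is fine as written. The framework consists of: injectivity of the grade-$n$ weight map on any finite-dimensional subspace of $V(L)$ for large $n$ (descending kernels), a finite-level bound $\dim W_n(L)\le E_L(n)+1$, Ces\`aro averaging, and ergodic decomposition to count ergodic rays.

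The gap is the one nontrivial step, $\operatorname{rank}D_n(L)\ge R_L(n-1)-1$. You only assert it (``connectedness should force\dots''), and the route you propose for it fails. The relations $w_{n-1}(U)=\sum_{\operatorname{par}_n([K])=U}w_n(K)$ are splitting identities that \emph{define} grade-$(n-1)$ weights from grade-$n$ ones. They impose no condition on a vector of $\R^{\mathcal R_n(L)}$. Their disjoint supports only make them independent in $\R^{\mathcal R_{n-1}(L)}\times\R^{\mathcal R_n(L)}$, where they cut $R_L(n)+R_L(n-1)$ unknowns down to exactly $R_L(n)$. Only the genuine switch equations constrain anything: for a semi-round class $J$ centered at the midpoint of $e$, the completions centered at $o(e)$ and at $t(e)$ have equal total weight.

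Your hoped-for bound $\dim\le E_L(n)$ is in fact false. The lamination $L=\{\partial F_N\}$ carries $\delta_{\partial F_N}$, so $d(L)=1$, yet $R_L(n)=1$ and $E_L(n)=0$ for all $n$. So the $+1$ is genuine. It is not a ``total mass normalization'', which is not a linear relation of the switch system. It is the corank of the incidence matrix of a connected graph.

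What is missing is a mechanism that turns switch rows into graph incidence vectors. In the $\Z$ case a $(k+1)$-word is an edge from its prefix to its suffix, and the Kirchhoff constraints are literally the rows of the vertex--edge incidence matrix. Here a grade-$n$ round class appears in one switch row for each edge at its center. So $D_n(L)$ is not an incidence matrix, and connectivity of a graph on $\mathcal R_{n-1}(L)$ says nothing directly about the rank of $D_n(L)$.

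The paper's device is as follows. Fix $0\ne\mu\in\SC(L)$, which has full support by minimality, and compose $D_n(L)^t$ with the weighted parent-summing map $(\mathsf{Par}^\mu_n z)_U=\sum_{\operatorname{par}_n([K])=U}\langle K,\mu\rangle_A z_K$. All completions of $J$ centered at $o(e)$ have parent $\partial_0[J]$, and all those centered at $t(e)$ have parent $\partial_1[J]$. The splitting formula for $J$ then gives $\mathsf{Par}^\mu_nD_n(L)^t\mathbf e_{[J]}=\langle J,\mu\rangle_A(\mathbf e_{\partial_0[J]}-\mathbf e_{\partial_1[J]})$ with $\langle J,\mu\rangle_A>0$. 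These are positively scaled incidence columns of the transition graph $\mathcal G_n(L)$. Its vertices are $\mathcal R_{n-1}(L)$ and its edges are the allowed semi-round classes. Minimality makes this graph connected: follow the edge path in a leaf hull between two centers. Hence $\operatorname{rank}D_n(L)\ge\operatorname{rank}(\mathsf{Par}^\mu_nD_n(L)^t)=R_L(n-1)-1$, which is exactly the input your Step~1 needs.
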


Theorem~\ref{mainthm:complexity} is proved in Section~\ref{sec:switch}.

Section~\ref{sec:switch} also gives an exact inverse-limit refinement: the stable switch sets are the finite-level images of the normalized carried-current base, and their dimensions recover $d(L)$.

The linear conclusion is universally sharp.  In the next statement $\mathcal M_A(L)$ is the normalized carried-current base of Definition~\ref{def:normalized-current-base}.

\begin{mainthm}[Sharpness of the linear threshold]\label{mainthm:sharp-threshold}
Let $\mathcal K$ be a nonempty metrizable Choquet simplex and let $(a_n)_{n\ge1}$ be a sequence of positive real numbers such that
\[
 \frac{n}{a_n}\longrightarrow0.
\]
Then, for the standard basis $A=\{a_0,a_1,b_0,b_1\}$ of $F_4$, there exists an uncountable minimal genuinely branching subset lamination
\[
 L\subseteq\calH(\partial F_4)
\]
such that
\[
 \mathcal M_A(L)\cong_{\rm aff}\mathcal K
 \qquad\text{and}\qquad
 R_L(n)=o(a_n).
\]
In particular, if $\mathcal K$ has infinite-dimensional affine hull, then $d(L)=\infty$ while $R_L$ lies below the prescribed superlinear scale $(a_n)$.  Hence no prescribed superlinear upper scale can replace linear growth as a universal sufficient condition for finite-dimensionality.
\end{mainthm}

Theorem~\ref{mainthm:sharp-threshold} is proved as Theorem~\ref{thm:slow-universality} in Section~\ref{sec:switch}.

Theorem~\ref{thm:marking-invariance} shows that the coarse linear-rescaling growth type of $R_L$, and in particular the condition $R_L(n)=O(n)$, is independent of the free basis and, more generally, of the finite marked graph.  In the geodesic case this result recovers the corresponding growth-invariance result for ordinary algebraic laminations from~\cite[Proposition~5.7 and Corollary~5.8]{Kap26}: when every leaf has exactly two endpoints, its hull in a marked tree is a bi-infinite geodesic, and a radius-$n$ round pattern is just a centered geodesic segment of length $2n$.  Thus, up to the linear reindexing $n\mapsto2n$ and the harmless bounded factor coming from orientation, $R_L^\Gamma(n)$ records the usual laminary-language complexity.  Remark~\ref{rem:outer-space-round} gives the analogous metric marked-graph formulation.

The paper naturally divides into three parts.  Section~2 collects preliminaries.  Sections~3--5 develop the symbolic tree coding and transport the basic $F_4$ construction by malnormal quasiconvex induction; Section~6 proves Choquet universality.  Sections~7--8 study raw and switch complexity, proving Theorems~\ref{mainthm:complexity} and~\ref{mainthm:sharp-threshold} together with the exact inverse-limit dimension formula.  Section~9 proves change-of-marking invariance for the coarse growth type of round complexity.

\section{Preliminaries}\label{sec:prelim}

We fix terminology and recall the needed background.  For subset currents on free groups see Kapovich--Nagnibeda~\cite{KN13} and Kapovich~\cite{Kap17}; for hyperbolic and surface groups see Sasaki~\cite{Sas22}.  For geodesic currents see Bonahon~\cite{Bon88,Bon91}, and for free-group algebraic laminations see Coulbois--Hilion--Lustig~\cite{CHL08}.

\subsection{Subset laminations and convex-hull realizations}\label{subsec:subset-laminations}

\begin{conv}[Hyperbolic-group boundary notation]\label{conv:hyperbolic-boundary}
Throughout Subsections~\ref{subsec:subset-laminations} and~\ref{subsec:subset-currents}, $G$ denotes a non-elementary word-hyperbolic group, $\partial G$ its Gromov boundary, and
\[
 \calH(\partial G)=\{S\subseteq\partial G:S\text{ is closed and }\#S\ge2\},
 \qquad
 \dd G=\{S\in\calH(\partial G):\#S=2\}.
\]
The hyperspace $\calH(\partial G)$ carries the Vietoris topology, equivalently the Hausdorff topology for any compatible metric on $\partial G$.  It is locally compact, separable, and metrizable, and the action of $G$ on $\partial G$ induces a continuous action on it; see~\cite[Theorem~2.2]{Sas22}.
\end{conv}

\begin{defn}[Subset laminations]\label{def:subset-lamination}
\begin{enumerate}[(1)]
\item A \emph{subset algebraic lamination} on $G$, or simply a \emph{subset lamination}, is a closed $G$-invariant subset
\[
 L\subseteq\calH(\partial G).
\]
Its elements are called \emph{leaves}.
\item A subset lamination $L$ is \emph{geodesic} if $L\subseteq\dd G$, \emph{branching} if $L\not\subseteq\dd G$, and \emph{genuinely branching} if
\[
 L\cap\dd G=\varnothing.
\]
A leaf $S\in L$ with $\#S\ge3$ is a \emph{branching leaf}.
\item A nonempty subset lamination $L$ is \emph{minimal} if it has no proper nonempty closed $G$-invariant subset.
\end{enumerate}
\end{defn}

The term ``subset algebraic lamination'' comes from~\cite[Section~10.7]{KN13}.  Geodesic subset laminations are ordinary algebraic laminations viewed as unordered boundary pairs; see~\cite{CHL08} for the ordered-pair, symbolic, and laminary-language formulations.  Nonminimal subset laminations may contain both geodesic and branching leaves.

If $L$ is minimal, then the closed invariant subset $L\cap\dd G$ is either empty or all of $L$.  Hence every minimal branching subset lamination is automatically genuinely branching.

For $F=F(A)$ with Cayley tree $X_A=\Cay(F,A)$, define the \emph{Cayley-tree}, or \emph{convex-hull}, realization by
\[
 \operatorname{Hull}_{X_A}(L)
 :=\{\Hull_{X_A}(S):S\in L\}.
\]
Its geometric leaves are leafless infinite subtrees: two-point leaves give bi-infinite geodesics and branching leaves give branching subtrees.  The boundary--hull correspondence is the equivariant bijection recalled in Subsection~\ref{subsec:free-rooted}.

Similarly, if $G=\pi_1(\Sigma)$ for a closed surface equipped with a hyperbolic metric and we identify $\partial G$ with $\partial_\infty\mathbb H^2$, the \emph{hyperbolic convex-hull realization} is
\[
 \operatorname{Hull}_{\mathbb H^2}(L)
 :=\{\Hull_{\mathbb H^2}(S):S\in L\}.
\]
For $\#S=2$ the geometric leaf is a complete geodesic; for $\#S\ge3$ it is a nondegenerate closed convex subset with ideal boundary $S$ (possibly all of $\mathbb H^2$).  Distinct hulls need not be disjoint, so this family is a geometric realization rather than a classical surface lamination.

\subsection{Subset currents on hyperbolic groups}\label{subsec:subset-currents}

Convention~\ref{conv:hyperbolic-boundary} remains in force.

\begin{defn}[Subset currents and carried-current cones]\label{def:subset-current}\label{def:carried-current-cone}
\begin{enumerate}[(1)]
\item A \emph{subset current} on $G$ is a $G$-invariant locally finite positive Borel measure on $\calH(\partial G)$.  The cone of subset currents, endowed with the weak-* topology, is denoted $\SC(G)$.  Here local finiteness means finiteness on compact subsets.
\item If $L$ is a subset lamination, its \emph{carried-current cone} is
\[
 \SC_G(L):=\{\mu\in\SC(G):\supp(\mu)\subseteq L\}.
\]
We say that $L$ \emph{carries} a current $\mu$ when $\mu\in\SC_G(L)$.  If $\SC_G(L)\ne\{0\}$, put
\[
 V_G(L):=\operatorname{span}_{\R}\SC_G(L),
 \qquad
 d_G(L):=\dim_{\R}V_G(L)\in\N\cup\{\infty\}.
\]
When the ambient group is clear, we use the abbreviations $\SC(L)$, $V(L)$, and $d(L)$.
\item A \emph{compact convex base} of a nonzero cone $\SC_G(L)$ is a compact convex subset of $\SC_G(L)\setminus\{0\}$ meeting every nonzero ray in exactly one point.  A nonzero carried current is \emph{projectively ergodic} if its ray is an extreme ray of $\SC_G(L)$, equivalently if its representative in any compact convex base is an extreme point.  We also call such extreme rays \emph{ergodic rays}.  We say that $L$ \emph{carries a projectively unique nonzero current} if $\SC_G(L)$ is a single nonzero ray.
\end{enumerate}
\end{defn}

\begin{defn}[Support-unique ergodicity]\label{def:SUE}
Let $0\ne\mu\in\SC(G)$.  We say that $\mu$ is \emph{support-uniquely ergodic} if
\[
 \{\nu\in\SC(G):\supp(\nu)\subseteq\supp(\mu)\}
   =\R_{\ge0}\mu.
\]
Equivalently, the subset lamination $\supp(\mu)$ carries a projectively unique nonzero subset current.  We use the qualifier ``support'' to distinguish this property from ordinary ergodicity, which refers to extremality of an invariant measure or carried-current ray.
\end{defn}

See~\cite{KN13} and~\cite[Sections~2--4]{Sas22} for the general theory.  The support of every nonzero current is a nonempty subset lamination; if $L$ is minimal and $0\ne\mu\in\SC_G(L)$, then $\supp(\mu)=L$.  Hence for minimal $L$, carrying a projectively unique nonzero current is equivalent to support-unique ergodicity of every nonzero carried current.

Identify flip-invariant measures on ordered pairs with measures on unordered pairs by taking half the push-forward under the quotient map.  Extension by zero from $\dd G$ to $\calH(\partial G)$ then gives a natural linear embedding
\[
 \GC(G)\hookrightarrow\SC(G).
\]
For the classical surface theory see~\cite{Bon88}, and for geodesic currents on negatively curved groups see~\cite{Bon91}.

Counting currents are the basic atomic examples.  For an infinite quasiconvex $H\le G$, the normalized atomic measure on the $G$-orbit of $\Lambda H$ is locally finite under the standard hypotheses; see~\cite[Theorem~2.8]{Sas22}.  For free groups see~\cite[Definition~4.3 and Lemma~4.4]{KN13}, and for closed surface groups~\cite[Lemma~2.7]{Sas22}.  Thus every nontrivial finitely generated subgroup in either setting defines $\eta_H$.

\begin{defn}[Rational subset currents]\label{def:rational-current}
Let $G$ be a nonabelian free group or a closed hyperbolic surface group.  A subset current $\rho\in\SC(G)$ is \emph{rational} if
\[
 \rho=c\eta_H
\]
for some $c\ge0$ and some nontrivial finitely generated subgroup $H\le G$.
\end{defn}

Rational subset currents are dense in $\SC(F_N)$ by~\cite[Theorem~5.8]{KN13} (see also~\cite[Theorem~5.1]{Kap17}), and in the closed-surface case by~\cite[Theorem~8.21]{Sas22}.

\begin{lem}[Rational currents have countable support]\label{lem:rational-countable}
Let $G$ be a nonabelian finitely generated free group or a closed hyperbolic surface group.  If $0\ne\rho\in\SC(G)$ is rational, then $\supp(\rho)$ is countable.
\end{lem}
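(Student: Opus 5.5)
We use the explicit description of a rational current as an atomic measure. Let $\rho=c\eta_H$ with $c>0$ and $H\le G$ a nontrivial finitely generated subgroup. As recalled in Subsection~\ref{subsec:subset-currents} (see~\cite[Definition~4.3 and Lemma~4.4]{KN13} for free groups and~\cite[Lemma~2.7, Theorem~2.8]{Sas22} for surface groups), $\eta_H$ is a locally finite atomic measure concentrated on the $G$-orbit
\[
 G\cdot\Lambda(H)=\{g\Lambda(H):g\in G\}\subseteq\calH(\partial G).
\]
Since $G$ is finitely generated, hence countable, this orbit is countable. So $\rho$ is a countable sum of positive point masses $\sum_{S\in G\Lambda(H)} m_S\,\delta_S$ with each $m_S>0$.

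The key step is to show that $\supp(\rho)$, the set of points every open neighbourhood of which has positive measure, equals the closure $\overline{G\Lambda(H)}$ in $\calH(\partial G)$. That closure need not be countable in general, so we must show more, namely that the orbit is closed, or at least that its closure is countable.

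The main point is therefore that the orbit $G\Lambda(H)$ is a discrete closed subset of $\calH(\partial G)$. This follows from local finiteness. In both settings $H$ is quasiconvex: finitely generated subgroups of free groups are quasiconvex, and in surface groups the finite-index case gives $\Lambda(H)=\partial G$ while infinite-index f.g.\ subgroups are free and quasiconvex. The standard multiplicities are positive integers, or bounded below by the positive constant $c\cdot\min$ multiplicity; indeed each atom has mass at least $c$ times an integer $\ge1$. Suppose $K\subseteq\calH(\partial G)$ is compact. Then $\rho(K)<\infty$ forces $K$ to contain only finitely many orbit points, since each atom has mass at least some fixed $\epsilon>0$. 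A set meeting every compact set in finitely many points is closed and discrete in the locally compact Hausdorff space $\calH(\partial G)$. Hence $\overline{G\Lambda(H)}=G\Lambda(H)$, so $\supp(\rho)=G\Lambda(H)$ is countable.

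The obstacle to handle carefully is the uniform lower bound on atom masses. Alternatively one can use the direct geometric argument that only finitely many translates $g\Lambda(H)$ have convex hull meeting a given ball while having "size" bounded below, and compact sets of $\calH(\partial G)$ are exactly those bounded away from singletons. If the mass normalization is uncertain, we fall back on that argument. In the free group case, compact sets correspond to cylinder-type sets $\{S:\Hull(S)\ni 1,\ \ldots\}$ with translates of a fixed $\Hull(\Lambda H)$ through a ball. There are finitely many such translates modulo $H$-cosets by quasiconvexity and cocompactness of the $H$-action on its hull.
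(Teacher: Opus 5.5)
Your proof is correct and is essentially the paper's argument. The atoms of $\rho$ form a single countable $G$-orbit, and a uniform positive lower bound on the atom masses together with local finiteness makes that orbit closed and discrete, so it equals $\supp(\rho)$. The lower bound you flagged as the delicate point follows at once from $G$-invariance of $\rho$: all atoms lie in one orbit and so carry a common positive mass, which is exactly how the paper phrases it. Your geometric fallback argument is therefore unnecessary.
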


\begin{proof}
The atoms of a nonzero rational current lie in a countable orbit and have a common positive mass.  Local finiteness makes that orbit closed and discrete in $\calH(\partial G)$, so it equals the support.
\end{proof}

\subsection{Free groups: cylinders, weights, and rooted trees}\label{subsec:free-rooted}

\begin{conv}[Free-group rooted-tree notation]\label{conv:free-rooted}
Throughout Subsection~\ref{subsec:free-rooted}, $F=F(A)$ denotes a finitely generated nonabelian free group with fixed free basis $A$, and
\[
 X=\Cay(F,A)
\]
denotes its standard Cayley tree.  All hulls, cylinders, and rooted-tree spaces in this subsection are taken with respect to this marking unless explicitly stated otherwise.
\end{conv}

For every finite nondegenerate subtree $K\subseteq X$, Kapovich--Nagnibeda define a compact-open \emph{subset cylinder} $\operatorname{SCyl}_A(K)\subseteq\calH(\partial F)$; these cylinders form a basis~\cite[Definition~3.2 and Proposition~3.3]{KN13}.  For a leafless tree $T$, one has $\partial T\in\operatorname{SCyl}_A(K)$ precisely when $K\subseteq T$ and every nonterminal vertex of $K$ has the same incident edges in $K$ and $T$.

\begin{defn}[Finite-subtree weights]\label{def:finite-subtree-weight}
For $\mu\in\SC(F)$ and a finite nondegenerate subtree $K\subseteq X$, the \emph{$K$-weight} of $\mu$ is
\[
 \langle K,\mu\rangle_A:=\mu\bigl(\operatorname{SCyl}_A(K)\bigr).
\]
\end{defn}

Translation invariance makes the $K$-weight depend only on the $F$-translation class of $K$, and the collection of all finite-subtree weights determines $\mu$.  Finite additivity of cylinders gives the Kirchhoff-type splitting identities of~\cite[Proposition~3.11]{KN13}.

For later use, one may organize these weights by \emph{round graphs}: finite subtrees whose terminal vertices all lie at a fixed radius from a chosen center.  Grade-$n$ round weights determine all weights of radius at most $n$ by the splitting formulas, while \emph{semi-round graphs} encode the corresponding switch equations; see~\cite[Sections~3--4]{Kap17}.  Section~\ref{sec:switch} will use the support-restricted versions of this finite-dimensional linear system.

Let $\calT(X)$ denote the space of infinite subtrees $T\subseteq X$ with no vertices of degree one.  Kapovich--Nagnibeda prove that
\[
 S\longmapsto \Hull_X(S)
\]
is an $F$-equivariant bijection from $\calH(\partial F)$ to $\calT(X)$, with inverse $T\mapsto\partial T$; see~\cite[Proposition~4.16]{KN13}.  With the local topology on rooted trees, it restricts to a homeomorphism
\[
 \{S\in\calH(\partial F):1\in\Hull_X(S)\}\longrightarrow\calT_1(X),
\]
since a finite rooted hull ball is determined by which elements of a finite clopen partition of $\partial F$ by boundary shadows meet $S$.

\begin{defn}[Rooted-tree transversal and root-change invariant measures]\label{def:root-change-measures}
Put
\[
 \calT_1(X)=\{T\in\calT(X):1\in T\}.
\]
For $g\in F$ let
\[
 \calT_{1,g}(X)=\{T\in\calT_1(X):g\in T\}
\]
and define the partial root-change homeomorphism
\[
 r_g:\calT_{1,g}(X)\to\calT_{1,g^{-1}}(X),\qquad r_g(T)=g^{-1}T.
\]
The cone of finite positive Borel measures on $\calT_1(X)$ invariant under every partial map $r_g$ is denoted $\calM_1(X)$; its elements are called \emph{root-change invariant measures}.
\end{defn}

The space $\calT_1(X)$ is compact and totally disconnected.  The basic rooted-tree description is:

\begin{thm}[Kapovich--Nagnibeda]\label{thm:KN-rooted}
There is a canonical $\R_{\ge0}$-linear homeomorphism
\[
 t:\calM_1(X)\longrightarrow\SC(F).
\]
For every $\nu\in\calM_1(X)$ and every finite nondegenerate subtree $K_0\subseteq X$ containing $1$, the weight of $K_0$ in $t(\nu)$ is the $\nu$-measure of the corresponding rooted-tree cylinder.
\end{thm}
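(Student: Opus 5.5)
The plan is to build $t$ by ``spreading'' a rooted measure over all roots, and to invert it by restricting a current to the transversal. Put
\[
 U_1:=\{S\in\calH(\partial F):1\in\Hull_X(S)\},
\]
which is the union of finitely many basic cylinders $\operatorname{SCyl}_A(K)$ with $K$ a star at $1$, and is therefore compact-open. The homeomorphism $U_1\to\calT_1(X)$, $S\mapsto\Hull_X(S)$, lets us regard $\nu\in\calM_1(X)$ as a finite measure on $U_1$. Every leaf $S$ has nonempty hull, so the translates $gU_1$ cover $\calH(\partial F)$.

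Define
\[
 t(\nu)(E):=\int_{U_1}\frac{\#\{g\in F: g^{-1}S\in E,\ g\in\Hull_X(S)\}}{\ } \,d\nu(S)
\]
only after fixing the correct normalization, which I would take as follows. First set $\mu(E)=\nu(E)$ for Borel $E\subseteq U_1$. A general Borel $E$ is partitioned into pieces $E_g\subseteq gU_1$, for instance by choosing for each $S$ the ball-lexicographically least $g\in\Hull_X(S)$ in a fixed ordering of $F$. We then put $\mu(E)=\sum_g\nu(g^{-1}E_g)$.

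The substance is independence of the partition. If $E\subseteq gU_1\cap hU_1$, then $\nu(g^{-1}E)=\nu(h^{-1}E)$. This is exactly invariance under the partial root-change map $r_{g^{-1}h}$, transported through the hull homeomorphism via the identity $r_g(\Hull S)=\Hull(g^{-1}S)$. Given independence, the following properties follow directly:
\begin{itemize}
\item[(i)] $\mu$ is countably additive.
\item[(ii)] $\mu$ is $F$-invariant.
\item[(iii)] $\mu$ is locally finite, since a compact set is covered by finitely many $gU_1$, each of finite mass $\nu(\calT_1(X))$.
\item[(iv)] $\mu$ is Borel.
\end{itemize}

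For the inverse, take $\mu\in\SC(F)$ and restrict it to $U_1$. This gives a finite measure because $U_1$ is compact, and root-change invariance follows from $F$-invariance applied to subsets of $U_1\cap gU_1$. The two constructions are mutually inverse by uniqueness of extension from the cover $\{gU_1\}$. Both are visibly additive and positively homogeneous, so $t$ is an $\R_{\ge0}$-linear bijection.

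The weight formula holds because, for $K_0\ni1$, the cylinder $\operatorname{SCyl}_A(K_0)$ lies in $U_1$. Its image under the hull map is the rooted cylinder of trees containing $K_0$ with full stars at the nonterminal vertices. So $\langle K_0,t(\nu)\rangle_A=\nu(\text{that cylinder})$ follows from the remark after \cite[Proposition~3.3]{KN13}.

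Homeomorphism is then a matter of comparing weak-* topologies. Since cylinders form a basis of compact-open sets in a totally disconnected space, weak-* convergence in $\SC(F)$ is equivalent to convergence of all cylinder weights; this uses that indicators of compact-open sets are continuous with compact support and span a dense subspace of $C_c$. By translation invariance we may take $1\in K_0$. The same holds on the compact totally disconnected space $\calT_1(X)$ with rooted cylinders. By the weight formula, $t$ matches these families of numbers, so $t$ and $t^{-1}$ are both continuous.

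I expect the main obstacle to be the well-definedness step in the spreading construction. Measurable choice is not a problem, since $F$ is countable and the sets $gU_1$ are clopen. The delicate point is making sure that partial root-change invariance on the overlaps $U_1\cap g^{-1}U_1$ really forces compatibility. I would handle this by checking the statement on rooted cylinders first and then extending by the $\pi$--$\lambda$ theorem.
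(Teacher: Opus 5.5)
Your argument is correct, but it takes a different route from the paper. The paper does not reprove the statement. It cites \cite[Proposition~5.3]{KN13} and points to that paper's cylinder-set proof, which identifies the two sides through cylinder data: masses of rooted cylinders in $\calT_1(X)$ on one side, weights of the corresponding subset cylinders on the other, with each measure determined by those values. You instead build $t(\nu)$ directly at the Borel level by gluing the translates $g_*\nu$ over the countable clopen cover $\{gU_1\}$ of $\calH(\partial F)$. Your key observation is that compatibility on an overlap $gU_1\cap hU_1$ is literally invariance under $r_{g^{-1}h}$, transported through the hull homeomorphism. You then invert by restricting to the compact-open transversal $U_1$. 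Your route gives a self-contained proof with no extension theorem or weight-realization input. The identity $t(\nu)|_{U_1}=\nu$ (hence the weight formula, and the fact that the inverse is restriction), $F$-invariance, and local finiteness all fall out of the construction. The cylinder approach fits how the paper later uses the theorem, for instance in Theorem~\ref{thm:stable-switch}, where a rooted measure is assembled from round-cylinder data and then turned into a current. Its cylinder-weight description of the weak-$*$ topology is also what your homeomorphism step relies on.

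A few small tidy-ups:
\begin{itemize}
\item Delete the abandoned display with the empty denominator. It plays no role, and as written the count is infinite already for $E=U_1$.
\item The $\pi$--$\lambda$ step you anticipate is unnecessary. Membership in $\calM_1(X)$ already means $\nu(r_k(E))=\nu(E)$ for every Borel $E\subseteq\calT_{1,k}(X)$. Independence of the partition then follows by passing to the common refinement of two admissible partitions and applying the overlap identity piece by piece.
\item In the topology step, add that the masses of a fixed compact-open set containing the support of $f\in C_c$ converge and are therefore bounded. This bound is what lets uniform approximation by step functions extend to all of $C_c$. The argument works equally for nets, so the two topologies genuinely coincide, not just their convergent sequences.
\end{itemize}
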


\begin{proof}
The preceding theorem is Proposition~5.3 of~\cite{KN13}.  We will use both the statement and its cylinder-set proof.
\end{proof}

\begin{defn}[Normalized carried-current base]\label{def:normalized-current-base}
Let $L\subseteq\calH(\partial F)$ be a subset lamination carrying a nonzero subset current.  Define
\[
 \mathcal M_A(L):=
 \{\mu\in\SC_F(L):t^{-1}(\mu)(\calT_1(X))=1\}.
\]
We call $\mathcal M_A(L)$ the \emph{normalized carried-current base} associated to the marking $A$ and the rooted-tree normalization.
\end{defn}

The rooted transversal of $L$ is closed in the compact space $\calT_1(X)$ by the rooted boundary--hull homeomorphism above.  Thus the root-change invariant probability measures supported on it form a compact convex set, so $\mathcal M_A(L)$ is indeed a compact convex base of $\SC_F(L)$.

We record the corresponding support statement.

\begin{defn}[Boundary saturation]\label{def:saturation}
Let $K\subseteq\calT_1(X)$ be a closed root-change-invariant subset.  Define its \emph{boundary saturation}
\[
 \Sat(K)=\left\{S\in\calH(\partial F):
  g^{-1}\Hull_X(S)\in K \text{ for some vertex }g\in\Hull_X(S)\right\}.
\]
Because $K$ is invariant under root change, ``for some vertex'' is equivalent to ``for every vertex''.
\end{defn}

\begin{lem}[Support transfer]\label{lem:support-transfer}
Let $0\ne\nu\in\calM_1(X)$ and put $K=\supp(\nu)$ and $\mu=t(\nu)$.  Then
\[
 \supp(\mu)=\Sat(K).
\]
Moreover, if $\nu'\in\calM_1(X)$ and
\[
 \supp(t(\nu'))\subseteq\supp(t(\nu)),
\]
then
\[
 \supp(\nu')\subseteq K.
\]
\end{lem}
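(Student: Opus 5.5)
The plan is to prove the two assertions separately. Throughout, we use the cylinder description of $t$ from Theorem~\ref{thm:KN-rooted} and the rooted boundary--hull homeomorphism
\[
 \{S:1\in\Hull_X(S)\}\to\calT_1(X),
\]
which we denote $S\mapsto T_S=\Hull_X(S)$.

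\textbf{Step 1: $\supp(\mu)\subseteq\Sat(K)$.} First note that $\Sat(K)$ is closed and $F$-invariant. Invariance follows from the ``for every vertex'' clause: if $g\in\Hull(S)$ witnesses membership for $S$, then $hg$ witnesses it for $hS$. For closedness, write
\[
 \Sat(K)=\bigcup_{g\in F} g\,\Phi^{-1}(K),
\]
where $\Phi^{-1}(K)=\{S:1\in\Hull(S),\,T_S\in K\}$ is compact. This union is locally finite: a compact subset of $\calH(\partial F)$ has hulls meeting a fixed finite ball, so only finitely many translates meet it. Hence $\Sat(K)$ is closed.

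Next, show that $\mu$ gives zero mass to the complement. Take a subset cylinder $\operatorname{SCyl}_A(K_0)$ disjoint from $\Sat(K)$; after translating we may assume $1\in K_0$. By Theorem~\ref{thm:KN-rooted}, its $\mu$-mass equals the $\nu$-mass of the rooted cylinder of trees containing $K_0$ with the prescribed local structure. That rooted cylinder is disjoint from $K$, because any tree in it that lies in $K$ has boundary in $\Sat(K)\cap\operatorname{SCyl}_A(K_0)$. So its mass is $0$. Cylinders form a basis, so the open set $\calH\setminus\Sat(K)$ is a countable union of such cylinders and is $\mu$-null.

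\textbf{Step 2: $\Sat(K)\subseteq\supp(\mu)$.} Let $S\in\Sat(K)$; by invariance assume $1\in\Hull(S)$ and $T_S\in K$. Any neighbourhood of $S$ contains a cylinder $\operatorname{SCyl}_A(K_0)$ with $K_0$ a large ball of $T_S$ around $1$. The corresponding rooted cylinder is an open neighbourhood of $T_S$ in $\calT_1(X)$. Since $T_S\in\supp(\nu)$, this neighbourhood has positive $\nu$-mass, hence the cylinder has positive $\mu$-mass. We need to match the two neighbourhood bases; this is exactly the rooted homeomorphism together with the cylinder formula.

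\textbf{Step 3: the second assertion.} This follows from the first. By the first part,
\[
 \supp(t(\nu'))=\Sat(\supp\nu')\subseteq\Sat(K).
\]
If $T\in\supp(\nu')$, then $\partial T\in\Sat(K)$ with $1\in T$. The ``for every vertex'' clause, which uses root-change invariance of $K$, then gives $T=1^{-1}T\in K$.

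\textbf{Main obstacle.} The main subtlety is that $K=\supp(\nu)$ must itself be closed and root-change invariant, so that $\Sat(K)$ is well defined with the ``every vertex'' equivalence. Closedness is automatic. For invariance, each $r_g$ is a homeomorphism between open sets that preserves $\nu$, so it maps $\supp\nu\cap\calT_{1,g}$ onto $\supp\nu\cap\calT_{1,g^{-1}}$. I would verify this first, since everything else rests on it.
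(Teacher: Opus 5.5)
Your Steps~2 and~3, and your argument that $K=\supp(\nu)$ is root-change invariant, are correct and match the paper. Step~1, however, rests on a false claim.

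The family $\{g\Phi^{-1}(K)\}_{g\in F}$ is not locally finite. If $S\in\Sat(K)$, then $S\in g\Phi^{-1}(K)$ for \emph{every} vertex $g$ of the infinite tree $\Hull_X(S)$. So even the compact set $\{S\}$ meets infinitely many translates. Knowing that the hulls of a compact family all meet a fixed ball does not confine the relevant $g$ to that ball, because $g$ ranges over all vertices of those hulls. Step~1 needs $\Sat(K)$ to be closed so that its complement is a countable union of $\mu$-null cylinders. As written, the inclusion $\supp(\mu)\subseteq\Sat(K)$ is therefore not established.

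The repair is short, and there are two ways to do it.
\begin{enumerate}
\item \emph{Prove closedness correctly.} By the every-vertex clause, which your invariance argument justifies, $\Sat(K)\cap\{S:g\in\Hull_X(S)\}=g\Phi^{-1}(K)$. This set is compact. The open sets $\{S:g\in\Hull_X(S)\}$ cover $\calH(\partial F)$, so $\Sat(K)$ is closed.
\item \emph{Avoid closedness, as the paper does, with a pointwise argument.} Let $S\notin\Sat(K)$ and pick any vertex $g\in\Hull_X(S)$. Then $Y=g^{-1}\Hull_X(S)\notin K$. Since $K$ is closed in $\calT_1(X)$, there is a rooted cylinder $C\ni Y$ with $C\cap K=\varnothing$, so $\nu(C)=0$. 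The $g$-translate of the corresponding subset cylinder is then a $\mu$-null open neighbourhood of $S$.
\end{enumerate}
The second route proves $\supp(\mu)\subseteq\Sat(K)$ directly, and closedness of $\Sat(K)$ follows as a by-product. It also shows that the main equality does not need root-change invariance of $K$; only the second assertion of the lemma does.
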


\begin{proof}
The support $K$ is root-change invariant.  Indeed, if $T\in K$ and $g\in T$, then $\calT_{1,g}(X)$ is an open neighborhood of $T$ in its domain and $r_g$ is a homeomorphism there.  Root-change invariance of $\nu$ implies that every neighborhood of $r_g(T)$ has positive measure.

For a rooted-tree cylinder $C$ and its corresponding subset cylinder $U$, Theorem~\ref{thm:KN-rooted} gives
\[
 t(\nu)(U)=\nu(C).
\]
These cylinders give the corresponding neighborhood bases.  If $S\in\Sat(K)$, translate so that
\[
 Y:=\Hull_X(S)\in K\subseteq\calT_1(X).
\]
Since $Y\in\supp(\nu)$, every rooted cylinder containing $Y$ has positive $\nu$-measure, hence every corresponding subset cylinder has positive $\mu$-measure.  Thus $S\in\supp(\mu)$.

Conversely, suppose $S\notin\Sat(K)$.  Choose a vertex $g\in\Hull_X(S)$ and put
\[
 Y=g^{-1}\Hull_X(S)\in\calT_1(X).
\]
Then $Y\notin K$.  Since $K$ is closed and rooted-tree cylinders form a clopen basis, there is a rooted cylinder $C$ with $Y\in C$ and $C\cap K=\varnothing$.  Let $U$ be the corresponding subset cylinder.  Then
\[
 t(\nu)(U)=\nu(C)=0.
\]
Thus $U$ is disjoint from $\supp(\mu)$, and after translating back by $g$ we obtain an open neighborhood of $S$ of $\mu$-measure zero.  Therefore $S\notin\supp(\mu)$.

The final assertion is immediate if $\nu'=0$.  Otherwise let $K'=\supp(\nu')$; the first part gives
\[
 \supp(t(\nu'))=\Sat(K').
\]
If $Y\in K'$, then $\partial Y\in\Sat(K')\subseteq\Sat(K)$.  Since $1$ is a vertex of $Y=\Hull_X(\partial Y)$, Definition~\ref{def:saturation} implies $Y\in K$.  Thus $K'\subseteq K$.
\end{proof}

\begin{cor}\label{cor:root-to-current-UE}
Let $K\subseteq\calT_1(X)$ be a closed root-change-invariant set that carries, up to scalar multiple, a unique nonzero finite root-change-invariant measure $\nu$.  Assume that $\supp(\nu)=K$.  Then $\mu=t(\nu)$ is support-uniquely ergodic.
\end{cor}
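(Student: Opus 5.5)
We must show: if $\mu'\in\SC(F)$ has $\supp(\mu')\subseteq\supp(\mu)$, then $\mu'$ is a nonnegative multiple of $\mu$. Since $\nu\ne0$ and $t$ is injective and linear (Theorem~\ref{thm:KN-rooted}), we have $\mu\ne0$, and $\R_{\ge0}\mu\subseteq\{\mu':\supp(\mu')\subseteq\supp(\mu)\}$ is clear. So only the reverse inclusion needs an argument.

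Let $\mu'$ be a subset current with $\supp(\mu')\subseteq\supp(\mu)$. By surjectivity of $t$ in Theorem~\ref{thm:KN-rooted}, write $\mu'=t(\nu')$ with $\nu'\in\calM_1(X)$. The second assertion of Lemma~\ref{lem:support-transfer}, applied to $\nu$ and $\nu'$, gives $\supp(\nu')\subseteq\supp(\nu)=K$. Hence $\nu'$ is a finite root-change-invariant measure carried by $K$.

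By the uniqueness hypothesis on $K$, either $\nu'=0$ or $\nu'=c\nu$ for some $c>0$. In both cases $\nu'=c\nu$ with $c\ge0$. Applying the $\R_{\ge0}$-linearity of $t$ then gives $\mu'=c\,t(\nu)=c\mu$, which is the required inclusion.

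No step is a real obstacle; the key input is the support-comparison clause of Lemma~\ref{lem:support-transfer}. The hypothesis $\supp(\nu)=K$ is used only to make that clause land inside $K$ itself, so that the uniqueness assumption on $K$ applies. One should also note that "carries" here means supported in $K$, which matches the conclusion $\supp(\nu')\subseteq K$.
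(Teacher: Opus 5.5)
Your proposal is correct and matches the paper's proof essentially step for step. You write $\mu'=t(\nu')$, use the second clause of Lemma~\ref{lem:support-transfer} to get $\supp(\nu')\subseteq K$, and conclude $\nu'=c\nu$ by the uniqueness hypothesis, which gives $\mu'=c\mu$; you also explicitly check the trivial inclusion and that $\mu\ne0$.
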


\begin{proof}
Suppose $\rho\in\SC(F)$ and $\supp(\rho)\subseteq\supp(\mu)$.  Write $\rho=t(\nu')$ using Theorem~\ref{thm:KN-rooted}.  Lemma~\ref{lem:support-transfer} gives $\supp(\nu')\subseteq K$.  By the assumed uniqueness on $K$, $\nu'=c\nu$ for some $c\ge0$, and therefore $\rho=c\mu$.
\end{proof}

\subsection{Invariant measures and Choquet simplices}\label{subsec:choquet}

If a countable group $\Gamma$ acts continuously on a compact metrizable $Y$, then, whenever invariant probability measures exist, $\mathcal M_\Gamma(Y)$ is a compact metrizable convex set whose extreme points are the ergodic measures; ergodic decomposition makes it a Choquet simplex.  See Phelps~\cite{Phe01} and~\cite[Theorem~3.3]{KM04}.

\begin{defn}[Choquet simplex]\label{def:choquet-simplex}
A compact convex metrizable set $\mathcal K$ is a \emph{Choquet simplex} if every point of $\mathcal K$ has a unique representing probability measure on the extreme boundary in the sense of Choquet theory.
\end{defn}

A finite-dimensional Choquet simplex is an ordinary finite simplex.  Unique ergodicity means that $\mathcal M_\Gamma(Y)$ is a singleton, and minimality forces every invariant probability measure to have full support.

We use standard Toeplitz terminology~\cite{Dow91,Que10,CCG24}: a two-sided sequence is Toeplitz if every coordinate is periodic with some nonzero period, and its orbit closure is a Toeplitz subshift.

A central external input for Section~\ref{sec:universality} is Downarowicz's realization theorem~\cite[Theorem~1]{Dow91}: every nonempty metrizable Choquet simplex is affinely homeomorphic to the invariant-measure simplex of a minimal binary Toeplitz $\mathbb Z$-subshift.  Later we also use low-complexity realization results for such simplices and standard ergodic-decomposition facts to identify extreme points with projectively ergodic carried currents.

\subsection{Quasiconvex subgroups and induction}\label{subsec:prelim-induction}

For an infinite quasiconvex $H\le G$, inclusion extends to an $H$-equivariant embedding $\partial H\hookrightarrow\partial G$ with image $\Lambda H$.  We use the standard limit-set intersection property~\cite[Theorem~1.1]{Yan12} and quasiconvexity of the corresponding cosets; see~\cite{BH99}.

Sasaki~\cite[Theorem~4.3]{Sas22} constructs for such an inclusion a continuous linear induction map
\[
 \iota_H^G:\SC(H)\longrightarrow\SC(G),
\]
obtained by summing translates over $G/H$.  For malnormal $H$, distinct translated limit sets are disjoint, allowing restriction to recover the $H$-current; Section~5 gives the precise statement.  Kielak--Linton~\cite[Theorem~1.11]{KL24} provide malnormal quasiconvex free subgroups of any prescribed finite rank.

\section{Encoding an \texorpdfstring{$F_2$}{F2}-subshift by branching trees}

\subsection{The symbolic system}

\begin{conv}[Tree-coding setup]\label{conv:tree-coding}
Throughout Section~3,
\[
 B=F(u,v),\qquad F=F(a_0,a_1,b_0,b_1),
\]
with standard Cayley trees
\[
 X_B=\Cay(B,\{u,v\}),\qquad
 X=\Cay(F,\{a_0,a_1,b_0,b_1\}).
\]
We use the right shift on $\{0,1\}^{B}$ given by
\begin{equation}
 (x\cdot w)(z)=x(wz),\qquad x\in\{0,1\}^{B},\quad w,z\in B,
 \label{eq:right-shift}
\end{equation}
so $(x\cdot w_1)\cdot w_2=x\cdot(w_1w_2)$.  Unless a statement explicitly imposes stronger hypotheses, $\Omega\subseteq\{0,1\}^{B}$ denotes a nonempty closed $B$-invariant subshift.  The objects $\kappa_x,T_x,\Phi$, and $K_\Omega$ are defined below from the preceding data.
\end{conv}

If $\Omega$ is minimal, every $B$-invariant probability measure $\lambda$ has full support:
\begin{equation}
 \supp(\lambda)=\Omega.
 \label{eq:lambda-fullsupport}
\end{equation}
If $\Omega$ is infinite and minimal, then it has no isolated points and hence is a Cantor space.

\subsection{The tree coding}

Convention~\ref{conv:tree-coding} remains in force.  For each $x\in\Omega$, define a graph map
\[
 \kappa_x:X_B\longrightarrow X
\]
rooted at $1$ as follows.  Set $\kappa_x(1)=1$.  For every $w\in B$, map the positively oriented $u$-edge
\[
 w\longrightarrow wu
\]
to the edge based at $\kappa_x(w)$ labelled $a_{x(w)}$, and map the positively oriented $v$-edge
\[
 w\longrightarrow wv
\]
to the edge based at $\kappa_x(w)$ labelled $b_{x(w)}$.  Since $X_B$ is a tree, these prescriptions determine $\kappa_x$ uniquely.

\begin{lem}\label{lem:embedding}
For every $x\in\Omega$, the map $\kappa_x:X_B\to X$ is an embedding.  Its image
\[
 T_x:=\kappa_x(X_B)
\]
is an exactly $4$-regular subtree of $X$ containing $1$.
\end{lem}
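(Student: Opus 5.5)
The plan is to check that $\kappa_x$ is locally injective and then use the fact that a locally injective graph map between trees preserves reduced paths, hence is injective. First I will describe the four edges of $X_B$ at a vertex $w\in B$ and their images:
\begin{itemize}
\item the outgoing $u$-edge $w\to wu$ goes to the edge at $\kappa_x(w)$ labelled $a_{x(w)}$;
\item the outgoing $v$-edge $w\to wv$ goes to the edge labelled $b_{x(w)}$;
\item the incoming $u$-edge $wu^{-1}\to w$, read at $w$, goes to the edge at $\kappa_x(w)$ labelled $a_{x(wu^{-1})}^{-1}$;
\item the incoming $v$-edge $wv^{-1}\to w$ goes to the edge labelled $b_{x(wv^{-1})}^{-1}$.
\end{itemize}
The two incoming cases hold because $\kappa_x(w)=\kappa_x(wu^{-1})a_{x(wu^{-1})}$, and similarly for $v$. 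This is just the inductive definition along the tree $X_B$. The four labels $a_{x(w)}$, $b_{x(w)}$, $a_{x(wu^{-1})}^{-1}$ and $b_{x(wv^{-1})}^{-1}$ are pairwise distinct elements of $A^{\pm1}$. Two of them are positive letters and two are inverse letters, and within each sign class one is an $a$-letter and the other a $b$-letter. So the four directions at $\kappa_x(w)$ are distinct, and $\kappa_x$ is locally injective at every vertex.

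Next, take a reduced edge path $w_0,w_1,\dots,w_m$ in $X_B$. Its image in $X$ is an edge path in which consecutive edges never backtrack, because at each interior vertex $w_i$ the two directions used are distinct and are mapped to distinct directions. A non-backtracking edge path in the tree $X$ is a geodesic. So for $m\ge1$ the endpoints satisfy $\kappa_x(w_0)\ne\kappa_x(w_m)$, which gives injectivity on vertices. Injectivity on edges follows, since edges are determined by their endpoints. Thus $\kappa_x$ is an embedding of graphs, and $T_x$ is a subtree (the image of a connected graph) that contains $\kappa_x(1)=1$.

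Finally, I will check exact $4$-regularity. The image of the four edges at $w$ gives degree at least $4$ at $\kappa_x(w)$ inside $T_x$. Any edge of $T_x$ at $\kappa_x(w)$ is the image of some edge of $X_B$ incident to some vertex $w'$ with $\kappa_x(w')=\kappa_x(w)$. By injectivity $w'=w$, so the degree is exactly $4$.

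The only point needing care is the incoming-edge label computation, which depends on the direction conventions. The rest is the standard argument that a local injection between trees is an injection.
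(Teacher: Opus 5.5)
Your proof is correct and follows the paper's argument. The paper also computes the four labels $a_{x(w)}$, $b_{x(w)}$, $a_{x(wu^{-1})}^{-1}$, $b_{x(wv^{-1})}^{-1}$ at $\kappa_x(w)$, notes that they are distinct, and deduces injectivity from local injectivity between trees and exact $4$-regularity from that of $X_B$; you simply spell out the non-backtracking-path and degree-count steps that the paper leaves implicit.
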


\begin{proof}
At a vertex $w\in B$, the four oriented incident edges in $X_B$ are sent to edges of $X$ with labels
\begin{equation}
 a_{x(w)},\qquad b_{x(w)},\qquad
 a_{x(wu^{-1})}^{-1},\qquad b_{x(wv^{-1})}^{-1}.
 \label{eq:four-labels}
\end{equation}
The four labels in~\eqref{eq:four-labels} are distinct elements of the oriented generating alphabet
\[
 \{a_0^{\pm1},a_1^{\pm1},b_0^{\pm1},b_1^{\pm1}\}.
\]
Hence $\kappa_x$ is locally injective, and therefore injective because both graphs are trees.  Since $X_B$ is $4$-regular, so is $T_x$.
\end{proof}

In particular $T_x\in\calT_1(X)$.  Define
\[
 \Phi:\Omega\to\calT_1(X),\qquad \Phi(x)=T_x,
\]
and put
\[
 K_\Omega:=\Phi(\Omega).
\]

\begin{lem}\label{lem:Phi-homeo}
The map $\Phi$ is a homeomorphism from $\Omega$ onto the compact set $K_\Omega$.
\end{lem}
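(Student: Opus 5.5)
The plan is to show three things: $\Phi$ is continuous, $\Phi$ is injective, and $\Omega$ is compact. Since $\calT_1(X)$ is Hausdorff (indeed metrizable, carrying the local topology on rooted trees), a continuous injection from a compact space is a homeomorphism onto its image. That image $K_\Omega=\Phi(\Omega)$ is then compact.

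\emph{Continuity.} The key observation is a locality property of the coding. For $n\ge1$, the rooted ball $T_x\cap B_X(1,n)$ equals $\kappa_x(B_{X_B}(1,n))$, because $\kappa_x$ is an isometric embedding of trees sending $1$ to $1$ (Lemma~\ref{lem:embedding}). This ball is determined by the finitely many values $x(w)$ with $|w|\le n$. Indeed, by induction along geodesics from $1$, $\kappa_x(w)$ for $|w|\le n$ depends only on the values $x(w')$ at proper prefixes $w'$ of $w$ together with the last letter. So if $x,y\in\Omega$ agree on the ball of radius $n$ in $B$, then $T_x$ and $T_y$ agree on the ball of radius $n$ in $X$. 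Basic neighborhoods in $\calT_1(X)$ are exactly the sets of trees with a prescribed $n$-ball, and basic neighborhoods in the product topology on $\{0,1\}^B$ fix finitely many coordinates. Hence $\Phi$ is continuous.

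\emph{Injectivity.} We recover $x$ from $T_x$. The tree $T_x$ is exactly $4$-regular, and at the vertex $\kappa_x(w)$ exactly one outgoing edge carries a positive label of the form $a_i$. This uses~\eqref{eq:four-labels}: the incident labels are $a_{x(w)}$, $b_{x(w)}$, $a^{-1}$, $b^{-1}$, so only one label is a positive $a$-letter, and its index is $x(w)$.

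From this we reconstruct $\kappa_x$ inductively from $T_x$ alone:
\begin{itemize}
\item start with $\kappa_x(1)=1$;
\item $\kappa_x(wu)$ is the endpoint of the unique edge at $\kappa_x(w)$ in $T_x$ labelled $a_0$ or $a_1$;
\item $\kappa_x(wv)$ is the endpoint of the unique edge labelled $b_0$ or $b_1$;
\item inverse letters are handled analogously via the unique $a_j^{-1}$ and $b_j^{-1}$ edges.
\end{itemize}
Reading off the index gives $x(w)$ for every $w$. So $T_x=T_y$ forces $x=y$.

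\emph{Compactness.} $\Omega$ is closed in the compact space $\{0,1\}^B$, so it is compact. No real obstacle is expected. The only point requiring care is the inverse-edge bookkeeping in the reconstruction, namely that the incoming $u$-edge at $\kappa_x(w)$ carries label $a_{x(wu^{-1})}^{-1}$ and so is distinguishable from the outgoing one. This is already guaranteed by the distinctness of the four labels in~\eqref{eq:four-labels}.
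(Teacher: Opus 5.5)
Your proof is correct and follows the paper's argument essentially step for step. Continuity comes from locality of the coding, injectivity from the recursive reconstruction of $\kappa_x$ and $x$ via the unique positive $a$- and $b$-edges (and the negative ones for $u^{-1},v^{-1}$), and the homeomorphism from compact-to-Hausdorff. One harmless slip: when $w$ ends in $u^{-1}$ or $v^{-1}$, $\kappa_x(w)$ depends on $x(w)$ itself (e.g.\ $\kappa_x(w'u^{-1})=\kappa_x(w')\,a_{x(w'u^{-1})}^{-1}$), not only on proper prefixes; since $|w|\le n$, the radius-$n$ ball of $T_x$ is still determined by $x$ on the radius-$n$ ball of $B$.
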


\begin{proof}
The radius-$R$ ball in $T_x$ depends on finitely many coordinates of $x$, so $\Phi$ is continuous.

It is also injective.  From the rooted labelled subtree $T_x$ one reconstructs $x$ and the embedded copy of $X_B$ recursively.  At the root, exactly one of the positive $a$-edges $a_0,a_1$ is present, and its index is $x(1)$; similarly there is exactly one positive $b$-edge.  At any already reconstructed vertex $\kappa_x(w)$, the unique positive $a$-edge identifies the $u$-neighbor and records $x(w)$, the unique positive $b$-edge identifies the $v$-neighbor, and the corresponding negative $a$- and $b$-edges identify the $u^{-1}$- and $v^{-1}$-neighbors.  Induction over word length recovers $x(w)$ for every $w\in B$.  Hence $T_x=T_y$ implies $x=y$.

Compactness of $\Omega$ and Hausdorffness of $\calT_1(X)$ now make $\Phi$ a homeomorphism onto the compact set $K_\Omega$.
\end{proof}

The next identity is the core of the construction.

\begin{lem}[Root change equals shift]\label{lem:root-shift}
For all $x\in\Omega$ and $w\in B$,
\begin{equation}
 \kappa_x(w)^{-1}T_x=T_{x\cdot w}.
 \label{eq:root-shift}
\end{equation}
More precisely, for every $z\in B$,
\begin{equation}
 \kappa_{x\cdot w}(z)=\kappa_x(w)^{-1}\kappa_x(wz).
 \label{eq:kappa-cocycle}
\end{equation}
\end{lem}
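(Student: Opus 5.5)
We prove the pointwise cocycle identity~\eqref{eq:kappa-cocycle} first; the tree identity~\eqref{eq:root-shift} then follows by taking images over all $z\in B$.

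Fix $x$ and $w$, and define a map $\psi:X_B\to X$ on vertices by $\psi(z)=\kappa_x(w)^{-1}\kappa_x(wz)$. The plan is to show that $\psi$ satisfies the defining prescriptions of $\kappa_{x\cdot w}$. Uniqueness of the graph map determined by those prescriptions, already noted in the construction since $X_B$ is a tree, then gives $\psi=\kappa_{x\cdot w}$.

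\textbf{Root.} We have $\psi(1)=\kappa_x(w)^{-1}\kappa_x(w)=1$.

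\textbf{Edges.} Consider a positive $u$-edge $z\to zu$ of $X_B$. Left multiplication by $w$ is a label-preserving graph automorphism of $X_B$, so it sends this edge to the $u$-edge $wz\to wzu$. By the definition of $\kappa_x$, that edge is mapped to the edge from $\kappa_x(wz)$ labelled $a_{x(wz)}$; thus $\kappa_x(wzu)=\kappa_x(wz)\,a_{x(wz)}$. Left multiplication by $\kappa_x(w)^{-1}$ is a label-preserving automorphism of the Cayley tree $X$. Hence
\[
 \psi(zu)=\psi(z)\,a_{x(wz)}=\psi(z)\,a_{(x\cdot w)(z)},
\]
where the last equality is the right-shift formula~\eqref{eq:right-shift}. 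So $\psi$ sends the edge $z\to zu$ to the edge at $\psi(z)$ labelled $a_{(x\cdot w)(z)}$, which is exactly the prescription for $\kappa_{x\cdot w}$. The $v$-edges are handled identically, with $b$ in place of $a$.

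\textbf{Uniqueness.} Both $\psi$ and $\kappa_{x\cdot w}$ are graph maps rooted at $1$ that obey the same edge prescriptions. Inducting on word length in $B$ along the unique path from $1$ to $z$, traversing edges in either orientation, shows $\psi(z)=\kappa_{x\cdot w}(z)$ for every $z$. This proves~\eqref{eq:kappa-cocycle}.

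\textbf{Tree identity.} Since $z\mapsto wz$ is a bijection of $B$, the vertex set of $\kappa_x(w)^{-1}T_x$ is $\{\kappa_x(w)^{-1}\kappa_x(wz):z\in B\}$, and the same holds for edges. This set equals $\kappa_{x\cdot w}(X_B)=T_{x\cdot w}$, which gives~\eqref{eq:root-shift}.

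The only delicate point is getting the side conventions right. The right shift $(x\cdot w)(z)=x(wz)$ is exactly what makes the label at $\psi(z)$ come out as $x(wz)$. Apart from that, the argument is routine.
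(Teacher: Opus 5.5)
Your proof is correct and follows the paper's argument. You define $z\mapsto\kappa_x(w)^{-1}\kappa_x(wz)$, check that it fixes the root and sends the positive $u$- and $v$-edges at $z$ to edges labelled $a_{(x\cdot w)(z)}$ and $b_{(x\cdot w)(z)}$, invoke uniqueness of the rooted labelled graph map, and then take images.
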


\begin{proof}
Consider the map
\[
 z\longmapsto \kappa_x(w)^{-1}\kappa_x(wz)
\]
from $X_B$ to $X$.  It sends $1$ to $1$.  At a vertex $z$, the positively oriented $u$-edge is labelled
\[
 a_{x(wz)}=a_{(x\cdot w)(z)},
\]
and the positively oriented $v$-edge is labelled
\[
 b_{x(wz)}=b_{(x\cdot w)(z)}.
\]
By uniqueness of the rooted graph map with the prescribed labels, the displayed map is exactly $\kappa_{x\cdot w}$.  Hence~\eqref{eq:kappa-cocycle} holds, and taking images gives~\eqref{eq:root-shift}.
\end{proof}

$K_\Omega$ is therefore root-change invariant.

\subsection{Invariant measures: the groupoid correspondence}

The root-change identity also identifies the invariant-measure theories.

\begin{prop}\label{prop:measure-correspondence}
The push-forward map
\[
 \Phi_*:\{\text{finite $B$-invariant Borel measures on $\Omega$}\}
 \longrightarrow
 \{\nu\in\calM_1(X):\supp(\nu)\subseteq K_\Omega\}
\]
is an $\R_{\ge0}$-linear bijection.
\end{prop}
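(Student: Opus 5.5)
Since $\Phi:\Omega\to K_\Omega$ is a homeomorphism (Lemma~\ref{lem:Phi-homeo}) and $K_\Omega$ is closed in $\calT_1(X)$, push-forward is an $\R_{\ge0}$-linear bijection between finite Borel measures on $\Omega$ and finite Borel measures on $\calT_1(X)$ supported on $K_\Omega$. The inverse is $\nu\mapsto(\Phi^{-1})_*(\nu|_{K_\Omega})$. So the plan reduces to one equivalence: for $\lambda$ on $\Omega$, $\lambda$ is $B$-invariant if and only if $\Phi_*\lambda$ is invariant under every partial root change $r_g$.

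The key input is Lemma~\ref{lem:root-shift}. For $x\in\Omega$ the vertices of $T_x$ are exactly the $\kappa_x(w)$, $w\in B$, and $r_{\kappa_x(w)}(T_x)=T_{x\cdot w}$. For $g\in F$ and $w\in B$, put
\[
 \Omega_{g,w}=\{x\in\Omega:\kappa_x(w)=g\}.
\]
This set is clopen, since $\kappa_x(w)$ depends only on the finitely many coordinates $x(w')$ with $w'$ a proper prefix of $w$. The sets $\Omega_{g,w}$, $w\in B$, partition $\Phi^{-1}(\calT_{1,g}(X))$, because $\kappa_x$ is injective. On $\Phi(\Omega_{g,w})$, the map $r_g$ is conjugated by $\Phi$ to the shift $x\mapsto x\cdot w$. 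Moreover the shift maps $\Omega_{g,w}$ bijectively onto $\Omega_{g^{-1},w^{-1}}$: by \eqref{eq:kappa-cocycle} with $z=w^{-1}$, $\kappa_{x\cdot w}(w^{-1})=\kappa_x(w)^{-1}=g^{-1}$, and the inverse map is $y\mapsto y\cdot w^{-1}$.

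\emph{Invariant $\lambda$ gives root-change invariant $\Phi_*\lambda$.} Take a Borel set $E\subseteq\calT_{1,g}(X)\cap K_\Omega$ and decompose $\Phi^{-1}(E)$ along the countable partition $\{\Omega_{g,w}\}_{w\in B}$. On each piece, $\Phi^{-1}(r_g(E))$ meets $\Omega_{g^{-1},w^{-1}}$ in exactly $(\Phi^{-1}(E)\cap\Omega_{g,w})\cdot w$, which has the same $\lambda$-measure by shift invariance. Summing over $w$ gives $\Phi_*\lambda(r_g E)=\Phi_*\lambda(E)$. The part of $E$ outside $K_\Omega$ is null on both sides, because $K_\Omega$ is root-change invariant.

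\emph{Root-change invariant $\nu$ gives $B$-invariant $\lambda=(\Phi^{-1})_*\nu$.} Fix a generator $s\in\{u,v\}$ and a Borel $D\subseteq\Omega$. The element $\kappa_x(s)$ takes only two values, namely $a_0,a_1$ (respectively $b_0,b_1$), so $\Omega$ is the disjoint union of the two clopen sets $\Omega_{g,s}$. On each of them, $x\mapsto x\cdot s$ corresponds to $r_g$, so $\lambda(D\cdot s)=\lambda(D)$, piecewise. This works because the shift by $s$ is a homeomorphism of $\Omega$ and the images of the two pieces are disjoint. Invariance under generators gives invariance under all of $B$.

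The main obstacle is bookkeeping in the decomposition: checking measurability and disjointness of the $\Omega_{g,w}$, and checking that $\Phi^{-1}(\calT_{1,g}(X))$ is exactly their union. The latter holds because every vertex of $T_x$ is of the form $\kappa_x(w)$ for a unique $w$. Once this countable clopen partition is in place, both directions are direct pointwise consequences of \eqref{eq:root-shift}.
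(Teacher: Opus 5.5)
Your proof is correct and follows essentially the paper's route: decompose the domain of $r_g$ into the clopen pieces $\{x:\kappa_x(w)=g\}$, on which $r_g$ is conjugate to the shift by $w$, and for the converse use $\kappa_x(s)\in\{a_0,a_1\}$ or $\{b_0,b_1\}$ for generators $s$, where the paper argues with finitely many values of $\kappa_x(w)$ for each $w$ (the paper also notes that only the finitely many $w$ with $|w|_{\{u,v\}}=|g|_A$ occur, but your countable partition suffices). One small slip: $\kappa_x(w)$ can depend on $x(w)$ itself when $w$ ends in $u^{-1}$ or $v^{-1}$ (e.g.\ $\kappa_x(u^{-1})=a_{x(u^{-1})}^{-1}$), so ``proper prefix'' should read ``prefix''; clopenness is unaffected.
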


\begin{proof}
For $x\in\Omega$ and $w\in B$, every vertex of $T_x$ has the form $\kappa_x(w)$ for a unique $w$.  Hence
\[
 (x,w)\longmapsto (T_x,\kappa_x(w))
\]
identifies the transformation groupoid of the $B$-action on $\Omega$ with the root-change groupoid restricted to $K_\Omega$, by Lemma~\ref{lem:root-shift}.  Therefore push-forward by $\Phi$ identifies their finite invariant measures.

For fixed $g\in F$, the domain $\{x:g\in T_x\}$ splits into finitely many clopen pieces on which $g=\kappa_x(w)$ for fixed $w$ and $r_g$ corresponds to $x\mapsto x\cdot w$.  Indeed, $\kappa_x$ is an edge-preserving rooted embedding, so $|\kappa_x(w)|_A=|w|_{\{u,v\}}$; hence $g=\kappa_x(w)$ forces $|w|_{\{u,v\}}=|g|_A$, leaving only finitely many possible $w$.  For each such $w$, the condition $g=\kappa_x(w)$ depends on only finitely many coordinates of $x$ along the geodesic from $1$ to $w$, and is therefore clopen.  Conversely, for fixed $w$, the value of $\kappa_x(w)$ is determined by finitely many binary choices and hence takes only finitely many values.  Thus $B$-shift invariance is equivalent to invariance under all partial root changes, and $\Phi$ gives the asserted linear bijection.
\end{proof}

For the uniquely ergodic specialization, suppose in addition that $\Omega$ is infinite and minimal and has a unique $B$-invariant probability measure $\lambda$.

\begin{cor}\label{cor:K-unique}
Suppose that $\Omega$ is infinite, minimal, and uniquely ergodic, and let $\lambda$ be its unique $B$-invariant probability measure.  Then the probability measure
\[
 \nu_\Omega:=\Phi_*\lambda
\]
is root-change invariant, has support $K_\Omega$, and is, up to nonnegative scalar multiple, the unique finite root-change-invariant measure supported on $K_\Omega$.
\end{cor}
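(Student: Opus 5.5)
The corollary follows by feeding the unique ergodicity of $\Omega$ through the linear bijection of Proposition~\ref{prop:measure-correspondence}. I will check its three assertions in turn: invariance, support, and uniqueness up to scalar.

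\emph{Root-change invariance.} The measure $\lambda$ is a finite $B$-invariant Borel measure on $\Omega$. Proposition~\ref{prop:measure-correspondence} therefore places $\Phi_*\lambda$ in $\calM_1(X)$ with support contained in $K_\Omega$. It is a probability measure because $\Phi$ is a map into $\calT_1(X)$ and $\lambda(\Omega)=1$.

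\emph{Support.} By Lemma~\ref{lem:Phi-homeo}, $\Phi$ is a homeomorphism onto the compact set $K_\Omega$, so the push-forward of a measure has support equal to the image of its support: $\supp(\Phi_*\lambda)=\Phi(\supp\lambda)$. To verify this, take $T=\Phi(x)$ and an open neighborhood $U$ of $T$ in $\calT_1(X)$. Then $\Phi^{-1}(U)$ is an open neighborhood of $x$, so it has positive $\lambda$-measure exactly when $x\in\supp\lambda$. Points outside the closed set $K_\Omega$ have a neighborhood of measure zero. Since $\Omega$ is minimal, \eqref{eq:lambda-fullsupport} gives $\supp\lambda=\Omega$, and hence $\supp(\nu_\Omega)=\Phi(\Omega)=K_\Omega$.

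\emph{Uniqueness.} Let $\nu'$ be any finite root-change-invariant measure supported on $K_\Omega$. The bijectivity in Proposition~\ref{prop:measure-correspondence} gives $\nu'=\Phi_*\lambda'$ for some finite $B$-invariant measure $\lambda'$ on $\Omega$. If $\lambda'\ne0$, then $\lambda'/\lambda'(\Omega)$ is a $B$-invariant probability measure, so unique ergodicity forces $\lambda'=\lambda'(\Omega)\lambda$. By linearity of $\Phi_*$ this yields $\nu'=\lambda'(\Omega)\,\nu_\Omega$; the case $\lambda'=0$ is trivial.

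Everything here is routine once Proposition~\ref{prop:measure-correspondence} is available. The only point needing any care is the support computation, and it reduces to the homeomorphism statement of Lemma~\ref{lem:Phi-homeo}.
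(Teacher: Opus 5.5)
Your proof is correct and follows the paper's route: invariance and uniqueness come from the linear bijection of Proposition~\ref{prop:measure-correspondence} combined with unique ergodicity, and $\supp(\nu_\Omega)=K_\Omega$ comes from full support~\eqref{eq:lambda-fullsupport} together with Lemma~\ref{lem:Phi-homeo}. One wording fix: ``positive measure exactly when $x\in\supp\lambda$'' is not accurate for a single neighborhood, but since $\supp\lambda=\Omega$ you only need the direction that every neighborhood of $\Phi(x)$ has positive mass, and you state that direction correctly.
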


\begin{proof}
Root-change invariance and uniqueness follow from Proposition~\ref{prop:measure-correspondence} and unique ergodicity of $\Omega$.  Equation~\eqref{eq:lambda-fullsupport} and the fact that $\Phi$ is a homeomorphism imply
\[
 \supp(\nu_\Omega)=K_\Omega.
\]
\end{proof}

\subsection{The resulting subset current on $F_4$}

Define
\begin{equation}
 \mu_\Omega:=t(\nu_\Omega)\in\SC(F),
 \qquad F=F(a_0,a_1,b_0,b_1).
 \label{eq:muOmega}
\end{equation}

\begin{thm}\label{thm:symbolic}
Let $B=F(u,v)$ and let $\Omega\subseteq\{0,1\}^{B}$ be an infinite minimal uniquely ergodic subshift for the right shift~\eqref{eq:right-shift}.  Form the tree-coding current $\mu_\Omega\in\SC(F_4)$ as in Convention~\ref{conv:tree-coding} and~\eqref{eq:muOmega}.  Then $\mu_\Omega$ has the following properties.
\begin{enumerate}[(1)]
\item $\mu_\Omega$ is support-uniquely ergodic.
\item Every $S\in\supp(\mu_\Omega)$ is a Cantor set, and its convex hull in the Cayley tree of $F_4$ is exactly $4$-regular.  In particular
\[
 \supp(\mu_\Omega)\cap\dd F_4=\varnothing.
\]
\item $\supp(\mu_\Omega)$ is uncountable.  Consequently $\mu_\Omega$ is not rational.
\end{enumerate}
\end{thm}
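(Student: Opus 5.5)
The plan is to assemble the theorem from Corollary~\ref{cor:K-unique}, Corollary~\ref{cor:root-to-current-UE}, Lemma~\ref{lem:support-transfer}, and Lemma~\ref{lem:rational-countable}, with the tree coding of Lemmas~\ref{lem:embedding}--\ref{lem:root-shift} supplying the geometric facts.

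\textbf{Part (1).} By Lemma~\ref{lem:Phi-homeo} and the remark after Lemma~\ref{lem:root-shift}, $K_\Omega$ is a closed root-change-invariant subset of $\calT_1(X)$. Corollary~\ref{cor:K-unique} gives two facts:
\begin{enumerate}[(i)]
\item $\nu_\Omega$ is, up to scalar, the unique finite root-change-invariant measure supported on $K_\Omega$;
\item $\supp(\nu_\Omega)=K_\Omega$.
\end{enumerate}
These are exactly the hypotheses of Corollary~\ref{cor:root-to-current-UE}, which then yields support-unique ergodicity of $\mu_\Omega=t(\nu_\Omega)$.

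\textbf{Part (2).} By Lemma~\ref{lem:support-transfer}, $\supp(\mu_\Omega)=\Sat(K_\Omega)$. So every $S$ in the support satisfies $g^{-1}\Hull_X(S)=T_x$ for some $x\in\Omega$ and some vertex $g$. Hence $\Hull_X(S)=gT_x$ is exactly $4$-regular by Lemma~\ref{lem:embedding}, and $S=\partial(gT_x)=g\,\partial T_x$.

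The boundary of a locally finite tree in which every vertex has degree $4\ge3$ is a Cantor set; I will check this inside $\partial F$. The space $\partial T_x$ is closed in $\partial F$, hence compact and metrizable. It is totally disconnected because it is a subspace of $\partial F$. It has no isolated points: every ray in $T_x$ branches at each vertex, so every shadow neighborhood of an end contains at least two ends. In particular $\#S>2$, so $S\notin\dd F_4$.

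\textbf{Part (3).} Since $\Omega$ is infinite and minimal, it is a Cantor space, hence uncountable. The map $\Phi$ is injective, so $K_\Omega$ is uncountable. The map $T\mapsto\partial T$ is injective by the boundary--hull bijection, and each $\partial T$ with $T\in K_\Omega$ lies in $\Sat(K_\Omega)=\supp(\mu_\Omega)$. Therefore $\supp(\mu_\Omega)$ is uncountable. Lemma~\ref{lem:rational-countable} then shows $\mu_\Omega$ is not rational; note that $\mu_\Omega\ne0$ since $\nu_\Omega$ is a probability measure and $t$ is a bijection.

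\textbf{Main obstacle.} The only step beyond citation is the Cantor claim in part (2). The point to verify is that ends of a $4$-regular subtree accumulate within $\partial T_x$ in the topology of $\partial F$. I would argue this directly with shadows: the cylinder of ends passing through a vertex $h$ at distance $n$ along a ray contains an end of $T_x$ branching off at $h$, distinct from the ray itself.
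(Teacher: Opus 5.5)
Your proposal is correct and follows the paper's own proof: part (1) comes from Corollaries~\ref{cor:K-unique} and~\ref{cor:root-to-current-UE}, part (2) from $\supp(\mu_\Omega)=\Sat(K_\Omega)$ together with the exact $4$-regularity of the hulls, and part (3) from injectivity of $x\mapsto\partial T_x$ together with Lemma~\ref{lem:rational-countable}. Your explicit check that these boundaries are Cantor sets, and your remark that $\mu_\Omega\ne0$, only spell out details the paper leaves implicit.
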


\begin{proof}
Part~(1) follows immediately from Corollaries~\ref{cor:root-to-current-UE} and~\ref{cor:K-unique}.

For part~(2), Lemma~\ref{lem:support-transfer} gives
\[
 \supp(\mu_\Omega)=\Sat(K_\Omega).
\]
If $S\in\Sat(K_\Omega)$ and $g$ is any vertex of $\Hull_X(S)$, then
\[
 g^{-1}\Hull_X(S)\in K_\Omega.
\]
By Lemma~\ref{lem:embedding}, $\Hull_X(S)$ is exactly $4$-regular, so its boundary is a Cantor set.  It is not a two-point set, whose convex hull would be a bi-infinite line.

For part~(3), the map
\[
 \Omega\longrightarrow\supp(\mu_\Omega),\qquad x\longmapsto\partial T_x,
\]
is injective: if $\partial T_x=\partial T_y$, then the convex-hull correspondence gives $T_x=T_y$, and Lemma~\ref{lem:Phi-homeo} gives $x=y$.  Since an infinite minimal subshift is a Cantor space, $\supp(\mu_\Omega)$ is uncountable; Lemma~\ref{lem:rational-countable} then gives non-rationality.
\end{proof}

\begin{rem}\label{rem:atomless}
Under Corollary~\ref{cor:K-unique}, $\lambda$ is nonatomic: an atom would force a finite orbit.  Hence $\nu_\Omega$ is nonatomic, so $\mu_\Omega$ is genuinely measured rather than a disguised counting current.
\end{rem}

\section{Two choices of the symbolic input}

\subsection{An elementary construction factoring through $\Z$}

Let $Y\subseteq\{0,1\}^{\Z}$ be a nonempty subshift, with $(\sigma y)(k)=y(k+1)$.  Define
\begin{equation}
 \pi:B=F(u,v)\longrightarrow\Z,
 \qquad \pi(u)=1,\quad \pi(v)=0.
 \label{eq:p-to-Z}
\end{equation}
For $y\in Y$ put
\begin{equation}
 x_y(w)=y\bigl(\pi(w)\bigr)\quad(w\in B),
 \qquad \Omega_Y=\{x_y:y\in Y\}.
 \label{eq:pullback-subshift}
\end{equation}

\begin{prop}\label{prop:Fibonacci}
The map $y\mapsto x_y$ is a homeomorphism from $Y$ to $\Omega_Y$ satisfying
\[
 x_y\cdot w=x_{\sigma^{\pi(w)}y}.
\]
It identifies the invariant-measure simplices of $Y$ and $\Omega_Y$.  If $Y$ is infinite and minimal, so is $\Omega_Y$; if $Y$ is uniquely ergodic, so is $\Omega_Y$.
\end{prop}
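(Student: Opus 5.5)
The plan is to verify everything directly from the definitions, using that $\pi$ is a surjective homomorphism. First, the shift identity: for $w,z\in B$,
\[
 (x_y\cdot w)(z)=x_y(wz)=y\bigl(\pi(w)+\pi(z)\bigr)=(\sigma^{\pi(w)}y)(\pi(z))=x_{\sigma^{\pi(w)}y}(z),
\]
so $x_y\cdot w=x_{\sigma^{\pi(w)}y}$. In particular $\Omega_Y$ is $B$-invariant, because $Y$ is $\sigma$-invariant.

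Next, the map $y\mapsto x_y$. It is continuous, since each coordinate $x_y(w)$ is a single coordinate $y(\pi(w))$ of $y$. It is injective, because $\pi(u^k)=k$ gives $x_y(u^k)=y(k)$, so $y$ is recovered from $x_y$. Since $Y$ is compact and the target is Hausdorff, the map is a homeomorphism onto its image $\Omega_Y$, which is therefore closed. So $\Omega_Y$ is a subshift conjugate to $Y$. Under this conjugacy the $B$-action corresponds to the $\Z$-action pulled back along $\pi$: $u$ acts as $\sigma$ and $v$ acts trivially.

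For the measures, a Borel measure on $\Omega_Y$ is $B$-invariant if and only if it is invariant under the generators $u$ and $v$. The $v$-action is trivial and the $u$-action corresponds to $\sigma$. Hence push-forward along the conjugacy is an affine bijection, indeed a weak-* homeomorphism, between the $\sigma$-invariant probability measures on $Y$ and the $B$-invariant ones on $\Omega_Y$. This identifies the two simplices, and in particular gives transfer of unique ergodicity.

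Minimality transfers for the same reason. The closed $B$-invariant subsets of $\Omega_Y$ correspond exactly to the closed $\sigma$-invariant subsets of $Y$, since $B$-orbits map onto $\Z$-orbits because $\pi$ is surjective. Infiniteness transfers through the bijection. No step is a real obstacle. The only point needing care is the sign and direction convention of the right shift, which the displayed computation settles.
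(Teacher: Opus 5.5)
Your proof is correct and follows the same route as the paper. The shift identity comes from the homomorphism property of $\pi$. Recovering $y$ via $y(k)=x_y(u^k)$, together with compactness, gives the homeomorphism. Surjectivity of $\pi$, with $v$ acting trivially, transfers invariant measures, minimality, and unique ergodicity. You spell out the details the paper leaves implicit, including the sign check for the right shift and the generator-level invariance argument.
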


\begin{proof}
The map is continuous, and $y(k)=x_y(u^k)$ gives its inverse.  The displayed identity follows from the definition and the homomorphism property of $\pi$.  Since $\pi(B)=\Z$, the assertions about invariant measures and minimality follow.
\end{proof}

For an explicit uniquely ergodic choice, take the Fibonacci subshift $Y_{\rm Fib}$ associated to $0\mapsto01$, $1\mapsto0$, and write $\Omega_{\rm Fib}=\Omega_{Y_{\rm Fib}}$.  Its minimality, aperiodicity, and unique ergodicity are classical~\cite{Que10}.  Theorem~\ref{thm:symbolic} gives a support-uniquely-ergodic, non-rational current on $F_4$ with genuinely branching support.  Here $v$ acts trivially, so the symbolic dynamics are effectively one-dimensional.

\subsection{A genuinely nonabelian symbolic input}

For genuinely nonabelian symbolic dynamics one may instead use:

\begin{rem}[Free Toeplitz input]\label{rem:Toeplitz-input}
Cecchi Bernales--Cortez--G\'omez~\cite[Proposition~3.8 and Theorem~1.2]{CCG24} construct, for $B=F(u,v)$, a free regular binary Toeplitz $B$-subshift $\Omega_{\rm Tp}\subseteq\{0,1\}^B$ which is minimal and uniquely ergodic.  Applying Theorem~\ref{thm:symbolic} therefore gives another support-uniquely-ergodic, non-rational current on $F_4$ with genuinely branching support consisting of Cantor subsets of the boundary.  Since the $B$-action is free, it cannot factor through a proper quotient of $B$.  Equivalently, if $w\ne1$, then Lemma~\ref{lem:root-shift} gives
\[
 \kappa_x(w)^{-1}T_x=T_{x\cdot w}\ne T_x
\]
for every $x\in\Omega_{\rm Tp}$.
\end{rem}

\section{Induction through a malnormal quasiconvex free subgroup}

We transport the $F_4$ examples by an argument valid for general torsion-free hyperbolic groups.

\subsection{Quasiconvex induction of subset currents}

\begin{conv}[Quasiconvex induction setup]\label{conv:induction}
Throughout this subsection, $G$ denotes a non-elementary word-hyperbolic group and $H\le G$ an infinite quasiconvex subgroup.  We identify $\partial H$ with its image $\Lambda H\subseteq\partial G$ under the boundary embedding induced by inclusion.  The terms \emph{quasiconvex}, \emph{limit set}, and \emph{malnormal} are used in their standard hyperbolic-group sense; see Subsection~\ref{subsec:prelim-induction} and~\cite{BH99}.
\end{conv}

Sasaki~\cite[Theorem~4.3]{Sas22} defines a continuous $\R_{\ge0}$-linear map
\[
 \iota_H^G:\SC(H)\longrightarrow\SC(G)
\]
given at the measure level by
\begin{equation}
 \iota_H^G(\mu)
   =\sum_{gH\in G/H}g_*\mu,
 \label{eq:induction-sum}
\end{equation}
where $\mu$ is regarded as a measure on the hyperspace of closed subsets of $\Lambda H$.

We need the following local-finiteness fact.

\begin{lem}[Local finiteness of quasiconvex carriers]\label{lem:carrier-local-finite}
The family
\[
 \big\{\calH(g\Lambda H):gH\in G/H\big\}
\]
is locally finite in $\calH(\partial G)$.  Equivalently, every compact subset of $\calH(\partial G)$ meets only finitely many of these hyperspaces.
\end{lem}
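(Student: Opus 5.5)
Every compact $C\subseteq\calH(\partial G)$ is covered by finitely many open sets of the form
\[
 U_{\epsilon}(S_0)=\{S\in\calH(\partial G): S\text{ contains two points at visual distance}\ge\epsilon\},
\]
so the plan reduces local finiteness to a uniform separation statement. Fix a visual metric $d$ on $\partial G$. The function $S\mapsto\diam_d(S)$ is continuous for the Hausdorff topology and strictly positive on $\calH(\partial G)$, since $\#S\ge2$. Hence, by compactness, it is bounded below on $C$ by some $\epsilon>0$. It therefore suffices to show that only finitely many cosets $gH$ have limit set $g\Lambda H$ of $d$-diameter at least $\epsilon$. Once this is done, only those finitely many hyperspaces $\calH(g\Lambda H)$ can meet $C$, because every $S\in\calH(g\Lambda H)$ satisfies $S\subseteq g\Lambda H$, and so $\diam(g\Lambda H)\ge\diam S\ge\epsilon$.

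For the finiteness claim, I would pass to a Cayley graph $\Gamma$ of $G$ with basepoint $1$. Since $H$ is $K$-quasiconvex, each coset $gH$ is $K$-quasiconvex. The boundary $g\Lambda H$ is therefore contained in the boundary of the (uniformly quasiconvex) coset $gH$, and any bi-infinite geodesic joining two points of $g\Lambda H$ lies in a uniform neighborhood $N_{K'}(gH)$, with $K'$ depending only on $K$ and $\delta$. The standard visual-metric estimate says that two boundary points at visual distance $\ge\epsilon$ are joined by a geodesic passing within distance $D(\epsilon)$ of $1$. Combining these, if $\diam(g\Lambda H)\ge\epsilon$, then $d_\Gamma(1,gH)\le D(\epsilon)+K'$. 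So some element $gh$ of the coset lies in the ball of radius $R=D(\epsilon)+K'$ about $1$. Since that ball is finite and each of its elements lies in exactly one left coset of $H$, only finitely many cosets $gH$ qualify.

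The main technical point is the middle step: the uniform fellow-travelling of geodesics with endpoints in $g\Lambda H$ and the coset $gH$, with constants independent of $g$. I would obtain this by translating by $g^{-1}$ to reduce to $H$ itself, using left-invariance of the word metric. For $H$ it is the standard fact that bi-infinite geodesics between limit points of a quasiconvex subgroup stay uniformly close to it, which follows from quasiconvexity via limits of geodesic segments between elements of $H$ (see~\cite{BH99}). The equivalence of the two formulations in the statement is immediate, because $\calH(\partial G)$ is locally compact and metrizable. Alternatively, one can cite~\cite[Theorem~4.3]{Sas22} directly, whose proof contains this finiteness.
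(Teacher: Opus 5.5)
Your proof is correct and follows essentially the same route as the paper. Both arguments take a positive lower bound on $\diam_d$ over the compact set, use a visual-metric estimate to place a geodesic between two far-apart points of $g\Lambda H$ near $1$, use uniform quasiconvexity of the cosets to place that geodesic near $gH$, and conclude because only finitely many cosets meet a fixed ball. Your opening sentence about covering $C$ by the sets $U_\epsilon(S_0)$ is unnecessary, and as written with ``$\ge\epsilon$'' those sets are closed rather than open, but nothing in the argument depends on it.
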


\begin{proof}
Fix a visual metric $d$ on $\partial G$; see, for example,~\cite[Chapter~III.H]{BH99}.  If $C\subseteq\calH(\partial G)$ is compact, then
\[
 \inf_{S\in C}\diam_d(S)>0.
\]
Indeed, continuity of diameter would otherwise produce a singleton limit, contradicting compactness of $C$ inside $\calH(\partial G)$.

It therefore suffices to prove that, for every $\epsilon>0$, only finitely many cosets $gH$ satisfy
\[
 \diam_d(g\Lambda H)\ge\epsilon.
\]
Choose $\xi,\eta\in g\Lambda H$ with $d(\xi,\eta)\ge\epsilon$.  Visual-metric estimates put a geodesic joining them uniformly close to $1$, with bound depending on $\epsilon$.  Quasiconvexity puts that geodesic in a uniform neighborhood of $gH$.  Hence $gH$ meets a fixed finite ball, which meets only finitely many left cosets.
\end{proof}

We next use malnormality.  Recall that $H$ is malnormal if
\[
 H\cap gHg^{-1}=\{1\}\qquad\text{for every }g\notin H.
\]
For quasiconvex subgroups of a hyperbolic group one has the standard limit-set intersection formula
\begin{equation}
 \Lambda A\cap\Lambda B=\Lambda(A\cap B)
 \label{eq:limit-intersection}
\end{equation}
whenever $A$ and $B$ are quasiconvex; formula~\eqref{eq:limit-intersection} is the word-hyperbolic special case of~\cite[Theorem~1.1]{Yan12}.  Thus, if $H$ is malnormal and $gH\ne hH$, then
\begin{equation}
 g\Lambda H\cap h\Lambda H=\varnothing.
 \label{eq:disjoint-limitsets}
\end{equation}

\begin{prop}[Malnormal induction preserves carried-current cones]\label{prop:induction-cone}
Suppose that $H$ is malnormal.  Let
\[
 L\subseteq\calH(\Lambda H)
\]
be closed and $H$-invariant, and put
\begin{equation}
 \widehat L:=\bigsqcup_{gH\in G/H}gL\subseteq\calH(\partial G).
 \label{eq:induced-carrier}
\end{equation}
Then $\widehat L$ is closed and $G$-invariant, and quasiconvex induction restricts to an $\R_{\ge0}$-linear bijection
\[
 \iota_H^G:\SC_H(L)\longrightarrow\SC_G(\widehat L).
\]
The inverse map is restriction to the base carrier $\calH(\Lambda H)$.  If $L$ is minimal for the $H$-action, then $\widehat L$ is minimal for the $G$-action.
\end{prop}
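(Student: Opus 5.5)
The plan has four parts: the topological properties of $\widehat L$, the fact that $\iota_H^G$ lands in $\SC_G(\widehat L)$, injectivity and surjectivity through restriction, and minimality.

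\textbf{Topology of $\widehat L$.} $G$-invariance is immediate from~\eqref{eq:induced-carrier}, since $g'(gL)=(g'g)L$ and $gL$ depends only on $gH$ because $L$ is $H$-invariant. Disjointness of the union follows from~\eqref{eq:disjoint-limitsets}: for $gH\neq hH$, the hyperspaces $\calH(g\Lambda H)$ and $\calH(h\Lambda H)$ are disjoint.

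For closedness, let $S_k\in g_kL$ converge to $S\in\calH(\partial G)$. A convergent sequence together with its limit is compact, so Lemma~\ref{lem:carrier-local-finite} leaves only finitely many cosets $g_kH$. Passing to a subsequence, all $S_k$ lie in a single $gL$. Since $gL$ is closed in $\calH(g\Lambda H)$ and $\calH(g\Lambda H)$ is closed in $\calH(\partial G)$ (as $g\Lambda H$ is closed), we get $S\in gL\subseteq\widehat L$.

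\textbf{The image lies in $\SC_G(\widehat L)$.} Take $\mu\in\SC_H(L)$. Each summand $g_*\mu$ in~\eqref{eq:induction-sum} is supported on $gL$. By local finiteness, on a small neighbourhood of any point only finitely many summands are nonzero. Hence
\[
\supp\bigl(\iota_H^G\mu\bigr)=\overline{\textstyle\bigcup_{gH} g\,\supp(\mu)}\subseteq\widehat L,
\]
using that $\widehat L$ is closed.

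\textbf{Restriction map and bijectivity.} For $\rho\in\SC_G(\widehat L)$, define $r(\rho):=\rho|_{\calH(\Lambda H)}$. The first point to check is that $\calH(\Lambda H)\cap\widehat L=L$. This holds because a set $S\subseteq\Lambda H$ lying in $gL$ for $gH\ne H$ would satisfy $S\subseteq g\Lambda H\cap\Lambda H=\varnothing$, which is impossible.

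Next, $r(\rho)$ is a subset current on $H$. It is $H$-invariant because $\calH(\Lambda H)$ is $H$-invariant. It is locally finite because $\calH(\Lambda H)$ is closed in $\calH(\partial G)$, so compact subsets of $\calH(\Lambda H)$ (equivalently, of $\calH(\partial H)$ under the boundary embedding) are compact in $\calH(\partial G)$.

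The composite $r\circ\iota_H^G$ is the identity, since only the coset $H$ contributes on $\calH(\Lambda H)$. In the other direction, $\rho$ is concentrated on the disjoint union of the Borel pieces $gL$, and $G$-invariance gives
\[
\rho|_{gL}=g_*\bigl(\rho|_{L}\bigr).
\]
Summing over cosets yields $\rho=\iota_H^G(r\rho)$. Linearity of both maps is clear. This decomposition step, together with local finiteness, is where I expect the main care to be needed: one must make sure the countable sum is a Borel identity and not merely an identity on cylinders, but disjointness makes it a plain countable additivity statement.

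\textbf{Minimality.} Let $M\subseteq\widehat L$ be nonempty, closed and $G$-invariant. Then $M\cap L$ is closed and $H$-invariant. It is nonempty: translating any point of $M$ by some $g^{-1}$ moves it into $L$, and $M$ is $G$-invariant. Minimality of $L$ gives $M\cap L=L$, and $G$-invariance of $M$ then gives $M\supseteq\widehat L$, so $M=\widehat L$.
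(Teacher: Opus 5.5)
Your proposal is correct and follows essentially the same route as the paper. Closedness comes from disjointness of the carrier hyperspaces together with Lemma~\ref{lem:carrier-local-finite}, and induction is inverted by restricting to $\calH(\Lambda H)$ and using $G$-invariance and countable additivity over the disjoint pieces $gL$; the only differences are cosmetic: you prove closedness sequentially rather than as a locally finite union of closed sets, and you prove minimality by intersecting a nonempty closed $G$-invariant $M\subseteq\widehat L$ with $L$, whereas the paper shows directly that every $G$-orbit in $\widehat L$ is dense.
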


\begin{proof}
By~\eqref{eq:disjoint-limitsets}, the carrier hyperspaces $\calH(g\Lambda H)$ are pairwise disjoint.  By Lemma~\ref{lem:carrier-local-finite}, this family is locally finite.  Therefore the family of closed sets $gL$ is locally finite, so the disjoint union~\eqref{eq:induced-carrier} is closed.  It is plainly $G$-invariant.

If $\mu\in\SC_H(L)$, then the induction formula~\eqref{eq:induction-sum} gives
\[
 \iota_H^G(\mu)=\sum_{gH\in G/H}g_*\mu,
\]
and local finiteness implies that this current is supported on $\widehat L$.  Conversely, let $\rho\in\SC_G(\widehat L)$ and define
\[
 \rho_H:=\left.\rho\right|_{\calH(\Lambda H)}.
\]
The carrier $\calH(\Lambda H)$ is closed, hence Borel, in $\calH(\partial G)$, and restriction of a locally finite measure to it is locally finite.  Because it is $H$-invariant and all other carrier hyperspaces are disjoint from it, $\rho_H$ is an $H$-subset current supported on $L$.  For every coset $gH$, $G$-invariance of $\rho$ gives
\[
 \left.\rho\right|_{\calH(g\Lambda H)}=g_*\rho_H.
\]
Since $\rho$ is supported on the countable disjoint union of the carrier hyperspaces, countable additivity yields
\[
 \rho=\sum_{gH\in G/H}g_*\rho_H=\iota_H^G(\rho_H).
\]
Local finiteness also makes the induced support the union of the translated supports, so restriction is inverse to induction.

Assume now that $L$ is minimal.  Let $S=gS_0$ and $T=hT_0$ be points of $\widehat L$, where $S_0,T_0\in L$.  There exist $h_n\in H$ with $h_nS_0\to T_0$.  Hence
\[
 (hh_ng^{-1})S=hh_nS_0\longrightarrow hT_0=T.
\]
Thus every $G$-orbit is dense in $\widehat L$, so $\widehat L$ is minimal.
\end{proof}

\begin{cor}[Consequences of malnormal induction]\label{cor:induction-consequences}
Suppose that $H$ is malnormal.  Let $0\ne\mu\in\SC(H)$ and put $\widehat\mu=\iota_H^G(\mu)$.  Then:
\begin{enumerate}[(1)]
\item The support of the induced current is
\[
 \supp(\widehat\mu)=\bigsqcup_{gH\in G/H}g\supp(\mu).
\]
\item If $\mu$ is support-uniquely ergodic, then $\widehat\mu$ is support-uniquely ergodic.
\item If every element of $\supp(\mu)$ is a Cantor set, then every element of $\supp(\widehat\mu)$ is a Cantor set and
\[
 \supp(\widehat\mu)\cap\dd G=\varnothing.
\]
\item If $\supp(\mu)$ is uncountable, then $\supp(\widehat\mu)$ is uncountable.  If, in addition, $G$ is a nonabelian free group or a closed hyperbolic surface group, then $\widehat\mu$ is non-rational.
\end{enumerate}
\end{cor}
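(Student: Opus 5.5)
The plan is to derive all four items from Proposition~\ref{prop:induction-cone}, applied with $L=\supp(\mu)$. This $L$ is a closed $H$-invariant subset of $\calH(\Lambda H)$: here $\mu$ is a measure on the hyperspace of $\partial H=\Lambda H$, and that hyperspace is closed in $\calH(\partial G)$ because $\Lambda H$ is closed.

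For (1), set $\widehat L=\bigsqcup_{gH}g\supp(\mu)$. By the proposition, $\widehat L$ is closed and $G$-invariant, and $\widehat\mu=\sum_{gH}g_*\mu$ is supported on $\widehat L$, so $\supp(\widehat\mu)\subseteq\widehat L$. For the reverse inclusion, take $S=gS_0$ with $S_0\in\supp(\mu)$. Every neighborhood $U$ of $S$ satisfies $\widehat\mu(U)\ge g_*\mu(U)=\mu(g^{-1}U)>0$. Strictly, this gives positivity of $\mu$ on $g^{-1}U\cap\calH(\Lambda H)$, which is a neighborhood of $S_0$ in the subspace $\calH(\Lambda H)$, and that is exactly what $S_0\in\supp(\mu)$ means. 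Hence $\supp(\widehat\mu)=\widehat L$.

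For (2), let $\rho\in\SC(G)$ with $\supp(\rho)\subseteq\supp(\widehat\mu)=\widehat L$. Then $\rho\in\SC_G(\widehat L)$, so by the bijection of Proposition~\ref{prop:induction-cone} we have $\rho=\iota_H^G(\rho_H)$ with $\rho_H\in\SC_H(\supp\mu)$. Support-unique ergodicity of $\mu$ gives $\rho_H=c\mu$ for some $c\ge0$. By linearity, $\rho=c\widehat\mu$.

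For (3), every element of $\widehat L$ has the form $gS_0$ with $S_0\in\supp(\mu)$ a Cantor subset of $\Lambda H\subseteq\partial G$. Since $g$ acts by a homeomorphism of $\partial G$, $gS_0$ is again a Cantor set. In particular it is infinite, so it does not lie in $\dd G$.

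For (4), $\supp(\mu)$ embeds into $\supp(\widehat\mu)$ (take the trivial coset), so $\supp(\widehat\mu)$ is uncountable. When $G$ is a nonabelian free group or a closed hyperbolic surface group, Lemma~\ref{lem:rational-countable} then rules out rationality, since $\widehat\mu\ne0$. The only point needing care is the support computation in (1): the union $\widehat L$ is infinite, so one must use local finiteness (Lemma~\ref{lem:carrier-local-finite}) to know that it is closed and that the support of the sum equals the union of the translated supports. The proposition already supplies this.
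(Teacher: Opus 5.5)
Your proposal is correct and follows essentially the same route as the paper. Everything is deduced from Proposition~\ref{prop:induction-cone} applied to $L=\supp(\mu)$ (local finiteness gives the support formula, and the cone bijection gives support-unique ergodicity), from the fact that $G$ acts on $\partial G$ by homeomorphisms, and from Lemma~\ref{lem:rational-countable}; you additionally spell out the two inclusions in part~(1) that the paper compresses into ``follows from the proposition and local finiteness.''
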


\begin{proof}
Part~(1) follows from Proposition~\ref{prop:induction-cone} and local finiteness, while the cone bijection in that proposition gives part~(2).  The boundary action preserves topology and cardinality, which proves the first assertions in parts~(3) and~(4); the Cantor-set conclusion in part~(3) also gives disjointness from $\dd G$.  Finally, Lemma~\ref{lem:rational-countable} gives the non-rationality assertion in part~(4).
\end{proof}

\subsection{Existence of the malnormal free subgroup}

The remaining group-theoretic input is:

\begin{thm}[Kielak--Linton]\label{thm:KL-malnormal}
Let $G$ be a non-elementary torsion-free hyperbolic group and let $\ell\ge1$.  Then $G$ contains a malnormal quasiconvex free subgroup of rank $\ell$.
\end{thm}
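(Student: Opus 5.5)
The statement is an external result of Kielak--Linton~\cite[Theorem~1.11]{KL24}, so the intended proof is a citation. Still, here is how I would reconstruct it from standard tools, in case one wants a self-contained route.

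\emph{Step 1: two independent loxodromics.} Since $G$ is non-elementary and torsion-free, it contains infinite-order elements $g,h$ with disjoint fixed-point pairs on $\partial G$. Each such element generates its own maximal cyclic subgroup only up to roots, so I would first replace $g$ and $h$ by generators of the maximal cyclic subgroups containing them. In a torsion-free hyperbolic group these maximal cyclic subgroups $E(g)$, $E(h)$ are malnormal.

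\emph{Step 2: a free quasiconvex subgroup of rank $\ell$.} By ping-pong on $\partial G$, suitable powers $g^{n}$ and $h^{n}$ generate a free quasiconvex subgroup. The elements
\[
 x_i = g^{n} h^{\,n+i} g^{n}, \qquad i=1,\dots,\ell,
\]
then also play ping-pong and generate a free quasiconvex subgroup $P$ of rank $\ell$, with a quasi-isometrically embedded orbit map.

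\emph{Step 3: malnormality (the main obstacle).} Suppose $P\cap cPc^{-1}$ contains a nontrivial element $p=cqc^{-1}$ with $c\notin P$. Fellow-travelling of the axes of $p$ and $q$ in the Cayley graph then forces long subwords of the normal forms of $p$ and $q$ to coincide up to a bounded conjugator. I would choose the exponents $n+i$ pairwise distinct and very large, and also avoid any proper-power phenomena, so that the only long overlaps are trivial ones. This pins $c$ into $P$, using the malnormality of $E(g)$ and $E(h)$.

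The hardest part is controlling these overlaps uniformly, that is, turning the slim-triangle estimates into a combinatorial ``small cancellation'' statement. Rather than do this by hand, I would invoke the small-cancellation/Dehn-filling machinery, or the theorem of Kapovich~\cite{Kap99}, as Kielak--Linton do. This is why, in the paper, the statement is simply quoted from~\cite[Theorem~1.11]{KL24}.
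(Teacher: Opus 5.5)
Your primary answer matches the paper, which does not prove this statement but quotes it as the case $k=0$ of \cite[Theorem~1.11]{KL24}. Your optional reconstruction, however, fails at Step~2. For any exponents $m_i$, the elements $x_i=g^nh^{m_i}g^n$ satisfy $x_ix_1^{-1}=g^nh^{m_i-m_1}g^{-n}$. So $\langle x_1,\dots,x_\ell\rangle$ is generated by $x_1$ together with a single conjugate of a power of $h$. It therefore has rank at most $2$, and is not free of rank $\ell$ once $\ell\ge3$. The $x_i$ cannot play ping-pong, because they all begin with the same long prefix $g^nh^{\min m_i}$ and end with the same long suffix $h^{\min m_i}g^n$, and these cancel in $x_ix_j^{-1}$. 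For your choice $m_i=n+i$ this is explicit, in any group:
\[
 x_2x_1^{-1}=g^nhg^{-n},\qquad x_i=(x_2x_1^{-1})^{i-1}x_1,\qquad (x_2x_1^{-1})^{-(n+1)}x_1=g^{2n}.
\]
Thus $P$ contains $g^{2n}$. Unless $g\in P$, the conjugator $c=g$ gives $1\ne g^{2n}\in P\cap gPg^{-1}$ with $c\notin P$. So no overlap analysis in Step~3 can ``pin $c$ into $P$''. Making the exponents distinct and very large does not help, because the defect comes from the shared outer syllables $g^n\cdots g^n$, not from the sizes of the exponents.

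If you want a self-contained route, separate the rank from the real difficulty. Suppose $P_0\le G$ is any malnormal quasiconvex free subgroup of rank at least $2$. Take a malnormal subgroup $Q\le P_0$ of rank $\ell$; such subgroups exist in every nonabelian free group. Then $Q$ is quasiconvex in $P_0$, hence in $G$. Malnormality is transitive: $Q\cap cQc^{-1}\ne1$ forces $P_0\cap cP_0c^{-1}\ne1$, hence $c\in P_0$, hence $c\in Q$. The genuine content is therefore producing $P_0$. Here note that your intermediate group $\langle g^n,h^n\rangle$ is itself not malnormal for $n\ge2$: it contains $g^n$ but not $g$, since $G$ is torsion-free. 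So conjugators in $G\setminus\langle g^n,h^n\rangle$, such as powers of $g$ and $h$, have to be controlled explicitly. That is exactly the step your sketch leaves to the cited machinery.
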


Theorem~\ref{thm:KL-malnormal} is the case $k=0$ of~\cite[Theorem~1.11]{KL24}; see also~\cite{Kap99} for earlier free-subgroup constructions.

\subsection{The main existence theorem}\label{subsec:proof-main}

\begin{proof}[Proof of Theorem~\ref{mainthm:main}]
The Fibonacci pullback from Section~4 and Theorem~\ref{thm:symbolic} provide a current $\mu_4\in\SC(F_4)$ having properties~(a)--(c).  For $G$ as in the theorem, Theorem~\ref{thm:KL-malnormal} gives a malnormal quasiconvex subgroup $H\cong F_4$.  Transport $\mu_4$ to $H$ and induce it to $G$.  Corollary~\ref{cor:induction-consequences} preserves support-unique ergodicity, Cantor leaves, genuine branching, and uncountability, and in the free and closed-surface cases it also gives non-rationality.  Thus the induced current has all the required properties.
\end{proof}

\begin{cor}\label{cor:all-hyp}
Let $G$ be any non-elementary torsion-free hyperbolic group.  Then $G$ admits a nonzero support-uniquely-ergodic subset current $\mu$ such that every element of $\supp(\mu)$ is a Cantor subset of $\partial G$.  In particular $\supp(\mu)\cap\dd G=\varnothing$, and $\supp(\mu)$ is uncountable.
\end{cor}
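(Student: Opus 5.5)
The argument follows the proof of Theorem~\ref{mainthm:main}, with the group-specific inputs replaced by their general versions. Since nothing in Section~3 depends on the ambient group, we begin as there. Take the Fibonacci pullback $\Omega_{\rm Fib}\subseteq\{0,1\}^B$ of Section~4. By Proposition~\ref{prop:Fibonacci} it is infinite, minimal and uniquely ergodic. Theorem~\ref{thm:symbolic} then produces $\mu_4:=\mu_{\Omega_{\rm Fib}}\in\SC(F_4)$, which is support-uniquely ergodic, has an uncountable support, and each of whose support leaves is a Cantor subset of $\partial F_4$. One could equally use the Toeplitz input of Remark~\ref{rem:Toeplitz-input}.

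Next, let $G$ be non-elementary and torsion-free hyperbolic. Theorem~\ref{thm:KL-malnormal} with $\ell=4$ gives a malnormal quasiconvex subgroup $H\cong F_4$. Choose an isomorphism $F_4\to H$ and transport $\mu_4$ to a current $\mu_H\in\SC(H)$. This transport uses the induced homeomorphism $\partial F_4\to\partial H$ and hence of hyperspaces. Because it is an equivariant homeomorphism, it preserves support-unique ergodicity, Cantor leaves, and uncountability of the support. Through the boundary embedding $\partial H\cong\Lambda H\subseteq\partial G$ of Convention~\ref{conv:induction}, regard $\mu_H$ as a measure on $\calH(\Lambda H)$, and set $\mu:=\iota_H^G(\mu_H)$.

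Corollary~\ref{cor:induction-consequences} now does the work. Part~(2) gives support-unique ergodicity of $\mu$. Part~(3) shows that every element of $\supp(\mu)$ is of the form $gS$, where $S\subseteq\Lambda H$ is a Cantor set, so each is a Cantor subset of $\partial G$. A Cantor set is infinite, hence $\supp(\mu)\cap\dd G=\varnothing$. The first half of part~(4) gives uncountability. Nonzeroness holds because $\mu$ restricts back to $\mu_H\neq0$ by Proposition~\ref{prop:induction-cone}.

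The one point needing care is the use of torsion-freeness. Theorem~\ref{thm:KL-malnormal} needs it, and the disjointness~\eqref{eq:disjoint-limitsets} also relies on the limit-set intersection formula~\eqref{eq:limit-intersection}. For malnormal $H$ and $g\notin H$, the subgroup $H\cap gHg^{-1}$ is trivial, so its limit set is empty. We only need Lemma~\ref{lem:carrier-local-finite} and Proposition~\ref{prop:induction-cone} at the generality of Convention~\ref{conv:induction}, and they are stated there. Nothing about rationality is claimed, so the surface/free hypothesis of Lemma~\ref{lem:rational-countable} is not needed.
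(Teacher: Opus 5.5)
Your proposal is correct and follows the paper's own argument. The paper proves this corollary by noting that the proof of Theorem~\ref{mainthm:main} uses only non-elementarity, torsion-freeness and hyperbolicity, with the free/surface hypothesis needed only for non-rationality; that proof is the Fibonacci tree-coding current on $F_4$, a malnormal quasiconvex $H\cong F_4$ from Theorem~\ref{thm:KL-malnormal}, and malnormal induction via Corollary~\ref{cor:induction-consequences}, and you carry out these same steps in more detail.
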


\begin{proof}
The proof uses only non-elementarity, torsion-freeness, hyperbolicity, Theorem~\ref{thm:KL-malnormal}, and quasiconvex induction; only non-rationality uses the special classes of groups.
\end{proof}

\section{Universality of carried-current simplices}\label{sec:universality}

\begin{conv}[Universality coding notation]\label{conv:universality}
Throughout this section, $B=F(u,v)$ and $F_4=F(a_0,a_1,b_0,b_1)$.  For a nonempty closed $B$-invariant subshift $\Omega\subseteq\{0,1\}^{B}$, the objects $K_\Omega$, $\Sat(K_\Omega)$, and the associated carried-current maps are as in Convention~\ref{conv:tree-coding}, Definition~\ref{def:saturation}, and Proposition~\ref{prop:measure-correspondence}.
\end{conv}

The tree coding has the following full measure-theoretic form; Proposition~\ref{prop:measure-correspondence} uses only~\eqref{eq:root-shift}.

\begin{prop}[Measure-cone correspondence]\label{prop:general-measure-cone}
For every nonempty closed $B$-invariant subshift $\Omega\subseteq\{0,1\}^B$, the construction of Section~3 gives an $\R_{\ge0}$-linear homeomorphism
\begin{equation}
 \{\text{finite $B$-invariant Borel measures on $\Omega$}\}
 \longrightarrow \SC_{F_4}(\Sat(K_\Omega)).
 \label{eq:general-cone-isom}
\end{equation}
If $\Omega$ is minimal, then $\Sat(K_\Omega)$ is a minimal $F_4$-invariant subset of $\calH(\partial F_4)$.  In that case every nonzero current carried by $\Sat(K_\Omega)$ has support equal to $\Sat(K_\Omega)$.
\end{prop}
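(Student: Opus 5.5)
The map in \eqref{eq:general-cone-isom} will be the composite $\lambda\mapsto t(\Phi_*\lambda)$. By Proposition~\ref{prop:measure-correspondence}, $\Phi_*$ is an $\R_{\ge0}$-linear bijection onto $\{\nu\in\calM_1(X):\supp(\nu)\subseteq K_\Omega\}$. That proposition used only the root-shift identity~\eqref{eq:root-shift} and the clopen decomposition of the root-change domains, so no minimality or unique ergodicity is needed. Theorem~\ref{thm:KN-rooted} then gives the $\R_{\ge0}$-linear homeomorphism $t:\calM_1(X)\to\SC(F_4)$.

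It remains to identify the image with $\SC_{F_4}(\Sat(K_\Omega))$, and I would do this with Lemma~\ref{lem:support-transfer}. If $\supp(\nu)\subseteq K_\Omega$, then $\supp(t(\nu))=\Sat(\supp\nu)\subseteq\Sat(K_\Omega)$. Conversely, let $\rho\in\SC_{F_4}(\Sat(K_\Omega))$ and write $\rho=t(\nu')$. For $Y\in\supp(\nu')$ we have $\partial Y\in\Sat(\supp\nu')=\supp\rho\subseteq\Sat(K_\Omega)$. Since $1\in Y=\Hull_X(\partial Y)$, the definition of $\Sat$ together with root-change invariance of $K_\Omega$ (Lemma~\ref{lem:root-shift}) gives $Y\in K_\Omega$. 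This reruns the last paragraph of the proof of Lemma~\ref{lem:support-transfer} with $K_\Omega$ in place of $\supp\nu$; that lemma as stated assumes $K=\supp\nu$, so I repeat the argument rather than cite it.

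For continuity, $t$ is a homeomorphism of cones and the carried cone is closed. The remaining point is that $\Phi_*$ is a homeomorphism for the weak-* topologies. I would argue on cylinders: rooted-tree cylinders meeting $K_\Omega$ pull back under the homeomorphism $\Phi$ of Lemma~\ref{lem:Phi-homeo} to clopen sets, and cylinder sets of $\Omega$ are images of such. Convergence of finite measures on compact totally disconnected spaces is tested on clopen sets, together with the total masses, which are $\lambda(\Omega)=\nu(\calT_1(X))$. I expect this topological bookkeeping to be the only mildly delicate step, and it is routine.

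Now assume $\Omega$ is minimal. First, $\Sat(K_\Omega)$ is closed and $F_4$-invariant: invariance is clear from the definition, and closedness follows because $\{S:1\in\Hull_X S\}$ is open and maps homeomorphically onto $\calT_1(X)$. More carefully, if $S_n\to S$ with $g\in\Hull_X(S)$, then $g\in\Hull_X(S_n)$ for large $n$ and $g^{-1}\Hull_X(S_n)\to g^{-1}\Hull_X(S)$ in $\calT_1(X)$, with limit in the closed set $K_\Omega$.

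For minimality, take $S,S'\in\Sat(K_\Omega)$. After translating, $\Hull_X S=T_x$ and $\Hull_X S'=T_y$ with $x,y\in\Omega$. Minimality gives $w_n\in B$ with $x\cdot w_n\to y$, so by \eqref{eq:root-shift}, $\kappa_x(w_n)^{-1}T_x=T_{x\cdot w_n}\to T_y$ in $\calT_1(X)$. The rooted boundary--hull homeomorphism then gives $\kappa_x(w_n)^{-1}S\to S'$. Hence every orbit is dense.

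Finally, for any nonzero carried $\mu$, the support is a nonempty closed invariant subset of the minimal set $\Sat(K_\Omega)$, so it equals $\Sat(K_\Omega)$.
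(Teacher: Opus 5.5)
Your proposal is correct and follows essentially the paper's route: the composite $t\circ\Phi_*$, identification of its image via the support-transfer argument plus the fact that the rooted transversal of $\Sat(K_\Omega)$ is $K_\Omega$, closedness of $\Sat(K_\Omega)$ via the local homeomorphisms $S\mapsto g^{-1}\Hull_X(S)$, and density of orbits from the root-shift identity~\eqref{eq:root-shift}. Rerunning the last step of Lemma~\ref{lem:support-transfer} with $K_\Omega$ in place of $\supp\nu$ is more careful than the paper's bare citation, since for a general (possibly non-minimal) $B$-subshift there need not be an invariant measure whose support is all of $\Omega$; note also that your closedness argument never uses minimality, so it can come before the cone identification.
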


\begin{proof}
The rooted transversal of $\Sat(K_\Omega)$ is exactly $K_\Omega$.  The open sets $\{S:g\in\Hull_X(S)\}$, for $g\in F_4$, cover $\calH(\partial F_4)$.  On each such set, the map $S\mapsto g^{-1}\Hull_X(S)$ is a homeomorphism onto $\calT_1(X)$, and its inverse image of $K_\Omega$ is the intersection with $\Sat(K_\Omega)$.  Since $K_\Omega$ is closed, $\Sat(K_\Omega)$ is closed, and it is $F_4$-invariant by definition.

Compose the push-forward $\Phi_*$ of Proposition~\ref{prop:measure-correspondence} with the homeomorphism $t$ of Theorem~\ref{thm:KN-rooted}.  Lemma~\ref{lem:support-transfer} and the preceding description of the rooted transversal show that this composite identifies exactly the two cones in~\eqref{eq:general-cone-isom}.  Since $\Phi$ is a homeomorphism, both maps are linear homeomorphisms onto their images, so the composite is as well.

If $\Omega$ is minimal, the root-shift identity and density of each $B$-orbit make every $F_4$-orbit dense in $\Sat(K_\Omega)$.  Thus the saturation is minimal, and every nonzero carried current has full support there.
\end{proof}

For a minimal $B$-subshift $\Omega$ admitting an invariant probability measure, let $\mathcal M_B(\Omega)$ denote its compact simplex of invariant probability measures.  Put $A=\{a_0,a_1,b_0,b_1\}$, the standard basis of $F_4$ from Convention~\ref{conv:universality}.  By Definition~\ref{def:normalized-current-base}, $\mathcal M_A(\Sat(K_\Omega))$ is the normalized carried-current base associated to this marking.  Proposition~\ref{prop:general-measure-cone} restricts to an affine homeomorphism
\begin{equation}
 \mathcal M_B(\Omega)\cong_{\rm aff}\mathcal M_A(\Sat(K_\Omega)).
 \label{eq:normalized-cone-correspondence}
\end{equation}

\begin{thm}[Universality on $F_4$]\label{thm:F4-universality}
Let $\mathcal K$ be a nonempty metrizable Choquet simplex.  There exists an uncountable minimal subset lamination
\[
 L\subseteq\calH(\partial F_4)
\]
such that every $S\in L$ is a Cantor set whose convex hull in the standard Cayley tree of $F_4$ is exactly $4$-regular, and
\begin{equation}
 \mathcal M_A(L)\cong_{\rm aff}\mathcal K.
 \label{eq:F4-universality}
\end{equation}
Every nonzero current carried by $L$ has support exactly $L$ and is non-rational.  The extreme points of $\mathcal K$ correspond exactly to the ergodic rays of $\SC_{F_4}(L)$.
\end{thm}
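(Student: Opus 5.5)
The plan is to realize $\mathcal K$ symbolically on $\Z$, pull the resulting system back to $B=F(u,v)$, and then run it through the tree coding of Section~3. By Downarowicz's theorem~\cite[Theorem~1]{Dow91}, there is a minimal binary Toeplitz $\Z$-subshift $Y\subseteq\{0,1\}^{\Z}$ whose invariant-measure simplex $\mathcal M_\Z(Y)$ is affinely homeomorphic to $\mathcal K$.

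We must make sure $Y$ is infinite. A minimal Toeplitz subshift given by a non-periodic Toeplitz sequence is infinite. The only degenerate possibility is a periodic Toeplitz sequence, which gives a finite orbit and a one-point simplex. In that case we replace $Y$ by the Fibonacci subshift, which is infinite, minimal and uniquely ergodic, so its simplex is again a point. Thus in all cases we may take $Y$ infinite and minimal with $\mathcal M_\Z(Y)\cong_{\rm aff}\mathcal K$.

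Next we pass to $B$ and apply the coding. Form $\Omega=\Omega_Y\subseteq\{0,1\}^B$ as in~\eqref{eq:pullback-subshift}. By Proposition~\ref{prop:Fibonacci}, $\Omega$ is infinite and minimal, and $\mathcal M_B(\Omega)\cong_{\rm aff}\mathcal M_\Z(Y)$. Strictly, that proposition identifies invariant-measure simplices; the point is that $x_y\cdot w=x_{\sigma^{\pi(w)}y}$ and $\pi$ is onto, so $B$-invariance coincides with $\sigma$-invariance under the homeomorphism $y\mapsto x_y$. Put $L:=\Sat(K_\Omega)$.

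By Proposition~\ref{prop:general-measure-cone}, $L$ is closed, $F_4$-invariant and minimal, and every nonzero current carried by $L$ has support exactly $L$. By~\eqref{eq:normalized-cone-correspondence}, $\mathcal M_A(L)\cong_{\rm aff}\mathcal M_B(\Omega)\cong_{\rm aff}\mathcal K$, which gives~\eqref{eq:F4-universality}.

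The geometric properties follow as in the proof of Theorem~\ref{thm:symbolic}(2),(3), which used only $\supp=\Sat(K_\Omega)$ and the coding, not unique ergodicity. Every $S\in L$ has a translate of its hull in $K_\Omega$, so by Lemma~\ref{lem:embedding} the hull is exactly $4$-regular and $S$ is a Cantor set. The map $x\mapsto\partial T_x$ injects the Cantor space $\Omega$ into $L$, so $L$ is uncountable. Non-rationality of any nonzero carried $\mu$ follows because $\supp(\mu)=L$ is uncountable, and Lemma~\ref{lem:rational-countable} says rational currents have countable support.

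For the last assertion, $\mathcal M_A(L)$ is a compact convex base of $\SC_{F_4}(L)$, as noted after Definition~\ref{def:normalized-current-base}. Each nonzero ray meets it in exactly one point, so extreme rays of the cone correspond bijectively to extreme points of the base. Affine homeomorphisms preserve extreme points, so the ergodic rays correspond exactly to the extreme points of $\mathcal K$.

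The main point requiring care is the bookkeeping in the identification $\mathcal M_B(\Omega)\cong_{\rm aff}\mathcal M_A(L)$. The normalization in Definition~\ref{def:normalized-current-base} is total mass of $t^{-1}(\mu)$ on $\calT_1(X)$. Under Proposition~\ref{prop:measure-correspondence} this total mass equals $\lambda(\Omega)$, since $\Phi_*\lambda$ is supported on $K_\Omega\subseteq\calT_1(X)$. Hence the linear cone isomorphism restricts to an affine homeomorphism between probability measures and the normalized base. Continuity in both directions comes from Proposition~\ref{prop:general-measure-cone} together with compactness of both sets.
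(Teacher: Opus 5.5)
Your proposal is correct and follows essentially the same route as the paper's proof. Like the paper, you take Downarowicz's Toeplitz model, replaced by Fibonacci in the degenerate finite/singleton case. You pull it back to $F(u,v)$ and set $L=\Sat(K_{\Omega_Y})$. You then use Proposition~\ref{prop:general-measure-cone} with \eqref{eq:normalized-cone-correspondence} for the simplex and minimality, Lemma~\ref{lem:embedding} for Cantor leaves, injectivity of $x\mapsto\partial T_x$ for uncountability, and Lemma~\ref{lem:rational-countable} for non-rationality. Your added bookkeeping on the normalization $t^{-1}(\mu)(\calT_1(X))=\lambda(\Omega)$, and on extreme rays versus extreme points of the compact base, just makes explicit what the paper leaves implicit.
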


\begin{proof}
Downarowicz~\cite[Theorem~1]{Dow91} proved that every nonempty metrizable Choquet simplex is affinely homeomorphic to the simplex of invariant probability measures of a minimal $0$--$1$ Toeplitz flow.  Choose such a binary minimal subshift
\[
 Y\subseteq\{0,1\}^{\Z}
\]
with invariant-measure simplex affinely homeomorphic to $\mathcal K$.  If $\mathcal K$ is a singleton and the chosen model is finite, replace it by the Fibonacci subshift; the replacement preserves the required simplex and makes the system infinite.

Use the pullback $\Omega_Y$ of~\eqref{eq:pullback-subshift}.  Proposition~\ref{prop:Fibonacci} identifies its invariant-measure simplex with that of $Y$:
\[
 \mathcal M_B(\Omega_Y)\cong_{\rm aff}\mathcal M_{\mathbb Z}(Y).
\]
Apply the tree coding of Section~3 and put
\[
 L:=\Sat(K_{\Omega_Y}).
\]
Proposition~\ref{prop:general-measure-cone} and~\eqref{eq:normalized-cone-correspondence} give~\eqref{eq:F4-universality}, and minimality of $Y$ gives minimality of $L$.

Every member of $L$ is the boundary of an exactly $4$-regular tree, hence a Cantor set.  Since $Y$ is infinite minimal and $y\mapsto\partial T_{x_y}$ is injective, $L$ is uncountable.  Minimality gives full support for every nonzero carried current, so Lemma~\ref{lem:rational-countable} gives non-rationality.  Extreme points correspond under the affine homeomorphism.
\end{proof}

\begin{proof}[Proof of Theorem~\ref{mainthm:universality}]
Let $G$ and $\mathcal K$ be as in the theorem.  By Theorem~\ref{thm:KL-malnormal}, choose a malnormal quasiconvex subgroup $H\cong F_4$, and transport to $H$ the lamination supplied by Theorem~\ref{thm:F4-universality}.  Proposition~\ref{prop:induction-cone} identifies its carried-current cone linearly with that of the induced lamination in $G$; hence the image of the normalized compact base is a compact convex base affinely homeomorphic to $\mathcal K$, with the same extreme points.  Corollary~\ref{cor:induction-consequences} and minimality give uncountability, Cantor leaves, genuine branching, full support of every nonzero carried current, and non-rationality.  These conclusions prove all parts of the theorem.
\end{proof}

\begin{cor}[Hyperbolic-group universality]\label{cor:hyperbolic-universality}
Let $G$ be any non-elementary torsion-free hyperbolic group and let $\mathcal K$ be a nonempty metrizable Choquet simplex.  Then there exists an uncountable minimal genuinely branching subset lamination $L\subseteq\calH(\partial G)$ such that the cone $\SC_G(L)$ has a compact convex base affinely homeomorphic to $\mathcal K$.
\end{cor}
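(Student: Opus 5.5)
The plan is to repeat the proof of Theorem~\ref{mainthm:universality} with $G$ replaced by an arbitrary non-elementary torsion-free hyperbolic group. That proof used only hyperbolicity, torsion-freeness, Theorem~\ref{thm:KL-malnormal}, and malnormal quasiconvex induction; the free or surface hypothesis entered only through non-rationality, which the corollary does not claim.

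\textbf{Step 1: the model on $F_4$.} Apply Theorem~\ref{thm:F4-universality} to $\mathcal K$. This gives an uncountable minimal subset lamination $L_4\subseteq\calH(\partial F_4)$ with these properties:
\begin{enumerate}[(i)]
\item every leaf is a Cantor set;
\item $\mathcal M_A(L_4)\cong_{\rm aff}\mathcal K$, and this set is a compact convex base of $\SC_{F_4}(L_4)$ by the remark after Definition~\ref{def:normalized-current-base}.
\end{enumerate}

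\textbf{Step 2: transport to $H$.} By Theorem~\ref{thm:KL-malnormal}, choose a malnormal quasiconvex subgroup $H\le G$ with $H\cong F_4$. Fix an isomorphism $F_4\to H$. Its boundary extension $\partial F_4\to\partial H\cong\Lambda H\subseteq\partial G$ is an equivariant homeomorphism. It therefore carries $L_4$ to a closed, $H$-invariant, minimal set $L\subseteq\calH(\Lambda H)$ of Cantor sets. It also identifies $\SC_{F_4}(L_4)$ linearly and homeomorphically with $\SC_H(L)$. Here I would note that $\calH(\Lambda H)$, topologized inside $\calH(\partial G)$, agrees with $\calH(\partial H)$, because the boundary embedding is a homeomorphism onto the closed set $\Lambda H$.

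\textbf{Step 3: induce to $G$.} Set $\widehat L=\bigsqcup_{gH} gL$. Proposition~\ref{prop:induction-cone} says $\widehat L$ is closed, $G$-invariant and minimal, and that $\iota_H^G:\SC_H(L)\to\SC_G(\widehat L)$ is an $\R_{\ge0}$-linear bijection whose inverse is restriction.

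$\widehat L$ is uncountable because it contains $L$. It is genuinely branching because each leaf is a homeomorphic translate of a Cantor set, hence not a two-point set.

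The image under $\iota_H^G$ of the compact convex base from Step~1 meets every nonzero ray exactly once, because $\iota_H^G$ is a linear bijection of cones. It is convex for the same reason. It is affinely isomorphic to $\mathcal K$.

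\textbf{Main obstacle: topology of the base.} Sasaki's map $\iota_H^G$ is continuous, and the image base is compact, so $\iota_H^G$ restricted to it is a continuous bijection from a compact space onto a Hausdorff space, hence a homeomorphism. This means only continuity of $\iota_H^G$ in the weak-$*$ topology is needed, not continuity of the restriction inverse. I would check that Sasaki's continuity statement \cite[Theorem~4.3]{Sas22} applies to arbitrary quasiconvex $H$ in a general hyperbolic $G$, and not only in the surface setting. If it does not, I would prove continuity directly. For a compactly supported test function on $\calH(\partial G)$, Lemma~\ref{lem:carrier-local-finite} says its support meets only finitely many carriers $\calH(g\Lambda H)$. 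So the pairing with $\iota_H^G(\mu)$ is a finite sum of pairings of $\mu$ with translates of the test function, and each of these depends continuously on $\mu$. This would finish the proof.
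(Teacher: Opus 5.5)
Your proposal is correct and follows essentially the same route as the paper, which proves the corollary by rerunning the proof of Theorem~\ref{mainthm:universality}: take a Kielak--Linton malnormal quasiconvex $H\cong F_4$, transport the lamination from Theorem~\ref{thm:F4-universality}, and apply Proposition~\ref{prop:induction-cone}, noting that the free or surface hypothesis is needed only for non-rationality. Your compact-to-Hausdorff argument for the topology of the image base, with the fallback continuity check via Lemma~\ref{lem:carrier-local-finite}, just makes explicit a step the paper leaves implicit.
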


\begin{proof}
The proof of Theorem~\ref{mainthm:universality} uses only Theorem~\ref{thm:KL-malnormal}, quasiconvex induction, and the $F_4$ universality theorem.  The special hypotheses on $G$ were used only for the usual non-rationality conclusion.
\end{proof}

\section{Raw finite-subtree complexity}\label{sec:complexity}

\begin{conv}[Free-group complexity notation]\label{conv:complexity}
Throughout Sections~\ref{sec:complexity} and~\ref{sec:switch}, $F_N=F(A)$ denotes a nonabelian free group of rank $N\ge2$ with fixed free basis $A$, and $X_A=\Cay(F_N,A)$.  For a subset lamination $L$ carrying a nonzero subset current, the abbreviations $\SC(L)$, $V(L)$, and $d(L)$ are as in Definition~\ref{def:carried-current-cone}.  All finite-subtree cylinders and weights use the marking $A$ unless otherwise specified.
\end{conv}

\begin{defn}[Finite-subtree complexity]\label{def:subtree-complexity}
For a nonempty subset lamination $L$ carrying a nonzero subset current and $n\ge1$, put
\[
 p_L(n):=\#\{[K]_{F_N}:|E(K)|=n,\ \operatorname{SCyl}_A(K)\cap L\ne\varnothing\}.
\]
Here $K\subseteq X_A$ ranges over finite nondegenerate subtrees, and $|E(K)|$ counts unoriented edges.  If $L$ is minimal, $p_L(n)$ is equivalently the number of $n$-edge translation classes having positive weight for any nonzero current carried by $L$.
\end{defn}

The pullback examples show that the natural function $p_L$ is too coarse to detect support-unique ergodicity.

\begin{defn}[Word complexity]\label{def:word-complexity}
For a subshift $Y\subseteq\mathcal A^{\Z}$ over a finite alphabet $\mathcal A$, its \emph{word-complexity function} is
\[
 P_Y(\ell):=\#\{\text{words of length $\ell$ occurring in }Y\},
 \qquad \ell\ge1.
\]
\end{defn}

\begin{conv}[Pullback complexity family]\label{conv:pullback-complexity}
Let $Y\subseteq\{0,1\}^{\Z}$ be an infinite minimal subshift, with word-complexity function $P_Y$ as in Definition~\ref{def:word-complexity}.  Use the pullback $\Omega_Y$ of~\eqref{eq:pullback-subshift} and the tree coding of Section~3 in $F_4$, and write $L_Y=\Sat(K_{\Omega_Y})$.  When $Y=Y_{\rm Fib}$ is the Fibonacci subshift, write $P_{\rm Fib}:=P_{Y_{\rm Fib}}$ and $L_{\rm Fib}:=L_{Y_{\rm Fib}}$.  Put $B=F(u,v)$ and $X_B=\Cay(B,\{u,v\})$.  For $m\ge1$, let $\mathcal C_m$ be the set of $B$-translation classes of connected $m$-vertex subtrees of $X_B$, and for such a core put
\[
 \rho(C)=\max_{w\in C}\pi(w)-\min_{w\in C}\pi(w).
\]
Let $c_m=|\mathcal C_m|$, with $c_1=1$.  By Proposition~\ref{prop:general-measure-cone}, $L_Y$ is minimal and its normalized carried-current base is affinely homeomorphic to the invariant-measure simplex of $Y$.
\end{conv}

To count cores, let $f=z(1+f)^3$ be the generating function for a branch across a prescribed oriented edge.  Then $(1+f)^4$ counts finite subtrees containing a fixed root by number of edges.  Lagrange inversion, followed by division by $m$ because an $m$-vertex translation class has exactly $m$ translates containing the root, gives, for $m\ge2$,
\begin{equation}
 c_m=\frac{4}{m(m-1)}\binom{3m}{m-2}
 \asymp m^{-5/2}\left(\frac{27}{4}\right)^m.
 \label{eq:core-count}
\end{equation}

Apart from single-edge patterns, every finite subtree whose cylinder meets $L_Y$ has a nonempty core $C$ consisting of its nonterminal vertices.  This core is connected, and each of its vertices is locally saturated in the ambient exactly $4$-regular tree.  If $C$ has $m$ vertices, it has $m-1$ internal edges, and saturation adds
\[
 4m-2(m-1)=2m+2
\]
terminal edges.  Thus the saturated pattern has $3m+1$ edges.  Conversely, it is obtained from its core by this saturation procedure.  For a fixed core $C$, the labels on the saturated pattern are determined exactly by the coordinates from one level below $\min\pi(C)$ through $\max\pi(C)$.  Since $\pi(C)$ is an interval of integers and the labelled tree coding is recoverable, distinct allowed blocks on this interval give distinct translation classes, and every allowed block occurs.  Hence a fixed core contributes exactly $P_Y(\rho(C)+2)$ translation classes, and $p_{L_Y}(n)=0$ unless $n\equiv1\pmod3$.

\begin{prop}[Raw growth does not force unique ergodicity]\label{prop:raw-not-UE}
Use the notation of Convention~\ref{conv:pullback-complexity}.  If $Y$ has zero entropy, equivalently $P_Y(\ell)=e^{o(\ell)}$, then
\begin{equation}
 \lim_{m\to\infty}p_{L_Y}(3m+1)^{1/(3m+1)}
 =\left(\frac{27}{4}\right)^{1/3}.
 \label{eq:zero-entropy-base}
\end{equation}
For every integer $d\ge2$ there is such a minimal $Y$ for which $L_Y$ carries exactly $d$ ergodic rays and
\[
 p_{L_Y}(3m+1)\asymp p_{L_{\rm Fib}}(3m+1).
\]
Thus even the growth class of $p_L$ up to multiplicative constants does not force a projectively unique carried current.
\end{prop}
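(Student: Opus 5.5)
The proof has two parts: the limit formula for zero-entropy $Y$, and the $d$-ergodic construction. Both rest on the core decomposition just before the statement:
\[
 p_{L_Y}(3m+1)=\sum_{C\in\mathcal C_m}P_Y(\rho(C)+2),
\]
valid for $m\ge1$ (the single-edge patterns affect only $n=1$).

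For~\eqref{eq:zero-entropy-base}, bound this sum from both sides.
\begin{itemize}
\item Lower bound: every allowed block length has $P_Y\ge1$, so $p_{L_Y}(3m+1)\ge c_m$.
\item Upper bound: $\rho(C)\le m-1$ because $C$ is connected with $m$ vertices, and $P_Y$ is nondecreasing. Hence $p_{L_Y}(3m+1)\le c_m\,P_Y(m+1)$.
\end{itemize}
Zero entropy gives $P_Y(m+1)^{1/m}\to1$, and~\eqref{eq:core-count} gives $c_m^{1/m}\to 27/4$. Taking $(3m+1)$-th roots yields the limit $(27/4)^{1/3}$. This part is routine.

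For the second claim, I would take $Y$ to be a minimal binary subshift with exactly $d$ ergodic measures and linear word complexity, for instance $P_Y(\ell)\le K\ell$. Such subshifts are classical: minimal Toeplitz or $S$-adic, or interval-exchange codings with $d$ ergodic measures, recoded into a binary alphabet. One can also cite the low-complexity realization results the paper mentions for Choquet simplices, applied to the $(d-1)$-simplex. By Proposition~\ref{prop:Fibonacci} and Proposition~\ref{prop:general-measure-cone}, $\mathcal M_A(L_Y)$ is a $(d-1)$-simplex. So $L_Y$ is minimal and carries exactly $d$ ergodic rays, the extreme points.

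It remains to compare growth with Fibonacci. Sturmian complexity is $P_{\rm Fib}(\ell)=\ell+1$, and any aperiodic $Y$ has $P_Y(\ell)\ge\ell+1$ by Morse--Hedlund. Together with $P_Y(\ell)\le K\ell$, this gives
\[
 P_{\rm Fib}(\ell)\le P_Y(\ell)\le K' P_{\rm Fib}(\ell)
\]
for all $\ell\ge1$. Summing termwise over the same cores in the displayed formula gives
\[
 p_{L_{\rm Fib}}(3m+1)\le p_{L_Y}(3m+1)\le K'p_{L_{\rm Fib}}(3m+1),
\]
which is the asserted $\asymp$. The final sentence of the statement follows because $L_{\rm Fib}$ is support-uniquely ergodic, while $L_Y$ has the same raw growth up to constants but $d\ge2$ ergodic rays.

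The main obstacle is citing a clean existence result: a minimal binary subshift with exactly $d$ ergodic measures and complexity $O(\ell)$. I would first try Boshernitzan-type bounds in reverse, which show that linear complexity allows finitely many ergodic measures, combined with explicit constructions. Examples are Keane's non-uniquely-ergodic interval exchanges coded into two letters by a sliding block code, which preserves linear complexity up to constants, or the Cyr--Kra and Downarowicz-type low-complexity Toeplitz realizations of finite simplices. If only $P_Y(\ell)=O(\ell)$ holds along a tail, the finitely many small $\ell$ are absorbed into the constant.
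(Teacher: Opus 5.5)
Your argument is correct and follows the paper's proof. The paper bounds the core sum by $c_m\le p_{L_Y}(3m+1)\le c_m e^{o(m)}$. For the second part it takes a minimal linear-complexity subshift with exactly $d$ ergodic measures from Cyr--Kra~\cite[Theorem~1.3]{CK19} and makes it binary with the injective constant-length code of Lemma~\ref{lem:binary-recoding}. It then concludes with Morse--Hedlund and termwise comparison of the core sums, as you do.

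Two of your alternative sources would not work as stated:
\begin{itemize}
\item The Cecchi Bernales--Donoso simplex realization gives only $o(b_\ell)$ for a prescribed superlinear $b_\ell$, not $O(\ell)$. Downarowicz's theorem gives no complexity control at all.
\item For the IET route, an arbitrary sliding block code to two letters is only a factor map and can collapse ergodic measures. You need an injective recoding, such as that constant-length code.
\end{itemize}
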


\begin{proof}
Summing the core contributions gives
\[
 p_{L_Y}(3m+1)=\sum_{[C]\in\mathcal C_m}P_Y(\rho(C)+2).
\]
Since $0\le\rho(C)\le m-1$, zero entropy gives
\[
 c_m\le p_{L_Y}(3m+1)\le c_m e^{o(m)},
\]
and~\eqref{eq:zero-entropy-base} follows from~\eqref{eq:core-count}.

For the second assertion, Cyr--Kra~\cite[Theorem~1.3]{CK19} construct, for every $d>1$, a minimal finite-alphabet subshift $Z$ with exactly $d$ ergodic probability measures and linear word complexity.  Apply the synchronizing binary recoding of Lemma~\ref{lem:binary-recoding} below.  The resulting minimal binary subshift $Y$ has an affinely homeomorphic invariant-measure simplex and still satisfies $P_Y(\ell)=O(\ell)$, so it has exactly $d$ ergodic measures and zero entropy.  Proposition~\ref{prop:general-measure-cone} transfers these ergodic measures to the ergodic rays carried by $L_Y$.  Since $Y$ is infinite and minimal, Morse--Hedlund gives $P_Y(\ell)\ge \ell+1$; hence $P_Y(\ell)\asymp \ell+1=P_{\rm Fib}(\ell)$, and the displayed core sums are termwise comparable.
\end{proof}

\section{Switch complexity and the dimension of the carried-current cone}\label{sec:switch}

\begin{conv}[Switch-complexity setup]\label{conv:switch}
Convention~\ref{conv:complexity} remains in force throughout this section.  Unless explicitly stated otherwise, $L\subseteq\calH(\partial F_N)$ denotes a nonempty subset lamination carrying a nonzero subset current, so $\SC(L)\ne\{0\}$.  Round graphs, semi-round graphs, their grades, $e$-children, and the associated switch equations are used in the sense of~\cite[Sections~3--4]{Kap17}; we use $n$ for the grade/radius parameter, denoted $r$ in~\cite{Kap17}; the basic weight formalism was recalled in Subsection~\ref{subsec:free-rooted}.  Minimality of $L$ is an additional hypothesis and will always be stated when needed.
\end{conv}

We refine raw complexity by retaining the switch relations among round-graph weights.  Finite-level nullity gives a sufficient condition for finite-dimensionality; the inverse limit gives the exact dimension.

\subsection{Support-restricted round graphs and switch nullity}

Under Convention~\ref{conv:switch}, we use the round and semi-round graphs of~\cite[Sections~3--4]{Kap17}.  For $n\ge1$, let $\mathcal B_n$ be the finite set of $F_N$-translation classes of round graphs of grade $n$ in $X_A$; for $n\ge2$, let $\mathcal J_n$ be the corresponding set of translation classes of semi-round graphs of grade $n$.

\begin{defn}[Round language and switch-nullity complexity]\label{def:round-switch-complexity}
\begin{enumerate}[(1)]
\item The \emph{allowed round language} of $L$ at grade $n$ is
\[
 \mathcal R_n(L):=
 \{[K]\in\mathcal B_n:\operatorname{SCyl}_A(K)\cap L\ne\varnothing\},
\]
and the \emph{round-complexity function} is
\[
 R_L(n):=|\mathcal R_n(L)|.
\]
\item For $n\ge2$, let
\[
 D_n(L):\R^{\mathcal R_n(L)}\longrightarrow\R^{\mathcal J_n}
\]
be the support-restricted switch operator.  Orient each central edge by its label in $A$; the $[J]$-coordinate is the sum of the weights of allowed completions centered at its origin minus the sum for completions centered at its terminus.  Equivalently, $D_n(L)$ is obtained from the switch matrix of~\cite[Definition~4.1]{Kap17} by deleting the columns corresponding to forbidden round graphs.  Put
\[
 W_n(L):=\ker D_n(L),\qquad
 q_L(n):=\dim W_n(L).
\]
The function $q_L$ is the \emph{switch-nullity complexity} of $L$.
\end{enumerate}
\end{defn}

\begin{rem}[Round complexity of the full subset lamination]\label{rem:full-support-round-complexity}
For $L=L_{\mathrm{full}}(F_N)=\calH(\partial F_N)$, put $t_n=(2N-1)^{n-1}$.  A grade-$n$ round graph centered at $1$ is determined by a subset of the sphere of radius $n$ meeting at least two of the $2N$ initial directions.  Each direction contains $t_n$ sphere vertices, so
\[
 R_L(n)=2^{2Nt_n}-2N\,2^{t_n}+(2N-1).
\]
The subtraction removes the empty selection and the selections contained in one initial direction.  Thus $\log_2 R_L(n)\sim2N(2N-1)^{n-1}$, and $R_L$ grows doubly exponentially.
\end{rem}

The two collections of round completions entering $D_n(L)$ are the completion collections from~\cite[Proposition~3.8]{Kap17}; rows which become zero after the support restriction are harmless.

Every $\mu\in\SC(L)$ determines a vector
\[
 \omega_n(\mu):=
 \big(\langle K,\mu\rangle_A\big)_{[K]\in\mathcal R_n(L)}
 \in W_n(L).
\]
For signed elements of $V(L)$ this definition extends linearly.

\begin{thm}[Switch-nullity bound]\label{thm:switch-nullity}
The following hold.
\begin{enumerate}[(1)]
\item One has
\[
 d(L)\le\liminf_{n\to\infty}q_L(n).
\]
\item If there is an integer $M\ge1$ such that $q_L(n)\le M$ for arbitrarily large $n$, then
\[
 d(L)\le M.
\]
\item If $L$ is minimal and $q_L(n)=1$ for arbitrarily large $n$, then $L$ carries a projectively unique nonzero subset current.
\end{enumerate}
\end{thm}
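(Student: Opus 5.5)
The plan is to show that for each fixed $n\ge2$ the linear map
\[
 \omega_n:V(L)\longrightarrow W_n(L)
\]
becomes injective on any given finite-dimensional subspace of $V(L)$ once $n$ is large.  All three parts then follow from this.

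First, $\omega_n$ lands in $W_n(L)$.  For $\mu\in\SC(L)$, every forbidden round graph $K$ has $\operatorname{SCyl}_A(K)\cap L=\varnothing$, so $\langle K,\mu\rangle_A=0$ because $\supp(\mu)\subseteq L$.  Hence the full round-weight vector of $\mu$ vanishes off $\mathcal R_n(L)$.  By~\cite[Proposition~3.8]{Kap17}, that full vector satisfies the switch equations, which are the Kirchhoff splitting identities of~\cite[Proposition~3.11]{KN13}: both sums compute the weight of the semi-round graph.  Deleting the zero columns shows $\omega_n(\mu)\in\ker D_n(L)$, and linearity extends this to $V(L)$.

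Second, the key step is eventual injectivity.  Take linearly independent $\mu_1,\dots,\mu_k\in\SC(L)$.  Suppose that for infinitely many $n$ there is a nonzero combination $\sum c_i^{(n)}\mu_i$ in $\ker\omega_n$.  Normalize so that $\|c^{(n)}\|=1$ and pass to a convergent subsequence $c^{(n)}\to c\neq0$.  Grade-$n$ round weights determine all finite-subtree weights of radius at most $n$ through the splitting formulas.  So if a signed current has vanishing grade-$n$ round weights, all its $K$-weights with radius at most $n$ vanish, and then the same holds at every grade $m\le n$.  In particular the kernels $\ker\omega_n$ are decreasing in $n$.  Therefore for each fixed subtree $K$, $\langle K,\sum c_i^{(n)}\mu_i\rangle=0$ for large $n$, and in the limit $\langle K,\sum c_i\mu_i\rangle=0$.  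Since $\sum c_i\mu_i$ is a signed combination of locally finite measures, and cylinders form a clopen basis closed under finite intersections up to disjoint unions, vanishing on all cylinders forces $\sum c_i\mu_i=0$.  This contradicts linear independence.

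The monotonicity of the kernels actually removes the need for the compactness argument: $\bigcap_n\ker\omega_n=0$ on the finite-dimensional span, and a decreasing chain of subspaces stabilizes.  Hence $\omega_n$ is injective on $\operatorname{span}(\mu_1,\dots,\mu_k)$ for all large $n$, which gives $k\le q_L(n)$ for large $n$.  Taking $k$ arbitrary (and $k\le d(L)$) proves (1), and (2) is immediate from it.  I expect the main obstacle to be justifying carefully that grade-$n$ round weights determine lower-grade weights in this support-restricted signed setting.  That should be a formal consequence of finite additivity of cylinders, since each lower-radius cylinder is a disjoint union of grade-$n$ round cylinders.

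For (3), part (2) gives $d(L)\le1$.  Since $L$ carries a nonzero current, $\SC(L)$ is a single ray.  Minimality is used only to fit the support-uniquely-ergodic phrasing: every nonzero carried current has support $L$.
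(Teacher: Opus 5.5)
Your proposal is correct and follows essentially the same route as the paper. Both arguments show that the kernels $\ker\omega_n$ on $V(L)$ decrease in $n$ with zero intersection, so $\omega_n$ is eventually injective on any finite-dimensional subspace, giving $\dim E\le q_L(n)$ for large $n$; parts (2) and (3) then follow immediately. As you note yourself, your preliminary compactness argument is superfluous, and the check that $\omega_n$ lands in $W_n(L)$ is recorded in the paper just before the theorem.
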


\begin{proof}
For $s\ge n$, the weights of all grade-$s$ round graphs determine, by the splitting formula of~\cite[Corollary~3.5]{Kap17}, the weights of every finite subtree of radius at most $s$, and in particular all grade-$n$ round-graph weights.  Hence on $V(L)$
\[
 \ker\omega_s\subseteq\ker\omega_n
 \qquad(s\ge n).
\]
Moreover,
\[
 \bigcap_{n\ge2}\ker\omega_n=\{0\},
\]
because the round-graph weights of all grades determine all finite-subtree weights, and finite-subtree weights determine a subset current.

Let $E\subseteq V(L)$ be finite-dimensional.  The subspaces
\[
 E\cap\ker\omega_2\supseteq E\cap\ker\omega_3\supseteq\cdots
\]
form a descending chain with zero intersection.  Hence for all sufficiently large $n$,
\[
 E\cap\ker\omega_n=\{0\}.
\]
Thus $\omega_n|_E$ is injective and
\[
 \dim E\le\dim W_n(L)=q_L(n)
\]
for all sufficiently large $n$.  Taking the liminf and then taking the supremum over finite-dimensional subspaces $E\subseteq V(L)$ proves part~(1).  Part~(2) is immediate from part~(1).  Under the hypotheses of part~(3), part~(1) gives $d(L)\le1$; since $\SC(L)\ne\{0\}$, the carried-current cone is a single nonzero ray.
\end{proof}

By rank-nullity,
\begin{equation}
 q_L(n)=|\mathcal R_n(L)|-\operatorname{rank}D_n(L).
 \label{eq:q-rank}
\end{equation}
\subsection{Transition graphs and the round extension excess}

For minimal $L$, an incidence graph inside the switch system forces a large part of $\operatorname{rank}D_n(L)$, yielding a branching analogue of the Rauzy-graph bound.

For $n\ge2$ and $[K]\in\mathcal R_n(L)$, let $c$ be the center of a representative $K$ and put
\[
 K^-:=K\cap\operatorname{Ball}_{X_A}(c,n-1).
\]
Then $K^-$ is a round graph of grade $n-1$, and its translation class depends only on $[K]$.  Thus there is a parent map
\[
 \operatorname{par}_n:\mathcal R_n(L)\longrightarrow\mathcal R_{n-1}(L),
 \qquad
 \operatorname{par}_n([K])=[K^-].
\]
The map $\operatorname{par}_n$ is surjective: an allowed grade-$(n-1)$ round graph occurs as the radius-$(n-1)$ ball about a vertex in a leafless infinite tree representing a point of $L$, and taking the radius-$n$ ball about the same vertex gives an allowed extension.

\begin{defn}[Round extension excess]\label{def:extension-excess}
For $n\ge2$, the \emph{round extension excess} of $L$ at grade $n$ is
\[
 E_L(n):=R_L(n)-R_L(n-1).
\]
By surjectivity of the parent map, $E_L(n)\ge0$.
\end{defn}

If
\[
 e_n(U):=|\operatorname{par}_n^{-1}(U)|
 \qquad(U\in\mathcal R_{n-1}(L)),
\]
then surjectivity of $\operatorname{par}_n$ gives the useful interpretation
\[
 E_L(n)=
 \sum_{U\in\mathcal R_{n-1}(L)}\bigl(e_n(U)-1\bigr).
\]

Let $\mathcal S_n(L)\subseteq\mathcal J_n$ consist of those semi-round classes which occur as an $e$-child, in the sense of~\cite[Definition~3.7]{Kap17}, of some allowed grade-$n$ round graph.  For $[J]\in\mathcal S_n(L)$ choose a representative centered at the midpoint of a positively oriented edge $e$.  By the definition of a semi-round graph, the two truncations
\[
 J\cap\operatorname{Ball}_{X_A}(o(e),n-1),
 \qquad
 J\cap\operatorname{Ball}_{X_A}(t(e),n-1)
\]
are round graphs of grade $n-1$.  Their translation classes depend only on the oriented class of $J$; denote them by
\[
 \partial_0[J],\ \partial_1[J]\in\mathcal R_{n-1}(L).
\]
Define the finite transition multigraph $\mathcal G_n(L)$ to have vertex set $\mathcal R_{n-1}(L)$ and one edge $[J]$ joining $\partial_0[J]$ to $\partial_1[J]$ for each $[J]\in\mathcal S_n(L)$.  Loops and multiple edges are allowed.

\begin{lem}[Connectivity of the transition graph]\label{lem:transition-connected}
If $L$ is minimal, then $\mathcal G_n(L)$ is connected for every $n\ge2$.
\end{lem}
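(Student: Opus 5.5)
The plan is to show that any two vertices of $\mathcal G_n(L)$ are joined by a path of edges, by realizing both inside one leaf and walking between them along that leaf's convex hull. Fix $U,U'\in\mathcal R_{n-1}(L)$. Because $L$ is minimal, every $G$-orbit is dense. The allowedness of $U'$ means $\operatorname{SCyl}_A(K')\cap L\ne\varnothing$ for a representative $K'$ of $U'$. That cylinder is an open set meeting $L$, so the orbit of any leaf meets it.

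First step. Choose $S\in L$ and a vertex $c\in T:=\Hull_{X_A}(S)$ such that
\[
T\cap\operatorname{Ball}_{X_A}(c,n-1)
\]
represents $U$. This uses the description of allowed round graphs as radius-$(n-1)$ balls in hulls of leaves, as in the surjectivity argument for $\operatorname{par}_n$. By density of the orbit $F_N S$ in $L$, some translate $gS$ lies in the cylinder of $K'$, where $K'$ is centered at $1$. Translating back, $T$ contains a vertex $c'=g^{-1}$ such that
\[
T\cap\operatorname{Ball}_{X_A}(c',n-1)
\]
represents $U'$. Here I use the stated characterization: $\partial T\in\operatorname{SCyl}_A(K)$ iff $K\subseteq T$ and the nonterminal vertices of $K$ have the same incident edges in $K$ and in $T$. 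For a round graph whose terminal vertices all lie at radius $n-1$ from the center, this says precisely that $K$ equals the radius-$(n-1)$ ball of $T$ about its center.

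Second step. Walk along the geodesic $c=v_0,v_1,\dots,v_k=c'$ in $T$; it lies in $T$ because $T$ is a subtree. For each edge $e=[v_{i},v_{i+1}]$, let $J_i$ be the semi-round graph of grade $n$ that $T$ cuts out around the midpoint of $e$: the union of $T\cap\operatorname{Ball}(v_i,n-1)$ and $T\cap\operatorname{Ball}(v_{i+1},n-1)$, or whatever precise definition \cite[Definition~3.7]{Kap17} gives. Its two truncations are the classes of the radius-$(n-1)$ balls of $T$ at $v_i$ and at $v_{i+1}$. The grade-$n$ round graph $T\cap\operatorname{Ball}(v_i,n)$ is allowed, since $\partial T=S\in L$, and $J_i$ is its $e$-child. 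Hence $[J_i]\in\mathcal S_n(L)$, and it joins the vertex classes at $v_i$ and $v_{i+1}$ in $\mathcal G_n(L)$. If $e$ is negatively oriented, the same edge is used with its endpoints swapped, which does not affect connectivity. Concatenating these edges gives a path from $U$ to $U'$.

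Main obstacle. The main risk is matching the conventions of \cite{Kap17}. I need to check that the $e$-child of the ball $T\cap\operatorname{Ball}(v_i,n)$ really is the semi-round graph cut out of $T$ around $e$, and that its truncations at $o(e)$ and $t(e)$ are the honest radius-$(n-1)$ balls of $T$. For this I will verify that the $e$-child consists of the points of the ball within distance $n-1$ of $o(e)$ or of $t(e)$. Then both truncations are just intersections of $T$ with balls, so each is determined by $T$ and by which vertex is taken as center.
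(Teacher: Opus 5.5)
Your proposal is correct and follows essentially the same route as the paper. You use density of orbits in the minimal lamination to realize $U$ and $U'$ as radius-$(n-1)$ patterns at two vertices of a single hull $T$, then walk the geodesic in $T$ between them, taking at each edge the allowed grade-$n$ semi-round pattern (an $e$-child of an allowed round ball of $T$) as an edge of $\mathcal G_n(L)$. Your extra checks, that the cylinder condition for a round graph means equality with the corresponding ball of $T$ and that the truncations of the $e$-child are the actual radius-$(n-1)$ balls of $T$, are precisely the convention-level points the paper leaves implicit.
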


\begin{proof}
Let $U,U'\in\mathcal R_{n-1}(L)$.  Choose representatives centered at $1\in F_N$.  Since $U$ is allowed, there is $S\in L$ such that the rooted tree $T=\Hull_X(S)$ contains $1$ and has grade-$(n-1)$ round pattern $U$ at $1$.  The relative cylinder in $L$ determined by $U'$ is a nonempty open subset of $L$.  Minimality therefore gives $g\in F_N$ such that $gS$ lies in this cylinder.  In particular $1\in\Hull_X(gS)=gT$, so $g^{-1}\in T$, and the grade-$(n-1)$ pattern of $T$ at $g^{-1}$ is $U'$.

Follow the unique edge path in $T$ from $1$ to $g^{-1}$.  The consecutive grade-$(n-1)$ rooted patterns along this path are joined by the grade-$n$ semi-round pattern centered at the midpoint of the intervening edge.  Every such semi-round pattern is allowed.  Hence this path gives a path from $U$ to $U'$ in $\mathcal G_n(L)$.
\end{proof}

\begin{lem}[Weighted incidence identity]\label{lem:weighted-incidence}
Fix $n\ge2$.  Suppose that $L$ is minimal, let $0\ne\mu\in\SC(L)$, and for $[K]\in\mathcal R_n(L)$ put
\[
 x_K:=\langle K,\mu\rangle_A.
\]
Define the weighted parent-summing operator
\[
 \mathsf{Par}_n^\mu:\R^{\mathcal R_n(L)}\longrightarrow\R^{\mathcal R_{n-1}(L)},
 \qquad
 (\mathsf{Par}_n^\mu z)_U=
 \sum_{\operatorname{par}_n([K])=U}x_Kz_K.
\]
For an allowed semi-round class $[J]\in\mathcal S_n(L)$, give its central edge $e$ the preceding orientation.  Then
\begin{equation}
 \mathsf{Par}_n^\mu D_n(L)^t\mathbf e_{[J]}
 =\langle J,\mu\rangle_A
 \bigl(\mathbf e_{\partial_0[J]}-\mathbf e_{\partial_1[J]}\bigr).
 \label{eq:weighted-incidence}
\end{equation}
In particular, if $\supp(\mu)=L$, then the scalar $\langle J,\mu\rangle_A$ is positive.
\end{lem}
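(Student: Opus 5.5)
We unwind the definitions of $D_n(L)^t$ and of $\mathsf{Par}_n^\mu$, and then evaluate the resulting sum using the Kirchhoff splitting identities of~\cite[Proposition~3.11]{KN13} (equivalently~\cite[Proposition~3.8]{Kap17}).

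\textbf{Step 1: the vector $D_n(L)^t\mathbf e_{[J]}$.} By Definition~\ref{def:round-switch-complexity}, the $[J]$-row of $D_n(L)$ has entry $+1$ at each allowed grade-$n$ round completion of $J$ centered at $o(e)$. It has entry $-1$ at each allowed completion centered at $t(e)$, and $0$ elsewhere. Write $\mathcal C_0(J)$ and $\mathcal C_1(J)$ for these two collections of allowed round classes. Then
\[
 D_n(L)^t\mathbf e_{[J]}=\sum_{[K]\in\mathcal C_0(J)}\mathbf e_{[K]}-\sum_{[K]\in\mathcal C_1(J)}\mathbf e_{[K]}.
\]
If some round class appears in both collections (a loop-type coincidence), its coefficient is the signed count of occurrences. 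The bookkeeping below handles this linearly, so no separate case is needed.

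\textbf{Step 2: the parents.} Every completion $K\in\mathcal C_0(J)$ is centered at $o(e)$ and contains $J$. Its radius-$(n-1)$ truncation is therefore $J\cap\operatorname{Ball}(o(e),n-1)$, so $\operatorname{par}_n([K])=\partial_0[J]$. Likewise every $K\in\mathcal C_1(J)$ has parent $\partial_1[J]$. Applying $\mathsf{Par}_n^\mu$ to the vector of Step 1 thus gives
\[
 \Bigl(\sum_{\mathcal C_0(J)}x_K\Bigr)\mathbf e_{\partial_0[J]}-\Bigl(\sum_{\mathcal C_1(J)}x_K\Bigr)\mathbf e_{\partial_1[J]}.
\]

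\textbf{Step 3: the two sums equal $\langle J,\mu\rangle_A$.} The cylinder $\operatorname{SCyl}_A(J)$ is the disjoint union of the cylinders of its grade-$n$ round completions centered at $o(e)$. A leafless tree whose hull contains $J$ with the prescribed saturation at the nonterminal vertices determines a unique radius-$n$ ball about $o(e)$. The same decomposition holds at $t(e)$. This is precisely the content of~\cite[Proposition~3.8]{Kap17}. By finite additivity, $\langle J,\mu\rangle_A$ equals the sum over all completions at $o(e)$, and also the sum over all completions at $t(e)$.

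It remains to see that forbidden completions contribute nothing. For such a completion $K$, the cylinder $\operatorname{SCyl}_A(K)$ misses $L\supseteq\supp(\mu)$, so $x_K=0$. Hence both sums in Step 2 equal $\langle J,\mu\rangle_A$, which proves~\eqref{eq:weighted-incidence}.

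\textbf{Positivity.} The class $[J]\in\mathcal S_n(L)$ is an $e$-child of an allowed round graph $K$. Its cylinder therefore contains $\operatorname{SCyl}_A(K)$, which meets $L$. Thus $\operatorname{SCyl}_A(J)$ is an open set meeting $\supp(\mu)=L$, and so $\langle J,\mu\rangle_A>0$.

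\textbf{Main obstacle.} The only delicate point is Step 3: one must check that the completion collections in $D_n(L)$ are exactly the disjoint decompositions of $\operatorname{SCyl}_A(J)$ from~\cite{Kap17}, including the case where $\partial_0[J]=\partial_1[J]$. I expect to verify this directly from the definition of semi-round graphs and $e$-children.
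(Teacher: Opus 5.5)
Your proposal is correct and follows essentially the same route as the paper. Both arguments expand the $[J]$-row of $D_n(L)$ into the two completion collections at $o(e)$ and $t(e)$, note that these have parents $\partial_0[J]$ and $\partial_1[J]$, and use the splitting formula of~\cite[Proposition~3.8]{Kap17} to identify each unsigned sum with $\langle J,\mu\rangle_A$. Two details differ only slightly and are valid: you state explicitly that forbidden completions carry zero weight, and you prove positivity via $\operatorname{SCyl}_A(K)\subseteq\operatorname{SCyl}_A(J)$, where the paper instead says that some allowed completion has positive weight.
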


\begin{proof}
Let $\mathcal E_0(J)$ and $\mathcal E_1(J)$ denote the two finite multisets of allowed grade-$n$ round completions of $J$, centered respectively at $o(e)$ and $t(e)$.  The switch equation of~\cite[Proposition~3.8]{Kap17} uses exactly these multiplicities.  Every $K\in\mathcal E_0(J)$ has parent $\partial_0[J]$, while every $K\in\mathcal E_1(J)$ has parent $\partial_1[J]$.  Consequently
\[
 \mathsf{Par}_n^\mu D_n(L)^t\mathbf e_{[J]}
 =\left(\sum_{K\in\mathcal E_0(J)}x_K\right)\mathbf e_{\partial_0[J]}
 -\left(\sum_{K\in\mathcal E_1(J)}x_K\right)\mathbf e_{\partial_1[J]},
\]
with multiplicities understood; if the parent classes coincide, the contributions add in the same coordinate.

The splitting formula for the semi-round graph $J$ says that each of the two unsigned sums equals $\langle J,\mu\rangle_A$.  The resulting equality proves~\eqref{eq:weighted-incidence}.  If $\supp(\mu)=L$ and $J$ is allowed, then some allowed completion of $J$ has positive $\mu$-weight, and hence the common value $\langle J,\mu\rangle_A$ is positive.
\end{proof}

\begin{prop}[Incidence rank inside the switch system]\label{prop:switch-rank}
Suppose that $L$ is minimal.  Then for every $n\ge2$,
\begin{equation}
 \operatorname{rank}D_n(L)\ge R_L(n-1)-1.
 \label{eq:switch-rank-lower}
\end{equation}
Consequently,
\begin{equation}
 q_L(n)\le E_L(n)+1.
 \label{eq:q-extension}
\end{equation}
\end{prop}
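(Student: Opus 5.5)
We bound the rank of $D_n(L)$ from below by composing it with the weighted parent-summing operator of Lemma~\ref{lem:weighted-incidence}. Since $L$ is minimal and carries a nonzero current, fix $0\ne\mu\in\SC(L)$. The remarks after Definition~\ref{def:SUE} give $\supp(\mu)=L$. Every allowed round graph then has positive weight $x_K>0$, so $\mathsf{Par}_n^\mu$ is a well-defined linear map.

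Consider the composite
\[
 \mathsf{Par}_n^\mu D_n(L)^t:\R^{\mathcal J_n}\longrightarrow\R^{\mathcal R_{n-1}(L)}.
\]
This gives
\[
 \operatorname{rank}D_n(L)=\operatorname{rank}D_n(L)^t\ge\operatorname{rank}\bigl(\mathsf{Par}_n^\mu D_n(L)^t\bigr).
\]
We therefore restrict the composite to the coordinate subspace spanned by $\mathbf e_{[J]}$ for $[J]\in\mathcal S_n(L)$. By~\eqref{eq:weighted-incidence}, the image of $\mathbf e_{[J]}$ is the positive scalar $\langle J,\mu\rangle_A$ times $\mathbf e_{\partial_0[J]}-\mathbf e_{\partial_1[J]}$. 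Rescaling columns by nonzero scalars does not change rank. Hence the rank of the composite is at least the rank of the signed incidence matrix of the multigraph $\mathcal G_n(L)$, with vertex set $\mathcal R_{n-1}(L)$ and one column per edge. Loops give zero columns, and multiple edges repeat columns; neither affects the standard fact used next.

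The standard fact is that the column space of the oriented incidence matrix of a finite graph with $V$ vertices and $c$ connected components has dimension $V-c$. It consists of the vectors whose coordinate sum vanishes on each component. To see this, a spanning forest gives $V-c$ independent columns, and every column lies in that sum-zero subspace. By Lemma~\ref{lem:transition-connected}, $\mathcal G_n(L)$ is connected, so this rank equals $R_L(n-1)-1$. That proves~\eqref{eq:switch-rank-lower}.

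For~\eqref{eq:q-extension}, combine~\eqref{eq:q-rank} with~\eqref{eq:switch-rank-lower}:
\[
 q_L(n)=R_L(n)-\operatorname{rank}D_n(L)\le R_L(n)-R_L(n-1)+1=E_L(n)+1.
\]

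The only delicate point is the bookkeeping of the transpose. We must check that the rows of $D_n(L)$ are indexed by $\mathcal J_n$ and its columns by $\mathcal R_n(L)$, so that $D_n(L)^t\mathbf e_{[J]}$ is exactly the signed completion vector used in Lemma~\ref{lem:weighted-incidence}. Rows outside $\mathcal S_n(L)$ may simply be discarded, since dropping them can only lower the rank of the restriction.
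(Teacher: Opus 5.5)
Your proposal is correct and matches the paper's proof essentially step for step. Like the paper, you compose $D_n(L)^t$ with $\mathsf{Par}_n^\mu$, use Lemma~\ref{lem:weighted-incidence} to read the $\mathcal S_n(L)$-columns as positively rescaled incidence columns of the connected graph $\mathcal G_n(L)$ from Lemma~\ref{lem:transition-connected}, and finish with rank--nullity~\eqref{eq:q-rank}; your explicit restriction to the $\mathcal S_n(L)$-coordinates and your spanning-forest justification of the incidence rank are only slightly more careful than the paper's one-line equality.
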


\begin{proof}
Choose $0\ne\mu\in\SC(L)$.  Minimality gives $\supp(\mu)=L$, so every allowed round and semi-round pattern has positive $\mu$-weight.
Use the operator $\mathsf{Par}_n^\mu$ from Lemma~\ref{lem:weighted-incidence}.  That lemma shows that, after discarding loop columns, the matrix of $\mathsf{Par}_n^\mu D_n(L)^t$ on the allowed semi-round coordinates is an incidence matrix of $\mathcal G_n(L)$ with each column multiplied by the positive scalar $\langle J,\mu\rangle_A$.  By Lemma~\ref{lem:transition-connected}, this graph is connected, so its incidence matrix has rank $R_L(n-1)-1$.  Hence
\[
 \operatorname{rank}D_n(L)
 =\operatorname{rank}D_n(L)^t
 \ge \operatorname{rank}\bigl(\mathsf{Par}_n^\mu D_n(L)^t\bigr)
 =R_L(n-1)-1,
\]
which proves~\eqref{eq:switch-rank-lower}.  Combining~\eqref{eq:switch-rank-lower} with~\eqref{eq:q-rank} gives~\eqref{eq:q-extension}.
\end{proof}

\begin{thm}[Round extension-excess bound]\label{thm:extension-excess}
Suppose that $L$ is minimal.  Then
\begin{equation}
 \boxed{
 d(L)\le 1+\liminf_{n\to\infty}E_L(n)
 =1+\liminf_{n\to\infty}\bigl(R_L(n)-R_L(n-1)\bigr).
 }
 \label{eq:extension-dim-bound}
\end{equation}
In particular,
\begin{equation}
 d(L)\le 1+\liminf_{n\to\infty}\frac{R_L(n)}n.
 \label{eq:linear-dim-bound}
\end{equation}
If the latter liminf is finite, the right-hand side may be replaced by
\begin{equation}
 1+\left\lfloor\liminf_{n\to\infty}\frac{R_L(n)}n\right\rfloor.
 \label{eq:linear-dim-floor}
\end{equation}
Consequently $R_L(n)=O(n)$ implies $d(L)<\infty$.
\end{thm}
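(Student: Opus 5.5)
Combine Theorem~\ref{thm:switch-nullity}(1) with Proposition~\ref{prop:switch-rank}. For every $n\ge2$, inequality~\eqref{eq:q-extension} gives $q_L(n)\le E_L(n)+1$. Taking $\liminf$ and using $d(L)\le\liminf_n q_L(n)$ gives
\[
 d(L)\le\liminf_{n\to\infty}q_L(n)\le 1+\liminf_{n\to\infty}E_L(n),
\]
which is~\eqref{eq:extension-dim-bound}. If the liminf of $E_L$ is infinite there is nothing to prove, so assume it is finite.

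For~\eqref{eq:linear-dim-bound}, I will use an elementary Cesàro-type comparison: for a sequence of nonnegative integers,
\[
 \liminf_n E_L(n)\le\liminf_n \frac{R_L(n)}{n}.
\]
Put $m=\liminf_n E_L(n)$. Since the $E_L(n)$ are nonnegative integers, $E_L(n)\ge m$ for all $n\ge n_0$. Telescoping gives
\[
 R_L(n)=R_L(n_0)+\sum_{k=n_0+1}^{n}E_L(k)\ge R_L(n_0)+m(n-n_0),
\]
so $\liminf_n R_L(n)/n\ge m$. If $m=\infty$, the same argument with any threshold $M$ in place of $m$ shows $R_L(n)/n\to\infty$. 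This yields~\eqref{eq:linear-dim-bound}.

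For the floor refinement~\eqref{eq:linear-dim-floor}, observe that $m$ is an integer with $m\le\liminf_n R_L(n)/n$, hence $m\le\lfloor\liminf_n R_L(n)/n\rfloor$, and so $d(L)\le1+m$ gives the claim. The final consequence is immediate: $R_L(n)=O(n)$ makes $\liminf R_L(n)/n$ finite, hence $d(L)<\infty$.

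There is no real obstacle here, since all the substantive work sits in Proposition~\ref{prop:switch-rank} and Theorem~\ref{thm:switch-nullity}. The only points needing care are the integrality of $E_L(n)$ and $q_L(n)$ (used when passing from the liminf to an eventual lower bound and to the floor) and the convention that the inequalities hold trivially when a liminf is $+\infty$.
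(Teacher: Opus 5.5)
Your proof is correct and follows the paper's argument: you combine Theorem~\ref{thm:switch-nullity}(1) with inequality~\eqref{eq:q-extension} from Proposition~\ref{prop:switch-rank}, then compare $\liminf E_L(n)$ with $\liminf R_L(n)/n$ by telescoping, and use integrality to get the floor refinement. The only cosmetic difference is that you make the Ces\`aro comparison explicit through the eventual bound $E_L(n)\ge m$, which relies on integrality, whereas the paper states the comparison for general nonnegative increments.
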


\begin{proof}
The first inequality follows from Theorem~\ref{thm:switch-nullity} and Proposition~\ref{prop:switch-rank}.  Since
\[
 R_L(n)=R_L(1)+\sum_{j=2}^n E_L(j)
\]
and $E_L(j)\ge0$, the liminf of the terms $E_L(j)$ is at most the liminf of their Ces\`aro averages.  Therefore
\[
 \liminf_n E_L(n)\le \liminf_n\frac{R_L(n)}n,
\]
which proves~\eqref{eq:linear-dim-bound}.  Since $d(L)$ and $E_L(n)$ are integer-valued,~\eqref{eq:linear-dim-floor} follows as well.
\end{proof}

\begin{cor}[Necessary complexity for many currents]\label{cor:necessary-complexity}
Under the hypotheses of Theorem~\ref{thm:extension-excess}, the following hold.
\begin{enumerate}[(1)]
\item If $d(L)=d<\infty$, then
\[
 E_L(n)\ge d-1
\]
for all sufficiently large $n$, and hence
\begin{equation}
 R_L(n)\ge (d-1)n-O(1).
 \label{eq:necessary-linear}
\end{equation}
\item If $d(L)=\infty$, then
\begin{equation}
 E_L(n)\longrightarrow\infty
 \qquad\text{and}\qquad
 \frac{R_L(n)}n\longrightarrow\infty.
 \label{eq:infinite-superlinear}
\end{equation}
\end{enumerate}
\end{cor}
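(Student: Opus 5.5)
The plan is to go back to the proof of Theorem~\ref{thm:switch-nullity} rather than use only its $\liminf$ conclusion. That proof gives a stronger statement. For every finite-dimensional subspace $E\subseteq V(L)$ there is $n_E$ such that $\omega_n|_E$ is injective for all $n\ge n_E$, and hence $\dim E\le q_L(n)$ for all $n\ge n_E$. The reason is that the chain $E\cap\ker\omega_n$ is descending with zero intersection, so it stabilizes at $\{0\}$. Since $L$ is minimal, Proposition~\ref{prop:switch-rank} gives $q_L(n)\le E_L(n)+1$ for every $n\ge2$. Chaining the two inequalities, I will use the eventual bound
\[
 \dim E\le E_L(n)+1\qquad(n\ge n_E),
\]
valid for each finite-dimensional $E\subseteq V(L)$.

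For part~(1), take $E=V(L)$, which has dimension $d<\infty$. The eventual bound gives $E_L(n)\ge d-1$ for all $n\ge n_0:=n_{V(L)}$. For the growth estimate I will use the telescoping identity from the proof of Theorem~\ref{thm:extension-excess}:
\[
 R_L(n)=R_L(n_0-1)+\sum_{j=n_0}^{n}E_L(j)\ge (d-1)(n-n_0+1).
\]
Since $n_0$ is fixed, this is $(d-1)n-O(1)$. If the phrase ``exactly $d$ ergodic rays'' from Theorem~\ref{mainthm:complexity} is wanted here as well, I would argue as follows. A cone with a compact convex base spanning a $d$-dimensional space, whose base is a Choquet simplex as in Subsection~\ref{subsec:choquet}, is a $(d-1)$-simplex. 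Its $d$ vertices are the extreme rays.

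For part~(2), suppose $d(L)=\infty$ and fix $m\in\N$. Choose an $m$-dimensional subspace $E\subseteq V(L)$, which exists because $V(L)$ is infinite-dimensional. The eventual bound gives $E_L(n)\ge m-1$ for all $n\ge n_E$. As $m$ is arbitrary, $E_L(n)\to\infty$. Then $R_L(n)/n\to\infty$ follows from the same telescoping identity. Averages of a sequence tending to infinity tend to infinity, because the $R_L(1)$ term and any finite initial segment are negligible after dividing by $n$.

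The only point needing care is extracting the ``for all sufficiently large $n$'' form from the proof of Theorem~\ref{thm:switch-nullity}, rather than just the $\liminf$ form of its statement. The descending-chain argument there gives exactly this, so I expect no genuine obstacle. Everything else is bookkeeping with the nonnegative increments $E_L(j)$.
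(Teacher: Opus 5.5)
Your proposal is correct and is essentially the paper's argument. The paper reads off $\liminf_n E_L(n)\ge d-1$ (resp.\ $=\infty$) from Theorem~\ref{thm:extension-excess} and uses integrality of $E_L(n)$ to upgrade this to an eventual bound, whereas you get the eventual bound directly from the descending-chain step in the proof of Theorem~\ref{thm:switch-nullity} combined with Proposition~\ref{prop:switch-rank}, which bypasses the integrality remark; the telescoping and Ces\`aro steps are the same, provided you take $n_0\ge2$ so that every $E_L(j)$ in your sum is defined.
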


\begin{proof}
If $d(L)=d<\infty$,~\eqref{eq:extension-dim-bound} gives $\liminf E_L(n)\ge d-1$.  Since $E_L(n)$ is integer-valued, $E_L(n)\ge d-1$ for all sufficiently large $n$, and summing the increments proves~\eqref{eq:necessary-linear}.  If $d(L)=\infty$, the same bound forces $\liminf E_L(n)=\infty$, which is exactly $E_L(n)\to\infty$.  The Ces\`aro averages of a sequence tending to infinity also tend to infinity, giving~\eqref{eq:infinite-superlinear}.
\end{proof}

\subsection{Restriction maps and stable switch sets}\label{subsec:stable-switch}

Finite-level switch solutions need not extend coherently to all higher grades, so finite-level nullity need not equal the true dimension.  We isolate the extendable part.

\begin{defn}[Finite switch cone and polytope]\label{def:finite-switch-polytope}
For $n\ge2$ define the \emph{finite switch cone}
\[
 Q_n(L):=W_n(L)\cap\R_{\ge0}^{\mathcal R_n(L)},
\]
and, for $x\in Q_n(L)$, put
\[
 \|x\|_1:=\sum_{[K]\in\mathcal R_n(L)}x_K.
\]
The corresponding \emph{normalized finite switch polytope} is
\[
 \Delta_n(L):=\{x\in Q_n(L):\|x\|_1=1\}.
\]
Because $L$ carries a nonzero current, $\Delta_n(L)$ is nonempty.
\end{defn}

For $s\ge n$, Corollary~3.5 of~\cite{Kap17} gives a linear restriction map
\[
 \operatorname{Res}_{s,n}:\R^{\mathcal R_s(L)}\to\R^{\mathcal R_n(L)}
\]
by summing over all grade-$s$ round extensions of a grade-$n$ round graph with the same center.

\begin{lem}\label{lem:restriction-switch}
For $s\ge n\ge2$,
\[
 \operatorname{Res}_{s,n}(Q_s(L))\subseteq Q_n(L),\qquad
 \operatorname{Res}_{s,n}(\Delta_s(L))\subseteq\Delta_n(L),
\]
and
\[
 \operatorname{Res}_{t,n}=\operatorname{Res}_{s,n}\operatorname{Res}_{t,s}\qquad(t\ge s\ge n).
\]
\end{lem}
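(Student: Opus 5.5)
The plan is to obtain everything from the splitting formulas of \cite[Corollary~3.5 and Proposition~3.8]{Kap17}, rewritten as identities among genuine measures, and to exploit the fact that restriction is a composition of one-step parent sums.

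\textbf{Step 1: the transitivity identity.} By definition, $\operatorname{Res}_{s,n}$ sends a vector $y$ to the vector whose $[K]$-coordinate is the sum of $y_{K'}$ over all allowed grade-$s$ round graphs $K'$ with the same center whose truncation to $\operatorname{Ball}_{X_A}(c,n)$ equals $K$. In other words, the coefficient is the number of such $[K']$ with $(\operatorname{par}_{n+1}\circ\cdots\circ\operatorname{par}_s)([K'])=[K]$. Since truncating first to radius $s$ and then to radius $n$ is the same as truncating directly to radius $n$, grouping the grade-$t$ extensions according to their grade-$s$ truncations gives $\operatorname{Res}_{t,n}=\operatorname{Res}_{s,n}\operatorname{Res}_{t,s}$. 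This truncation is again allowed because parent maps preserve allowedness.

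\textbf{Step 2: nonnegativity and mass.} The entries of $\operatorname{Res}_{s,n}$ are nonnegative integers, so nonnegative vectors go to nonnegative vectors. Each grade-$s$ class has exactly one grade-$n$ truncation class, so every column of $\operatorname{Res}_{s,n}$ has exactly one entry $1$. Hence $\|\operatorname{Res}_{s,n}y\|_1=\|y\|_1$ for $y\ge0$. Together with preservation of the switch kernel (Step 3), this gives $\operatorname{Res}_{s,n}(\Delta_s)\subseteq\Delta_n$.

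\textbf{Step 3: switch equations are preserved.} This is the main obstacle. By Step 1 it suffices to treat $s=n+1$ and iterate. Let $y\in W_{n+1}(L)$ and let $[J]$ be a grade-$n$ semi-round class with central edge $e$. The quantity to control is the difference between the sums of $(\operatorname{Res}y)_K$ over completions centered at $o(e)$ and over those centered at $t(e)$.

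The plan is to expand each such sum into a sum of $y_{K'}$ over grade-$(n+1)$ round graphs centered at $o(e)$ (respectively $t(e)$) whose relevant truncations contain $J$. These are then regrouped according to their grade-$(n+1)$ semi-round $e$-children $J'$, where $J'$ ranges over the allowed grade-$(n+1)$ semi-round extensions of $J$ (these are combinatorial identities from \cite[Section~3]{Kap17}). After regrouping, the difference becomes the sum over such $J'$ of the switch coordinates $(D_{n+1}y)_{[J']}$, and each of these vanishes.

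The point to check carefully is that this regrouping is a bijection. Concretely, a grade-$(n+1)$ round graph centered at $o(e)$ that extends a completion of $J$ determines its semi-round $e$-child of grade $n+1$, and that child restricts to $J$. This is the purely combinatorial analogue of the cylinder additivity underlying \cite[Corollary~3.5]{Kap17}.

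One must also check that the support restriction is harmless. Any forbidden pattern appearing in these sums has every extension forbidden as well, and forbidden coordinates are simply absent. So the identity holds verbatim among the allowed coordinates.
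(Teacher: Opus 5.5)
Your proposal is correct and follows essentially the paper's route. Both arguments rest on the same three ingredients: the identity $(D_n(L)\operatorname{Res}_{s,n}x)_J=\sum_{J'}(D_s(L)x)_{J'}$, obtained by regrouping the completions of $J$ according to their semi-round refinements $J'$; the fact that $\operatorname{Res}_{s,n}$ is a $0$--$1$ matrix with exactly one nonzero entry in each column; and the fact that a forbidden pattern has only forbidden extensions. The difference is organizational: you reduce to $s=n+1$ via the composition rule and iterate, whereas the paper does the regrouping for general $s\ge n$ in one step.
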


\begin{proof}
The composition rule and preservation of the $\ell^1$-normalization follow directly by iterating the disjoint cylinder splittings.  We verify the switch relations directly.

Fix a grade-$n$ semi-round graph $J$.  Each completion of $J$ to a grade-$n$ round graph admits a finite disjoint family of grade-$s$ round refinements.  Equivalently, if one first refines $J$ to all compatible grade-$s$ semi-round graphs and then completes on either side of the central edge, one obtains exactly the same grade-$s$ round patterns, with the same multiplicities, as by first completing at grade $n$ and then refining.  Therefore, for every $x\in\R^{\mathcal R_s(L)}$,
\[
 \bigl(D_n(L)\operatorname{Res}_{s,n}x\bigr)_J
 =\sum_{J'}\bigl(D_s(L)x\bigr)_{J'},
\]
where the finite sum ranges over the allowed grade-$s$ semi-round refinements $J'$ of $J$, with the multiplicities coming from the cylinder splittings of~\cite[Proposition~3.8 and Corollary~3.5]{Kap17}.  Hence $D_s(L)x=0$ implies $D_n(L)\operatorname{Res}_{s,n}x=0$.

If a grade-$n$ round pattern is forbidden by $L$, then every grade-$s$ extension of it is forbidden as well.  Thus $\operatorname{Res}_{s,n}$ also preserves the support restriction and nonnegativity.  Therefore
\[
 \operatorname{Res}_{s,n}(Q_s(L))\subseteq Q_n(L).
\]
Together with preservation of the $\ell^1$-normalization, it gives the corresponding assertion for $\Delta_s(L)$.
\end{proof}

\begin{defn}[Stable switch set and stable finite-level dimension]\label{def:stable-switch}
Let $n\ge2$.
\begin{enumerate}[(1)]
\item The \emph{stable switch set} at grade $n$ is
\[
 \Delta_n^\infty(L):=
 \bigcap_{s\ge n}\operatorname{Res}_{s,n}(\Delta_s(L)).
\]
The sets in the intersection are nested nonempty compact convex subsets of $\Delta_n(L)$; hence $\Delta_n^\infty(L)$ is nonempty.
\item The \emph{stable finite-level dimension} is
\[
 s_L(n):=1+\operatorname{affdim}\Delta_n^\infty(L).
\]
\end{enumerate}
\end{defn}

By Definition~\ref{def:normalized-current-base}, $\mathcal M_A(L)$ denotes the normalized carried-current base.  Equivalently, for $\mu\in\SC(L)$, one has $\mu\in\mathcal M_A(L)$ exactly when
\[
 \sum_{[K]\in\mathcal B_n}\langle K,\mu\rangle_A=1
\]
for one, and hence every, grade $n$.

\begin{thm}[Stable switch characterization]\label{thm:stable-switch}
The following hold.
\begin{enumerate}[(1)]
\item For every $n\ge2$, the image of $\mathcal M_A(L)$ under the grade-$n$ weight map is exactly
\begin{equation}
 \Delta_n^\infty(L).
 \label{eq:image-stable}
\end{equation}
\item One has
\begin{equation}
 \boxed{
 d(L)=\sup_{n\ge2}s_L(n)
 =1+\sup_{n\ge2}\operatorname{affdim}\Delta_n^\infty(L).
 }
 \label{eq:exact-dimension}
\end{equation}
Moreover, the sequence $s_L(n)$ is nondecreasing and
\begin{equation}
 s_L(n)\le q_L(n)
 \label{eq:s-less-q}
\end{equation}
for every $n$.
\item If $L$ is minimal, then $L$ carries a projectively unique nonzero subset current if and only if every $\Delta_n^\infty(L)$ is a single point.
\end{enumerate}
\end{thm}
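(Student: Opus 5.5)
The argument rests on part~(1); parts~(2) and~(3) then follow by linear-algebra bookkeeping.

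\textbf{Inclusion $\omega_n(\mathcal M_A(L))\subseteq\Delta_n^\infty(L)$.} Take $\mu\in\mathcal M_A(L)$. For every $s\ge n$, $\omega_s(\mu)\in W_s(L)$ and is nonnegative. Its $\ell^1$-norm is $1$ by the normalization characterization stated just before the theorem. The splitting formula of \cite[Corollary~3.5]{Kap17} gives $\operatorname{Res}_{s,n}\omega_s(\mu)=\omega_n(\mu)$. Hence $\omega_n(\mu)\in\operatorname{Res}_{s,n}(\Delta_s(L))$ for every $s\ge n$.

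\textbf{Reverse inclusion: building a coherent family.} Let $x_n\in\Delta_n^\infty(L)$. I would use a König/compactness argument to produce $x_s\in\Delta_s^\infty(L)$ with $\operatorname{Res}_{s,n}x_s=x_n$ for all $s\ge n$. The key claim is $\operatorname{Res}_{s,n}(\Delta_s^\infty(L))=\Delta_n^\infty(L)$. To prove it:
\begin{itemize}
\item Fix $s$. For each $t\ge s$, the set $\operatorname{Res}_{t,s}(\Delta_t(L))\cap\operatorname{Res}_{s,n}^{-1}(x_n)$ is nonempty, because $x_n=\operatorname{Res}_{t,n}(y_t)$ for some $y_t\in\Delta_t(L)$.
\item These sets are compact and nested in $t$, by the composition rule of Lemma~\ref{lem:restriction-switch}.
\item So their intersection is nonempty. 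Induction on $s$ then yields the coherent sequence $(x_s)_{s\ge n}$.
\end{itemize}
For $s<n$, set $x_s:=\operatorname{Res}_{n,s}x_n$.

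\textbf{Reverse inclusion: realizing the family as a current.} Define $\nu$ on rooted cylinders in $\calT_1(X)$ by declaring the mass of the cylinder of a grade-$s$ round graph centered at $1$ to be $x_s([K])$. Forbidden patterns get mass $0$.
\begin{itemize}
\item Coherence of the $x_s$ gives finite additivity on the clopen partitions by round patterns about $1$. These partitions generate the topology, so Carathéodory/Kolmogorov yields a Borel probability measure on $\calT_1(X)$.
\item The support of $\nu$ lies in the rooted transversal of $L$, since forbidden patterns have measure zero and $L$ is closed.
\item It remains to show that $\nu$ is root-change invariant. It suffices to treat single-letter root changes $r_a$, $a\in A^{\pm1}$. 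Here I would use the switch equations: a cylinder condition involving both $1$ and $a$ is, up to refinement, a union of grade-$s$ semi-round cylinders centered on the edge $[1,a]$. The equation $D_s(L)x_s=0$ says exactly that the $\nu$-mass computed from completions at $o(e)$ equals that computed at $t(e)$.
\end{itemize}
Theorem~\ref{thm:KN-rooted} then gives $\mu=t(\nu)\in\mathcal M_A(L)$ with $\omega_n(\mu)=x_n$. The step I expect to be the main obstacle is checking that the switch relations at every grade give $r_a$-invariance on a generating family of cylinders, and doing the bookkeeping of the two completion multisets and of multiplicities correctly.

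\textbf{Part~(2).} The map $\omega_n$ is affine on $\mathcal M_A(L)$, so by part~(1) $\operatorname{affdim}\Delta_n^\infty(L)=\dim\omega_n(V(L))-1$. The normalization is the linear functional $\|\cdot\|_1\circ\omega_n$, which is nonzero on $V(L)$, so the affine hull of $\mathcal M_A(L)$ has codimension one in $V(L)$.
\begin{itemize}
\item Since $\ker\omega_s\subseteq\ker\omega_n$ for $s\ge n$, the function $s_L(n)=\dim\omega_n(V(L))$ is nondecreasing.
\item The descending-chain argument from the proof of Theorem~\ref{thm:switch-nullity} shows that $\dim\omega_n(V(L))$ tends to $d(L)$, whether $d(L)$ is finite or infinite. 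Taking the supremum over $n$ gives~\eqref{eq:exact-dimension}.
\item The inequality $s_L(n)\le q_L(n)$ holds because $\omega_n(V(L))\subseteq W_n(L)$.
\end{itemize}

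\textbf{Part~(3).} By part~(2), $d(L)=1$ if and only if every $\Delta_n^\infty(L)$ is a single point. For minimal $L$ carrying a nonzero current, $d(L)=1$ is equivalent to projective uniqueness.
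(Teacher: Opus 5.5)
Your proposal is correct and follows the paper's proof essentially step by step. The one variation is in producing the coherent sequence: you proceed inductively through nested compacta and the identity $\operatorname{Res}_{s,n}(\Delta_s^\infty(L))=\Delta_n^\infty(L)$, whereas the paper applies the finite-intersection property in $\prod_{j\ge n}\Delta_j(L)$ and derives that identity only afterwards. The remaining steps agree with the paper: the clopen-premeasure construction, root-change invariance from the switch equations on semi-round cylinders along $[1,a]$, the support transfer via $\Sat(K_L)=L$, and the descending-kernel dimension count.
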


\begin{proof}
Every normalized current carried by $L$ supplies compatible vectors in $\Delta_s(L)$ at all grades.  Hence its grade-$n$ vector belongs to every $\operatorname{Res}_{s,n}(\Delta_s(L))$, proving one inclusion in~\eqref{eq:image-stable}.

Conversely, let $x_n\in\Delta_n^\infty(L)$.  For every $m\ge n$, choose $x_m^{(m)}\in\Delta_m(L)$ with
\[
 \operatorname{Res}_{m,n}(x_m^{(m)})=x_n.
\]
Restricting $x_m^{(m)}$ to the intermediate grades gives a compatible finite string of vectors from grade $n$ through grade $m$.  Consider the product of the compact polytopes $\prod_{j\ge n}\Delta_j(L)$.  For $j\ge n$ let
\[
 C_j:=\{(x_i)_{i\ge n}:\operatorname{Res}_{j+1,j}(x_{j+1})=x_j\},
\]
and also impose the closed condition that the $n$-th coordinate equals the prescribed vector $x_n$.  Every finite collection of these conditions is satisfiable by taking one of the strings coming from $x_m^{(m)}$ with $m$ sufficiently large and choosing the remaining coordinates arbitrarily.  The finite-intersection property and compactness therefore give a fully compatible sequence
\[
 (x_j)_{j\ge n},\qquad
 \operatorname{Res}_{j+1,j}(x_{j+1})=x_j.
\]
Extend it to the finitely many smaller grades by restriction.

Extend each $x_j$ by zero on forbidden round classes.  The grade-$j$ round cylinders centered at the identity form a finite clopen partition of $\calT_1(X_A)$, and these partitions refine one another.  Compatibility under the maps $\operatorname{Res}_{j+1,j}$ says exactly that the values prescribed by $x_j$ are finitely additive and consistent under refinement.  Thus, for any clopen set which is a union of sufficiently fine round cylinders, its mass is defined by summing the corresponding coordinates of $x_j$, independently of the sufficiently large choice of $j$.  These clopen sets form an algebra generating the topology.  Moreover, a decreasing sequence of clopen sets with empty intersection is eventually empty by compactness, so this finitely additive probability is a premeasure on the clopen algebra.  It therefore extends uniquely to a Borel probability measure $\nu$ on $\calT_1(X_A)$.

We next verify root-change invariance.  Fix $a\in A$.  For each $j\ge2$, the rooted cylinders $C_J$ of grade-$j$ semi-round graphs centered at the midpoint of $[1,a]$ form a finite clopen partition of $\calT_{1,a}(X_A)$.  These partitions refine and generate its topology, since each semi-round pattern determines the radius-$(j-1)$ ball about $1$.  The cylinders of round completions centered at $1$ partition $C_J$; those centered at $a$, translated by $a^{-1}$, partition $r_aC_J$.  Their masses are the two full sums in the switch equation, hence are equal.  Every clopen subset of the domain is a union of sufficiently fine such cylinders, so
\[
 \nu(r_aC)=\nu(C)
\]
for every clopen $C\subseteq\calT_{1,a}(X_A)$.  Uniqueness of finite Borel measures extends the equality to all Borel subsets of the domain.  The inverse maps give invariance for the letters in $A^{-1}$ as well.

Now let $g=s_1\cdots s_\ell$ be a reduced word in the free basis alphabet and suppose $g\in T$.  Since $T$ is a subtree containing $1$, every prefix $s_1\cdots s_i$ lies in $T$.  Thus the partial root change $r_g(T)=g^{-1}T$ is the composition of the successive one-edge root changes along this path.  Hence $\nu$ is invariant under every partial root-change map.  Theorem~\ref{thm:KN-rooted} therefore gives a normalized subset current $\mu$.

Only patterns whose cylinders meet $L$ have nonzero coordinates.  Let
\[
 K_L:=\{\Hull_X(S):S\in L,\ 1\in\Hull_X(S)\}\subseteq\calT_1(X_A).
\]
By the rooted boundary--hull homeomorphism in Subsection~\ref{subsec:free-rooted}, the set $K_L$ is closed in $\calT_1(X_A)$.  If a rooted tree does not belong to $K_L$, then, because rooted cylinders form a clopen basis, some finite rooted ball occurring in it is forbidden by $L$; the corresponding round cylinder has $\nu$-measure zero.  Hence $\supp(\nu)\subseteq K_L$.  Since $L$ is $F_N$-invariant, its rooted transversal satisfies
\[
 \Sat(K_L)=L.
\]
Lemma~\ref{lem:support-transfer} now gives $\supp(\mu)\subseteq L$, establishing the reverse inclusion in~\eqref{eq:image-stable}.

The grade maps separate currents.  Therefore the affine dimension of $\mathcal M_A(L)$ is the supremum of the affine dimensions of its finite-grade images.  For a direct verification, any finite affinely independent collection of normalized currents remains affinely independent after projection to all sufficiently large grades, by the same descending-kernel argument as in the proof of Theorem~\ref{thm:switch-nullity}.  Hence
\[
 \operatorname{affdim}\mathcal M_A(L)
 =\sup_n\operatorname{affdim}\Delta_n^\infty(L).
\]
Since $d(L)=1+\operatorname{affdim}\mathcal M_A(L)$, the preceding equality proves~\eqref{eq:exact-dimension}.

For $s\ge n$, part~(1) and $\omega_n=\operatorname{Res}_{s,n}\circ\omega_s$ give
\[
 \operatorname{Res}_{s,n}(\Delta_s^\infty(L))=\Delta_n^\infty(L).
\]
Thus the affine dimensions, and hence $s_L(n)$, are nondecreasing.  Finally,~\eqref{eq:s-less-q} follows because $\Delta_n^\infty(L)$ lies in the affine hyperplane $\sum_K x_K=1$ inside $W_n(L)$.  The last assertion follows from part~(1), since the grade maps separate currents.
\end{proof}

Since $s_L(n)$ is a nondecreasing integer sequence and $d(L)=\sup_n s_L(n)$, finite dimensionality is equivalent to eventual stabilization of the stable finite-level dimensions $s_L(n)$.

Thus
\[
 \boxed{
 d(L)=\lim_{n\to\infty}s_L(n)
 \le\liminf_{n\to\infty}q_L(n)
 \le 1+\liminf_{n\to\infty}E_L(n)
 \le 1+\liminf_{n\to\infty}\frac{R_L(n)}n
 }
\]
for every minimal $L$ carrying a nonzero subset current.  The first term is exact, the second retains all finite-level switch directions, the third records only one-step round extension excess, and the last is the coarsest growth-rate bound.

\begin{cor}[Finite-dimensional cones have finitely many ergodic rays]\label{cor:finite-dim-ergodic}
Suppose that $L$ is minimal and that
\[
 d(L)=d<\infty.
\]
Then $L$ carries exactly $d$ ergodic rays.  More precisely, the normalized current space $\mathcal M_A(L)$ is an ordinary $(d-1)$-simplex, and after choosing a nonzero representative on each of its $d$ extreme rays, every carried current has a unique expression as a nonnegative linear combination of these representatives.  Consequently, if for some integer $M\ge1$ one has
\[
 q_L(n)\le M
\]
for arbitrarily large $n$, then $L$ carries at most $M$ ergodic rays.
\end{cor}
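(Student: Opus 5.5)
The plan is to identify $\mathcal M_A(L)$ with the simplex of root-change-invariant probability measures on the closed transversal $K_L\subseteq\calT_1(X_A)$. Then we use the fact that a finite-dimensional Choquet simplex is an ordinary simplex.

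\emph{Step 1.} By Theorem~\ref{thm:KN-rooted} and Lemma~\ref{lem:support-transfer}, $t^{-1}$ carries $\mathcal M_A(L)$ affinely and homeomorphically onto the set of root-change-invariant probability measures supported on $K_L$. These are exactly the invariant probability measures of the countable partial-action groupoid generated by the one-edge root changes $r_a$, $a\in A^{\pm1}$, on the compact metrizable space $K_L$.

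\emph{Step 2.} We show this set is a Choquet simplex. I would realize the groupoid measures as invariant measures for a genuine countable group action, as in \S\ref{subsec:choquet}; alternatively, I would prove directly that the ergodic decomposition exists and is unique. The most concrete route is the standard one for countable Borel equivalence relations. The root-change relation on $K_L$ is a countable Borel equivalence relation, generated by the partial homeomorphisms $r_g$. By Feldman--Moore it is the orbit relation of a countable group of Borel automorphisms. Finite invariant measures for the relation are then exactly the invariant measures for that group, which can be checked on the graphs of the $r_g$. The ergodic decomposition theorem for countable Borel equivalence relations (Varadarajan, Farrell) gives uniqueness of the decomposition over ergodic measures, hence the simplex property. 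Extreme points correspond to ergodic measures, and hence, via $t$, to extreme rays of $\SC(L)$.

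\emph{Step 3.} Since $d(L)=d<\infty$, the affine hull of $\mathcal M_A(L)$ has dimension $d-1$. A compact convex set of finite dimension which is a Choquet simplex is an ordinary $(d-1)$-simplex. Indeed, uniqueness of representing measures forces the extreme points to be affinely independent. Moreover, by Carathéodory or Minkowski, finitely many extreme points suffice; more precisely, the extreme boundary must be finite. This is because any $d+1$ extreme points are affinely dependent, giving two distinct representing measures. The simplex has $d$ vertices, so $L$ carries exactly $d$ ergodic rays. Every carried current is a positive multiple of a point in the simplex, which gives the unique nonnegative combination of the chosen representatives.

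\emph{Step 4.} The final assertion follows by combining this with Theorem~\ref{thm:switch-nullity}(2), which gives $d(L)\le M$.

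I expect the main obstacle to be Step 2: justifying the simplex property for the partial root-change system. I would try Feldman--Moore first. If that becomes awkward, a fallback avoids Choquet theory entirely. Ergodic measures are mutually singular, so any finite family of distinct extreme points of $\mathcal M_A(L)$ is linearly independent, and hence there are at most $d$ of them. Krein--Milman in finite dimension then says $\mathcal M_A(L)$ is the convex hull of at most $d$ points, while its dimension is $d-1$. So there are exactly $d$ extreme points, they are affinely independent, and $\mathcal M_A(L)$ is a simplex. The needed fact is that distinct ergodic invariant measures are mutually singular. This holds because the set where Birkhoff-type averages along the relation converge to each measure is invariant, or more simply because if $\nu_1\ne\nu_2$ are ergodic, the Lebesgue decomposition of $\nu_1$ with respect to $\nu_2$ is invariant.
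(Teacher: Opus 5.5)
Your proposal is correct and follows essentially the same route as the paper. You identify $\mathcal M_A(L)$ with the root-change-invariant probabilities on the rooted transversal of $L$, and invoke the ergodic decomposition for countable Borel equivalence relations (the paper cites \cite[Theorem~3.3]{KM04}) to get a Choquet simplex whose extreme points are the ergodic measures. You then conclude that a $(d-1)$-dimensional Choquet simplex is an ordinary simplex with $d$ vertices, and obtain the last assertion from Theorem~\ref{thm:switch-nullity}(2).

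Your fallback argument is also valid: extreme points are ergodic, distinct ergodic invariant measures are mutually singular and hence linearly independent, and Krein--Milman finishes in dimension $d-1$. It has the small advantage of bypassing Choquet theory altogether.
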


\begin{proof}
Via Theorem~\ref{thm:KN-rooted}, $\mathcal M_A(L)$ is the simplex of invariant probability measures for the countable root-change equivalence relation on the rooted-tree transversal associated to $L$.  The standard ergodic decomposition theorem for countable Borel equivalence relations identifies its extreme points with the ergodic invariant probabilities and makes $\mathcal M_A(L)$ a Choquet simplex; compare~\cite[Theorem~3.3]{KM04}.  A finite-dimensional Choquet simplex is an ordinary finite simplex.  Since
\[
 \operatorname{affdim}\mathcal M_A(L)=d(L)-1=d-1,
\]
it has exactly $d$ extreme points.  The last assertion follows from Theorem~\ref{thm:switch-nullity}.
\end{proof}

\begin{proof}[Proof of Theorem~\ref{mainthm:complexity}]
The upper bounds and the implication $R_L(n)=O(n)\Rightarrow d(L)<\infty$ follow from Theorem~\ref{thm:extension-excess}.  Corollary~\ref{cor:necessary-complexity} gives the asserted lower bounds when $d(L)<\infty$ and the divergence conclusions when $d(L)=\infty$.  Finally, Corollary~\ref{cor:finite-dim-ergodic} shows that, when $d(L)=d<\infty$, exactly $d$ ergodic rays are carried by $L$.
\end{proof}

\subsection{Reduction to ordinary Rauzy graphs in the pullback examples}

For the supports $L_Y$ coming from~\eqref{eq:p-to-Z}, the support-restricted switch system can be identified explicitly with the ordinary Rauzy system of $Y$.

\begin{prop}[Rauzy reduction]\label{prop:rauzy-reduction}
Let $Y\subseteq\{0,1\}^{\Z}$ be an infinite minimal subshift and let $L_Y$ be the associated branching subset lamination of Convention~\ref{conv:pullback-complexity}.  Then:
\begin{enumerate}[(1)]
\item For every $n\ge2$,
\[
 |\mathcal R_n(L_Y)|=P_Y(2n).
\]
\item For every $n\ge2$,
\[
 q_{L_Y}(n)=P_Y(2n)-P_Y(2n-1)+1.
\]
\item The normalized carried-current base $\mathcal M_A(L_Y)$ is affinely homeomorphic to the simplex of shift-invariant probability measures on $Y$.
\end{enumerate}
\end{prop}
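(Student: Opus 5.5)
The proof rests on one explicit description: what a round or semi-round pattern of $L_Y=\Sat(K_{\Omega_Y})$ looks like in terms of the coordinates of $y$. Part~(3) needs no new work. It is the composite of Proposition~\ref{prop:Fibonacci}, which identifies the invariant-measure simplices of $Y$ and $\Omega_Y$, with the affine homeomorphism~\eqref{eq:normalized-cone-correspondence} coming from Proposition~\ref{prop:general-measure-cone}.

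For part~(1), every leaf of $L_Y$ has hull of the form $gT_x$ with $x=x_y$. By Lemma~\ref{lem:root-shift}, every grade-$n$ round class occurs as the radius-$n$ ball of some $T_{x_{y'}}$ about $1$. Since $T_x$ is exactly $4$-regular, this ball is the whole round graph. I will read off which coordinates determine its labels, using~\eqref{eq:four-labels}. At a vertex $\kappa_x(z)$ the positive edges record $x(z)$, and the negative edges record $x(zu^{-1})$ and $x(zv^{-1})$. Every vertex $z$ at distance at most $n-1$ from $1$ therefore contributes $x(z)$. The only further contributions come from incoming negative $a$-edges, which reach one step further, to $zu^{-1}$ at distance $n$. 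Because $x(z)=y(\pi(z))$, all of these values are $y(k)$ for $k$ in the interval $[-n,n-1]$, of length $2n$.

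Conversely, this block can be recovered from the pattern. Follow the $a$-axis $\kappa_x(u^k)$ for $|k|\le n-1$: the positive $a$-labels give $y(-n+1),\dots,y(n-1)$, and the negative $a$-label at $\kappa_x(u^{-(n-1)})$ gives $y(-n)$. The pattern is also determined by the block, because $\kappa_x$ restricted to the ball is built from those labels. Every length-$2n$ word of $Y$ arises by shifting $y$. Hence $|\mathcal R_n(L_Y)|=P_Y(2n)$.

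For part~(2), I will describe the allowed semi-round classes, and the two completions of each, in the same way. First take a semi-round graph centered on a $v$-edge $[w,wv]$. Since $\pi(wv)=\pi(w)$, both truncations and both completions are encoded by the same length-$2n$ block, and each side has a unique completion. The switch equation is then $x_K-x_K=0$, a zero row that may be discarded. Next take a $u$-edge $[w,wu]$. I expect the semi-round graph to be encoded by the block $W$ on $[\pi(w)-n+1,\pi(w)+n-1]$, of length $2n-1$. Its completion at $w$ adds exactly one letter on the left, and its completion at $wu$ adds exactly one letter on the right. The switch equation is therefore
\[
 \sum_{aW\in\mathcal L(Y)}x_{aW}=\sum_{Wb\in\mathcal L(Y)}x_{Wb}.
\]
These are exactly the Kirchhoff equations of the Rauzy graph of $Y$. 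Its vertices are the words of length $2n-1$, and its edges are the words of length $2n$.

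The kernel of this incidence system is the cycle space of the Rauzy graph. Minimality of $Y$ makes the graph connected, and each vertex carries exactly one equation, so the dimension of the cycle space is
\[
 P_Y(2n)-P_Y(2n-1)+1 .
\]
This is the asserted value of $q_{L_Y}(n)$. Connectivity can also be quoted from Lemma~\ref{lem:transition-connected}.

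The main obstacle is the bookkeeping in the $u$-edge case. I must check three things. The semi-round pattern must encode precisely the length-$(2n-1)$ block. Distinct blocks must give distinct translation classes. The round completions of~\cite[Proposition~3.8]{Kap17} must each occur with multiplicity one. I will verify all three by the same label-reading argument along the $a$-axis as in part~(1), applied to the union of the two radius-$(n-1)$ balls about $w$ and $wu$.
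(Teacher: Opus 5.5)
Your proposal is correct and follows essentially the same route as the paper. You read the length-$2n$ block $y|_{[-n,n-1]}$ off the rooted ball, identify the $u$-edge switch rows with the Kirchhoff equations of the Rauzy graph on $(2n-1)$-words, discard the tautological $v$-edge rows, conclude by connectivity and rank-nullity, and obtain (3) from the measure correspondence.

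Two small corrections. First, a $v$-centered semi-round graph encodes only a block of length $2n-2$ and may have several completions on each side, so "each side has a unique completion" is false in general. Its row still vanishes, because the two completion multisets coincide (since $x_y\cdot v=x_y$). Second, Lemma~\ref{lem:transition-connected} gives connectivity of the Rauzy graph on words of length $2n-2$, not $2n-1$. You should therefore take connectivity directly from minimality of $Y$, as your primary argument already does.
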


\begin{proof}
A radius-$n$ rooted ball in $T_x$ records precisely the coordinates
\[
 y_{-n},y_{-n+1},\ldots,y_{n-1}.
\]
Indeed every vertex of $X_B$ at distance at most $n-1$ from the root is saturated in the round ball, and the negative $u$-edge at the lowest $\pi$-height records the additional value $y_{-n}$; no value outside the displayed interval is visible.  The coding is recoverable from the labelled tree, so grade-$n$ round patterns are in bijection with the allowed words of length $2n$.  The bijection proves part~(1).

For the switch equations, root change across a $v$-edge does not change the symbolic point because $v\in\ker\pi$; the two completion multisets are identical, so the corresponding switch rows are tautological.  Across a $u$-edge, the two radius-$n$ rooted balls overlap in the data of an allowed word of length $2n-1$.  More explicitly, the allowed $u$-centered semi-round classes are in bijection with the allowed $(2n-1)$-words, while an allowed $2n$-word gives the corresponding round class and joins its length-$(2n-1)$ prefix to its length-$(2n-1)$ suffix.  The completion multiplicities in the switch equation are exactly the usual incidence multiplicities.  Thus, after deleting the tautological rows, $D_n(L_Y)$ is the incidence matrix of the ordinary Rauzy graph whose vertices are the allowed $(2n-1)$-words and whose directed edges are the allowed $2n$-words.  Minimality makes this Rauzy graph strongly connected.  Hence its incidence matrix has rank
\[
 P_Y(2n-1)-1,
\]
and rank-nullity gives part~(2).

Part~(3) follows from Proposition~\ref{prop:general-measure-cone}: the $B$-action on $\Omega_Y$ factors through the shift action of $\Z$ on $Y$, and the pullback identifies the two invariant-measure simplices.
\end{proof}

For Fibonacci, $P_{\rm Fib}(\ell)=\ell+1$, so Proposition~\ref{prop:rauzy-reduction} gives
\[
 q_{L_{\rm Fib}}(n)=2
 \qquad(n\ge2),
\]
whereas Theorem~\ref{thm:symbolic} gives $d(L_{\rm Fib})=1$.  Equivalently,
\[
 s_{L_{\rm Fib}}(n)=1
\]
for every $n$.  Thus even in the simplest uniquely ergodic example the finite-level switch cone has an extra direction that disappears in the inverse limit.  This discrepancy is the exact branching analogue of the familiar fact that finite Rauzy circulation spaces can have dimension greater than the invariant-measure simplex of the subshift.

\begin{rem}[Unique ergodicity with exponential complexity]\label{rem:UE-exponential}
Strictly ergodic finite-alphabet subshifts with positive topological entropy are classical~\cite{Gri73}.  The synchronizing binary recoding of Lemma~\ref{lem:binary-recoding} preserves unique ergodicity and positive entropy.  For the resulting binary subshift $Y$, Propositions~\ref{prop:general-measure-cone} and~\ref{prop:rauzy-reduction} give a minimal genuinely branching subset lamination with $d(L_Y)=1$ and
\[
 \lim_{n\to\infty}\frac1n\log R_{L_Y}(n)=2h_{\mathrm{top}}(Y)>0.
\]
Thus projective unique ergodicity does not force subexponential round complexity.
\end{rem}

\subsection{Finite-dimensionality from symbolic low complexity}

The measure-cone correspondence transfers classical complexity results to the branching setting.

\begin{cor}[Boshernitzan bound for the branching pullback family]\label{cor:branching-Boshernitzan}
Let $Y\subseteq\{0,1\}^{\Z}$ be an infinite minimal subshift, with $P_Y$ and $L_Y$ as in Convention~\ref{conv:pullback-complexity}.  Then:
\begin{enumerate}[(1)]
\item If, for an integer $k\ge2$,
\begin{equation}
 \liminf_{\ell\to\infty}\frac{P_Y(\ell)}\ell<k,
 \label{eq:Bos-condition}
\end{equation}
then the associated minimal genuinely branching subset lamination $L_Y\subseteq\calH(\partial F_4)$ satisfies
\begin{equation}
 d(L_Y)\le k-1.
 \label{eq:Bos-dimension}
\end{equation}
In particular, $L_Y$ carries at most $k-1$ ergodic rays.
\item If
\[
 \limsup_{\ell\to\infty}\frac{P_Y(\ell)}\ell<3,
\]
then $L_Y$ carries a projectively unique nonzero subset current.
\end{enumerate}
\end{cor}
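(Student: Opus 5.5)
The plan is to transfer Boshernitzan's classical bounds for minimal $\Z$-subshifts through the measure-cone correspondence, without any new branching combinatorics. By Proposition~\ref{prop:rauzy-reduction}(3), equivalently Proposition~\ref{prop:general-measure-cone} combined with Proposition~\ref{prop:Fibonacci}, the normalized base $\mathcal M_A(L_Y)$ is affinely homeomorphic to the simplex $\mathcal M_{\Z}(Y)$ of shift-invariant probability measures on $Y$. Hence $d(L_Y)=1+\operatorname{affdim}\mathcal M_{\Z}(Y)$, and ergodic rays carried by $L_Y$ correspond to ergodic measures of $Y$. Both parts therefore reduce to statements about the number of ergodic measures of $Y$.

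For part~(1), I would invoke Boshernitzan's theorem: if a minimal subshift satisfies $\liminf_\ell P_Y(\ell)/\ell<k$, then it has at most $k-1$ ergodic invariant probability measures. The invariant-measure simplex of $Y$ is a Choquet simplex, so having at most $k-1$ extreme points makes it a finite simplex of affine dimension at most $k-2$. This gives $d(L_Y)\le k-1$ and at most $k-1$ ergodic rays, via Corollary~\ref{cor:finite-dim-ergodic} or directly through the affine homeomorphism.

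As a self-contained alternative, one could also try the paper's own machinery. Proposition~\ref{prop:rauzy-reduction}(1) gives $R_{L_Y}(n)=P_Y(2n)$, and Theorem~\ref{thm:extension-excess} gives
\[
 d\le 1+\liminf_n\bigl(P_Y(2n)-P_Y(2n-2)\bigr).
\]
However, a liminf condition on $P_Y(\ell)/\ell$ over all $\ell$ only controls the first differences $P_Y(\ell+1)-P_Y(\ell)$ along a subsequence of $\ell$, not necessarily even ones. Such a subsequence bound yields only $d\le 1+2(k-1)$ or so. So I expect the direct route to lose a factor, and I would rely on Boshernitzan's theorem as an external input.

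For part~(2), I would use Boshernitzan's unique ergodicity criterion: $\limsup_\ell P_Y(\ell)/\ell<3$ implies that a minimal subshift is uniquely ergodic. Then $\mathcal M_{\Z}(Y)$ is a point, so $\mathcal M_A(L_Y)$ is a point. Since $L_Y$ is minimal and carries a nonzero current, $\SC_{F_4}(L_Y)$ is a single ray.

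The main obstacle is precisely matching the hypotheses of the external results: minimality, a binary and hence finite alphabet, and the exact form of the liminf and limsup thresholds in Boshernitzan's theorems. Once these are checked, the rest is the bookkeeping of the affine identification.
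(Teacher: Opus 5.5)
Your proposal is correct and takes the paper's route: bound the number of ergodic measures of $Y$ by Boshernitzan's theorems and transfer this through the affine identification of Proposition~\ref{prop:general-measure-cone}. The only difference is in the citation: the paper quotes the Cyr--Kra generic-measure versions (\cite[Theorems~1.1 and~1.2]{CK19}, using that ergodic measures of an infinite minimal $Y$ are nonatomic and have generic points), where you quote Boshernitzan's original minimal-subshift statements; your aside about the internal bound is not needed, and in any case Theorem~\ref{thm:extension-excess} with $R_{L_Y}(n)=P_Y(2n)$ gives $d(L_Y)\le 2k$ there, not $1+2(k-1)$.
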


\begin{proof}
For part~(1), use Boshernitzan's theorem in the form of~\cite[Theorem~1.1]{CK19} (see~\cite{Bos84} for the original minimal-subshift result).  Since $Y$ is infinite and minimal, every ergodic invariant probability is nonatomic and has generic points; thus~\eqref{eq:Bos-condition} implies that $Y$ has at most $k-1$ ergodic invariant probability measures.  The invariant-measure set is a Choquet simplex, so if it has $e<\infty$ extreme points, the cone of finite invariant measures has linear dimension $e$.  Proposition~\ref{prop:general-measure-cone} therefore gives~\eqref{eq:Bos-dimension}.

For part~(2), suppose $\limsup P_Y(\ell)/\ell<3$ and $Y$ is not uniquely ergodic.  Choose an ergodic measure and a generic point for it.  Minimality makes the orbit closure of that point equal to $Y$, so~\cite[Theorem~1.2]{CK19} with $k=3$ allows at most one nonatomic generic measure.  The conclusion contradicts the existence of at least two ergodic invariant probabilities, each nonatomic and generic in the preceding sense.
\end{proof}

We now show that the linear threshold is sharp: below any prescribed superlinear scale one can realize arbitrary metrizable Choquet simplices.

We first record a standard recoding device, included because the tree coding of Section~3 is binary whereas the low-complexity realization theorem of Cecchi Bernales--Donoso is stated for finite-alphabet subshifts.

\begin{lem}[Synchronizing binary recoding]\label{lem:binary-recoding}
Let $\mathcal A$ be a finite alphabet and let $Z\subseteq\mathcal A^{\Z}$ be a nonempty minimal subshift.  Let $P_Z(\ell)$ denote its word-complexity function.  Then there exist an integer $m\ge4$ and a minimal binary subshift $Y\subseteq\{0,1\}^{\Z}$, with word-complexity function $P_Y(\ell)$, such that:
\begin{enumerate}[(1)]
\item the invariant probability simplices of $Z$ and $Y$ are affinely homeomorphic;
\item if $Z$ is Toeplitz, then $Y$ is Toeplitz;
\item for every $\ell\ge1$,
\begin{equation}
 P_Y(\ell)\le m\,P_Z\!\left(\left\lceil\frac \ell m\right\rceil+2\right).
 \label{eq:binary-complexity-bound}
\end{equation}
\end{enumerate}
\end{lem}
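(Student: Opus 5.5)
We would use a standard substitution: a synchronizing block code that sends each letter to a binary word of a common length $m$, arranged so that the block boundaries can be read off locally. Enumerate $\mathcal A=\{\alpha_1,\dots,\alpha_k\}$ and pick $m\ge \max(4,k+3)$. Send $\alpha_i$ to
\[
 \beta_i:=1\,1\,0\,c_i\in\{0,1\}^m,
\]
where $c_i$ is a word of length $m-3$ containing no factor $11$, the $c_i$ are pairwise distinct, and each $c_i$ ends in $0$. For example, take $c_i=0^{i}\,1\,0^{m-4-i}$ once $m$ is large enough; if $i=0$ is not available, use $c_i=0^{m-3}$ for one letter. With this choice, the factor $11$ occurs in a concatenation $\beta_{i_1}\beta_{i_2}\cdots$ exactly at the block starts. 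Let $\tau:\mathcal A^{\Z}\to\{0,1\}^{\Z}$ be the induced map with blocks starting at multiples of $m$. Define $Y$ as the orbit closure of $\tau(Z)$, which equals $\bigcup_{j<m}\sigma^j\tau(Z)$.

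\textbf{Minimality and the tower structure.} By recognizability, $Y$ is a disjoint union of the clopen sets $\sigma^j\tau(Z)$, $0\le j<m$. The phase $j$ is read from the position of the nearest $11$. On $\tau(Z)$ the map $\sigma^m$ is conjugate to the shift on $Z$ via $\tau$, whose inverse is continuous because the blocks $\beta_i$ are distinct and aligned. Thus $(Y,\sigma)$ is the height-$m$ suspension (tower) over $(Z,\sigma)$. Minimality of $Y$ follows, since every orbit passes through $\tau(Z)$ and $\tau$ is a conjugacy of the base dynamics.

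\textbf{Part (1): the invariant simplices.} Any $\sigma$-invariant probability $\nu$ on $Y$ satisfies $\nu(\tau(Z))=1/m$, since the $m$ levels are permuted by $\sigma$. The map $\nu\mapsto m\,\tau^{-1}_*(\nu|_{\tau(Z)})$ is an affine homeomorphism onto the invariant probabilities of $Z$, with inverse
\[
 \lambda\mapsto \frac1m\sum_{j<m}\sigma^j_*\tau_*\lambda .
\]
Continuity in the weak-* topology holds because the levels are clopen.

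\textbf{Part (2): Toeplitz is preserved.} Suppose $z\in Z$ is Toeplitz, with each coordinate periodic. Consider $y=\tau(z)$. Coordinate $mq+r$ of $y$ is the $r$-th letter of $\beta_{z(q)}$. If $z(q)$ has period $p$, then $y(mq+r)$ has period $mp$. Hence $y$ is Toeplitz, and its orbit closure is $Y$ by minimality.

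\textbf{Part (3): the complexity bound.} A word $w$ of length $\ell$ in $Y$ is determined by two pieces of data. The first is its phase $j\in\{0,\dots,m-1\}$. The second is the word of $Z$ covering it, of length at most $\lceil \ell/m\rceil+1\le\lceil \ell/m\rceil+2$. Since $P_Z$ is nondecreasing, this gives $P_Y(\ell)\le m\,P_Z(\lceil\ell/m\rceil+2)$.

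\textbf{Main obstacle.} The delicate step is recognizability: showing that $11$ appears only at block starts, including across the junctions between blocks. Each $c_i$ avoids $11$, ends in $0$, and the next block begins with $110$. So at a junction the only possible pattern is $0\,11$, which is a genuine block start. Every other occurrence of $1$ in the interior of a block is isolated, so no spurious $11$ arises. Once this is checked, the remaining parts are formal.
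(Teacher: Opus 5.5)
Your proposal is correct and follows the paper's proof essentially step by step. Both arguments use a constant-length code whose codewords begin with $11$, end in $0$, and contain no other $11$. Both then use the resulting height-$m$ tower over $Z$ to get minimality and the affine identification of the invariant simplices by averaging over phases and normalized restriction. The Toeplitz property comes from periodicity of coordinates, and the complexity bound from a phase plus a covering $Z$-word. The only difference is that you write down an explicit code and check the Toeplitz periods and the block count in detail, where the paper simply asserts that such a code exists once $m$ is large.
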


\begin{proof}
Choose $m$ sufficiently large and an injective constant-length code $c:\mathcal A\to\{0,1\}^m$ such that every codeword begins with $11$, ends with $0$, and has no other occurrence of $11$.  For $x\in Z$, concatenate the words $c(x_k)$ and put
\[
 Y=\bigcup_{j=0}^{m-1}\sigma^j c(Z).
\]
The marker $11$ identifies block beginnings, so the $m$ phases are pairwise disjoint and clopen and $c:Z\to c(Z)$ conjugates $\sigma$ with $\sigma^m$ on the aligned phase.  Minimality follows.  Averaging a $Z$-invariant probability measure over the $m$ phases gives a $Y$-invariant probability measure; conversely, every $Y$-invariant probability assigns mass $1/m$ to each phase, so the inverse is the normalized restriction
\[
 \nu\longmapsto (c^{-1})_*\bigl(m\,\nu|_{c(Z)}\bigr).
\]
Thus the invariant-measure simplices are affinely homeomorphic.  Constant-length coding also preserves the Toeplitz property.  Finally, a length-$\ell$ word of $Y$ is determined by its phase and by at most $\lceil \ell/m\rceil+2$ consecutive symbols of $Z$, which gives~\eqref{eq:binary-complexity-bound}.
\end{proof}

\begin{thm}[Choquet universality below every superlinear scale]\label{thm:slow-universality}
Let $\mathcal K$ be a nonempty metrizable Choquet simplex and let $(a_n)$ be any sequence of positive real numbers such that
\begin{equation}
 \frac{n}{a_n}\longrightarrow0.
 \label{eq:superlinear-scale}
\end{equation}
Then there exists an uncountable minimal genuinely branching subset lamination
\[
 L\subseteq\calH(\partial F_4)
\]
constructed with the standard basis $A=\{a_0,a_1,b_0,b_1\}$ of $F_4$, such that
\begin{equation}
 \mathcal M_A(L)\cong_{\rm aff}\mathcal K
 \label{eq:slow-univ-simplex}
\end{equation}
and
\begin{equation}
 R_L(n)=o(a_n).
 \label{eq:slow-round}
\end{equation}
In particular, if $\mathcal K$ has infinite-dimensional affine hull, then $d(L)=\infty$ while the round complexity can be made smaller than any prescribed superlinear scale.
\end{thm}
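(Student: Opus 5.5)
The plan is to realize $\mathcal K$ by a low-complexity minimal Toeplitz $\Z$-subshift, recode it to a binary subshift, pull it back along $\pi$ to $B=F(u,v)$, and apply the tree coding. The round complexity of the resulting lamination is then read off from Proposition~\ref{prop:rauzy-reduction}.

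\emph{Step 1: realize $\mathcal K$ at slow complexity.} Given $(a_n)$ with $n/a_n\to0$, set $b_\ell:=a_{\lfloor \ell/(4m)\rfloor}$-type rescaled scale (to be fixed after $m$ is known; since $m$ depends only on the alphabet, we may first fix the alphabet size from the realization theorem). We invoke the Cecchi Bernales--Donoso low-complexity realization theorem referred to before Lemma~\ref{lem:binary-recoding}: for every nonempty metrizable Choquet simplex $\mathcal K$ and every superlinear scale $(p_\ell)$ with $\ell/p_\ell\to0$, there is a minimal Toeplitz subshift $Z$ over a finite alphabet whose invariant-measure simplex is affinely homeomorphic to $\mathcal K$ and with $P_Z(\ell)=o(p_\ell)$. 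We apply it with a scale $p_\ell$ chosen so that $m\,p_{\lceil 2n/m\rceil+2}=o(a_n)$. For example, take $p_\ell:=\min_{j\ge \ell}\tilde a_j$ built from $\tilde a_j:=a_{\lfloor (j-3)m/2\rfloor}/(m\,\epsilon_j)$, with a slowly growing correction $\epsilon_j\to\infty$ that keeps $\ell/p_\ell\to0$. One must also ensure that the alphabet, and hence $m$, is fixed independently of the chosen scale; if the theorem does not give this, first recode $Z$ to the binary alphabet and only then choose the scale, since Lemma~\ref{lem:binary-recoding} gives a uniform constant $m$ for a given alphabet. If $\mathcal K$ is a singleton and $Z$ happens to be finite, replace it by Fibonacci, as in Theorem~\ref{thm:F4-universality}.

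\emph{Step 2: recode and code.} Lemma~\ref{lem:binary-recoding} gives a minimal binary $Y$ with invariant simplex $\cong_{\rm aff}\mathcal K$ and
\[
P_Y(\ell)\le m\,P_Z(\lceil \ell/m\rceil+2).
\]
Put $L:=L_Y=\Sat(K_{\Omega_Y})$. By Theorem~\ref{thm:F4-universality} and its proof (Propositions~\ref{prop:Fibonacci} and~\ref{prop:general-measure-cone} together with~\eqref{eq:normalized-cone-correspondence}), $L$ is uncountable, minimal, and genuinely branching, and $\mathcal M_A(L)\cong_{\rm aff}\mathcal K$. Infiniteness of $Y$ follows from that of $Z$, since the coding is injective.

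\emph{Step 3: complexity.} By Proposition~\ref{prop:rauzy-reduction}(1), $R_L(n)=P_Y(2n)\le m\,P_Z(\lceil 2n/m\rceil+2)=o(a_n)$ by the choice of scale in Step~1. The final sentence follows from Theorem~\ref{thm:stable-switch}: $d(L)=1+\operatorname{affdim}\mathcal M_A(L)=\infty$ when the affine hull of $\mathcal K$ is infinite-dimensional.

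The main obstacle is the bookkeeping in Step~1. The scale transformation $n\mapsto\lceil 2n/m\rceil+2$ must preserve superlinearity, and $a_n$ is not assumed monotone. Passing to the monotone minorant $\inf_{j\ge n}a_j$ fixes this, since $n/\inf_{j\ge n}a_j\le\sup_{j\ge n}j/a_j\to0$. The remaining dependence is on the exact form of the external realization theorem.
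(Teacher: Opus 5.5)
Your overall route matches the paper's: a low-complexity Toeplitz realization of $\mathcal K$ via Cecchi Bernales--Donoso, synchronizing binary recoding, pullback along $\pi$, tree coding, and $R_L(n)=P_Y(2n)$ from Proposition~\ref{prop:rauzy-reduction}. However, Step~1 as written does not close. You make the input scale $p_\ell$ depend on the code length $m$. But $m$ is produced by Lemma~\ref{lem:binary-recoding} from the alphabet of $Z$, and $Z$ (with its alphabet) exists only after the realization theorem has been applied to some scale. Your proposed repair, ``first recode $Z$ to the binary alphabet and only then choose the scale,'' is circular for the same reason: there is no $Z$ to recode before a scale is chosen. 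The explicit formula has a further defect. $\tilde a_j$ evaluates $a$ only at indices $\lfloor (j-3)m/2\rfloor$, and $(k_n-3)m/2<n$; for $m=4$ only even indices occur at all. So unless you first replace $a$ by a monotone minorant, it gives no control on $a_n$ itself. The correction $\epsilon_j$ is also unnecessary, since the realization theorem already gives $o(\cdot)$.

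The missing observation is that no $m$-dependent scale is needed. The recoding \emph{compresses} indices, and the fixed factor $m$ is absorbed by $o(\cdot)$. Put $b_\ell:=\inf_{j\ge\ell}a_j$, so $b_\ell/\ell\to\infty$ as you note. Apply the realization theorem with $b$ itself to get $P_Z(\ell)=o(b_\ell)$. Whatever $m\ge4$ the recoding then produces, $k_n:=\lceil 2n/m\rceil+2\le n$ for large $n$. Hence $b_{k_n}\le a_n$, $k_n\to\infty$, and
\[
 \frac{P_Y(2n)}{a_n}\le m\,\frac{P_Z(k_n)}{b_{k_n}}\longrightarrow 0 .
\]
This is exactly the paper's argument. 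Two small points remain. In the non-singleton case, $Z$ is automatically infinite, since a finite minimal subshift is a single periodic orbit and hence uniquely ergodic. In the singleton case, you should record that the Fibonacci replacement satisfies $R_L(n)=2n+1=o(a_n)$.
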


\begin{proof}
If $\mathcal K$ is a singleton, take the Fibonacci subshift.  Then $R_L(n)=2n+1$ by Proposition~\ref{prop:rauzy-reduction}, so~\eqref{eq:superlinear-scale} gives~\eqref{eq:slow-round}.

Assume now that $\mathcal K$ is not a singleton.  Define
\[
 b_\ell:=\inf_{j\ge \ell}a_j.
\]
Since $a_j/j\to\infty$, we have $b_\ell/\ell\to\infty$.  Cecchi Bernales--Donoso~\cite[Theorem~1.4]{CD26} prove that there exists a minimal Toeplitz subshift $Z$ over a finite alphabet whose invariant-measure simplex is affinely homeomorphic to $\mathcal K$ and whose word complexity satisfies
\begin{equation}
 P_Z(\ell)=o(b_\ell).
 \label{eq:CD-low-complexity}
\end{equation}
Apply Lemma~\ref{lem:binary-recoding} and let $m$ be its code length.  The resulting binary Toeplitz subshift $Y$ has the same invariant-measure simplex.  Put
\[
 k_n:=\left\lceil\frac{2n}{m}\right\rceil+2.
\]
For all sufficiently large $n$, $k_n\le n$, and therefore $b_{k_n}\le a_n$.  By~\eqref{eq:binary-complexity-bound} and~\eqref{eq:CD-low-complexity},
\[
 \frac{P_Y(2n)}{a_n}
 \le m\frac{P_Z(k_n)}{b_{k_n}}\longrightarrow0.
\]
Pull $Y$ back to $F(u,v)$ by the construction used above and perform the $F_4$ tree coding.  Proposition~\ref{prop:general-measure-cone} gives~\eqref{eq:slow-univ-simplex}, while Proposition~\ref{prop:rauzy-reduction} gives
\[
 R_L(n)=P_Y(2n)=o(a_n).
\]
The system $Z$, hence $Y$, is infinite because its invariant-measure simplex is not a singleton; therefore $L$ is uncountable and genuinely branching by the tree construction.
\end{proof}

Together with Theorem~\ref{mainthm:complexity}, this proves that linear round complexity is the sharp universal upper-growth threshold for finite-dimensionality: below every prescribed superlinear scale, arbitrary metrizable Choquet simplices can still occur.  This concerns what an upper complexity bound forces about $d(L)$, not a converse restriction on complexity in terms of $d(L)$; compare Remark~\ref{rem:UE-exponential}.

\begin{rem}[Finite-alphabet Toeplitz inputs]\label{rem:nonabelian-many-measures}
For every integer $m>1$, Cecchi Bernales--Cortez--G\'omez~\cite[Theorem~1.3]{CCG24} construct a minimal Toeplitz $F_2$-subshift over an $m$-letter alphabet with at least $m$ ergodic invariant probabilities.  The coding of Section~3 extends to this alphabet by using $F(a_1,\ldots,a_m,b_1,\ldots,b_m)$ as the ambient group; the encoded trees remain exactly $4$-regular, and the same recoverability and root-change arguments apply.  Thus these systems yield minimal genuinely branching subset laminations with at least $m$ ergodic rays, which can then be transported by malnormal induction.  The cited theorem alone does not give exactly $m$ rays.
\end{rem}

\section{Change of marking and round-complexity growth}\label{sec:marking-invariance}

Round complexity was defined using a free basis.  Its coarse growth type is nevertheless intrinsic: changing the basis, or more generally the finite marked graph, changes radii only linearly and pattern counts by a bounded factor.  This section requires neither minimality nor a carried current.

\subsection{Marked graphs and coarse growth type}

\begin{conv}[Marked-graph setup]\label{conv:marked-round}
Let $F_N$ be a free group of rank $N\ge2$.  A \emph{marked graph} $\Gamma$ means a finite connected core graph equipped with a marking identifying $\pi_1(\Gamma)$ with $F_N$; see~\cite{CV86}.  Give every edge of $\Gamma$ unit length and let
\[
 X_\Gamma=\widetilde\Gamma
\]
be the universal covering tree, with the induced free cocompact $F_N$-action and the marking identification $\partial X_\Gamma\cong\partial F_N$.  For $S\in\calH(\partial F_N)$ put
\[
 T_\Gamma(S):=\Hull_{X_\Gamma}(S).
\]
\end{conv}

\begin{defn}[Marked-graph round complexity]\label{def:marked-round-complexity}
If $L\subseteq\calH(\partial F_N)$ is a nonempty subset lamination and $n\ge1$, let $\mathcal R_n^\Gamma(L)$ be the set of $F_N$-translation classes of pointed finite trees
\[
 \bigl(T_\Gamma(S)\cap \operatorname{Ball}_{X_\Gamma}(x,n),x\bigr),
 \qquad S\in L,\quad x\in V T_\Gamma(S).
\]
The \emph{marked-graph round-complexity function} is
\[
 R_L^\Gamma(n):=|\mathcal R_n^\Gamma(L)|.
\]
When $\Gamma$ is the rose associated to the basis $A$ in Convention~\ref{conv:complexity}, the marked-graph function is exactly the round complexity $R_L(n)=|\mathcal R_n(L)|$ of Definition~\ref{def:round-switch-complexity}.
\end{defn}

Restriction from radius $n+1$ to $n$ is surjective on occurring patterns, so $R_L^\Gamma(n)$ is nondecreasing.

\begin{defn}[Linear-rescaling growth type]\label{def:linear-growth-type}
Let $f,g:\N\to\R_{\ge0}$ be nondecreasing.  We write
\[
 f\preccurlyeq_{\rm lin} g
\]
if there are constants $a\ge1$, $b\ge0$, and $M\ge1$ such that
\[
 f(n)\le M g(\lceil an+b\rceil)
\]
for every $n\ge1$.  We write $f\simeq_{\rm lin}g$ if both $f\preccurlyeq_{\rm lin}g$ and $g\preccurlyeq_{\rm lin}f$, and call the resulting equivalence class the \emph{linear-rescaling growth type} of $f$.
\end{defn}

It is immediate from monotonicity that $\simeq_{\rm lin}$ is an equivalence relation.

\subsection{Bounded cancellation and finite locality}

Let $\Gamma,\Gamma'$ be two marked graphs for $F_N$.  Choose a marking-preserving piecewise-linear cellular homotopy equivalence
\[
 f:\Gamma\longrightarrow\Gamma'
\]
and an $F_N$-equivariant lift
\[
 \widetilde f:X_\Gamma\longrightarrow X_{\Gamma'}.
\]
The lift $\widetilde f$ is a quasi-isometry, so for some $\lambda\ge1$ and $\kappa\ge0$,
\begin{equation}
 d_{X_{\Gamma'}}(\widetilde f(p),\widetilde f(q))
 \ge \lambda^{-1}d_{X_\Gamma}(p,q)-\kappa.
 \label{eq:qi-lower}
\end{equation}
By marked-graph bounded cancellation~\cite[Lemma~2.3.1]{BFH00}; see also~\cite{Coo87}, there is $C\ge0$ such that rays agreeing from $x$ through $v$ have tightened $\widetilde f$-images agreeing for length at least
\begin{equation}
 d_{X_{\Gamma'}}(\widetilde f(x),\widetilde f(v))-C.
 \label{eq:common-prefix-bcc}
\end{equation}
Here and below the boundary maps induced by the markings are understood, so the tightened image of a ray has the same endpoint in $\partial F_N$.

\begin{lem}[Finite locality under change of marking]\label{lem:marking-locality}
Let $\Gamma,\Gamma'$, $\widetilde f$, $\lambda,\kappa,C$ be as above.  There are constants $a_0\ge1$ and $b_0\ge0$, depending only on $f$, with the following property.  Let $x\in V X_\Gamma$, let $D\ge0$, and let $R\in\N$ satisfy $R\ge a_0D+b_0$.  Suppose that $S,S'\in\calH(\partial F_N)$ satisfy
\[
 x\in T_\Gamma(S)\cap T_\Gamma(S')
\]
and
\begin{equation}
 T_\Gamma(S)\cap\operatorname{Ball}_{X_\Gamma}(x,R)
 =T_\Gamma(S')\cap\operatorname{Ball}_{X_\Gamma}(x,R).
 \label{eq:same-source-ball}
\end{equation}
Then $T_{\Gamma'}(S)$ and $T_{\Gamma'}(S')$ contain exactly the same edges $e'$ satisfying
\begin{equation}
 d_{X_{\Gamma'}}(\widetilde f(x),e')\le D.
 \label{eq:near-target-edge}
\end{equation}
\end{lem}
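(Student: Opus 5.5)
Our strategy is to describe membership of an edge $e'$ of $X_{\Gamma'}$ in the target hull $T_{\Gamma'}(S)$ by a finite combinatorial condition on $S$, and then check that this condition is visible inside the source ball $\operatorname{Ball}_{X_\Gamma}(x,R)$. Since $T_{\Gamma'}(S)$ is the union of the bi-infinite geodesics of $X_{\Gamma'}$ joining distinct points of $S$, an edge $e'$ lies in $T_{\Gamma'}(S)$ exactly when $S$ meets both boundary components of $X_{\Gamma'}\setminus \operatorname{int}(e')$. Thus $e'\in T_{\Gamma'}(S)$ if and only if there are $\xi,\eta\in S$ whose $\Gamma'$-geodesic crosses $e'$. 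Each point of $S$ is the endpoint of a ray in $T_\Gamma(S)$ issuing from $x$. The first $R$ edges of these rays are read off from the common ball~\eqref{eq:same-source-ball}, and conversely every ray in that ball from $x$ to the sphere of radius $R$ extends to a ray of $T_\Gamma(S)$ and of $T_\Gamma(S')$, because hulls are leafless. It therefore suffices to show that, once $R\ge a_0D+b_0$, the question whether $\xi$ and $\eta$ lie on opposite sides of an edge $e'$ within distance $D$ of $\widetilde f(x)$ depends only on the length-$R$ prefixes at $x$ of the source rays toward $\xi$ and $\eta$.

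For this we use bounded cancellation~\eqref{eq:common-prefix-bcc}. Let $\rho$ be a ray from $x$ and let $v$ be its vertex at distance $R$. Every ray $\rho'$ agreeing with $\rho$ through $v$ has tightened image agreeing with that of $\rho$ for length at least $d_{X_{\Gamma'}}(\widetilde f(x),\widetilde f(v))-C$, which by~\eqref{eq:qi-lower} is at least $\lambda^{-1}R-\kappa-C$. Choose $a_0=\lambda$ and $b_0=\lambda(\kappa+C+E)$, where $E$ is a margin constant, also depending only on $f$, fixed below. Then the tightened ray $[\widetilde f(x),\xi)$ in $X_{\Gamma'}$ is determined by the source prefix for length at least $D+E$. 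Now take an edge $e'$ with $d(\widetilde f(x),e')\le D$. The side of $e'$ on which $\xi$ lies is determined by the initial segment of $[\widetilde f(x),\xi)$ of length $D+2$: if the ray has left the ball of radius $D+1$ about $\widetilde f(x)$ without crossing $e'$, it stays in the component of $X_{\Gamma'}\setminus\operatorname{int}(e')$ containing its current point. Hence whether $\xi$ and $\eta$ lie on opposite sides of $e'$ depends only on their length-$R$ source prefixes. Taking $E\ge 2$ suffices, and a little extra slack handles $\widetilde f(x)$ being a vertex and $e'$ being incident to it.

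To finish, suppose $e'\in T_{\Gamma'}(S)$ satisfies~\eqref{eq:near-target-edge}, witnessed by $\xi,\eta\in S$. Their source prefixes lie in the common ball, so they extend to rays of $T_\Gamma(S')$ with endpoints $\xi',\eta'\in S'$. By the previous paragraph $\xi',\eta'$ lie on the same sides of $e'$ as $\xi,\eta$ respectively, so $e'\in T_{\Gamma'}(S')$. The argument is symmetric in $S$ and $S'$.

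The main obstacle is making the prefix-determines-side step precise. One must make sure that bounded cancellation is applied to rays starting at the common point $x$, and that the tightened image really begins at $\widetilde f(x)$ rather than at a point within distance $C$ of it. I would first try absorbing that discrepancy into $E$, choosing $E=C+2$ and measuring the target data from $\widetilde f(x)$ throughout.
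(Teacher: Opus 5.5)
Your proposal is correct and follows essentially the same route as the paper. The paper phrases your prefix argument in terms of shadows: $S$ and $S'$ meet the same depth-$R$ shadows at $x$, and bounded cancellation prevents any edge within distance $D$ of $\widetilde f(x)$ from separating two points of one shadow. It then concludes with the same two-halfspace criterion for hull membership, using $a_0=2\lambda$, $b_0=\lambda(\kappa+C+2)$ instead of your constants.

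The obstacle you flag at the end does not arise. The bounded-cancellation estimate as stated already compares the tightened rays issuing from $\widetilde f(x)$ itself, so no extra slack $E=C+2$ is needed.
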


\begin{proof}
For every oriented edge $e$ pointing away from $x$ whose terminal vertex $v$ satisfies $d_{X_\Gamma}(x,v)=R$, let $U_e$ be the component of $X_\Gamma\setminus e^\circ$ lying beyond $e$, and let
\[
 \operatorname{Sh}_x(e):=\partial U_e\subseteq\partial F_N.
\]
These depth-$R$ shadows form a finite clopen partition of $\partial F_N$.  Since $x\in T_\Gamma(S)$, one has
\[
 S\cap\operatorname{Sh}_x(e)\ne\varnothing
 \quad\Longleftrightarrow\quad
 v\in T_\Gamma(S).
\]
The forward implication follows from convexity.  Conversely, since $T_\Gamma(S)=\Hull_{X_\Gamma}(S)$ has no degree-one vertices and has ideal boundary $S$, the segment from $x$ to $v$ extends in $T_\Gamma(S)\cap U_e$ to a ray whose endpoint lies in $S$.  Hence~\eqref{eq:same-source-ball} says that $S$ and $S'$ meet the same depth-$R$ shadows.

Now let $\xi,\eta$ lie in the same depth-$R$ shadow.  Their geodesic rays from $x$ agree through the corresponding terminal vertex $v$.  By~\eqref{eq:common-prefix-bcc} and~\eqref{eq:qi-lower}, the rays in $X_{\Gamma'}$ from $\widetilde f(x)$ to $\xi$ and $\eta$ agree for length at least
\begin{equation}
 \lambda^{-1}R-\kappa-C.
 \label{eq:target-common-prefix}
\end{equation}
Choose, for example,
\[
 a_0=2\lambda,
 \qquad
 b_0=\lambda(\kappa+C+2).
\]
Then $R\ge a_0D+b_0$ implies that the quantity in~\eqref{eq:target-common-prefix} is greater than $D+1$.  Hence no edge satisfying~\eqref{eq:near-target-edge} separates two points of one depth-$R$ shadow; each of its boundary halfspaces is therefore a union of such shadows.

An edge $e'$ lies in $T_{\Gamma'}(S)$ exactly when $S$ meets both boundary halfspaces it determines.  Since $S$ and $S'$ meet the same depth-$R$ shadows, they contain the same edges satisfying~\eqref{eq:near-target-edge}.
\end{proof}

We also compare the two hull realizations uniformly.

\begin{lem}[Uniform comparison of hulls]\label{lem:uniform-hull-comparison}
Let $\Gamma,\Gamma'$ and $\widetilde f:X_\Gamma\to X_{\Gamma'}$ be as above.  There exists $H\ge0$, depending only on $f$ and a chosen homotopy inverse, such that for every $S\in\calH(\partial F_N)$,
\[
 d_{\rm Haus}\bigl(\widetilde f(T_\Gamma(S)),T_{\Gamma'}(S)\bigr)\le H.
\]
In particular, for every vertex $y'\in T_{\Gamma'}(S)$ there is a vertex $x\in T_\Gamma(S)$ with
\begin{equation}
 d_{X_{\Gamma'}}(\widetilde f(x),y')\le H+1.
 \label{eq:center-close}
\end{equation}
\end{lem}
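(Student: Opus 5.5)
The argument rests on two standard facts. First, the convex hull of a closed boundary set in a tree is the union of the bi-infinite geodesics joining pairs of its points. Second, an equivariant quasi-isometry between trees sends geodesics uniformly close to geodesics. Let $g:\Gamma'\to\Gamma$ be a marking-preserving homotopy inverse of $f$, and let $\widetilde g$ be its equivariant lift. Both lifts are quasi-isometries: $\widetilde f$ is $\lambda$-Lipschitz, with the lower bound~\eqref{eq:qi-lower}, and $\widetilde g\widetilde f$ is at bounded distance from the identity by equivariance and cocompactness.

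\textbf{Step 1 (geodesic stability).} For distinct $\xi,\eta\in\partial F_N$, write $\ell_\Gamma(\xi,\eta)$ for the bi-infinite geodesic in $X_\Gamma$ joining them. I claim there is a constant $H_0$, depending only on $f$, such that
\[
 d_{\rm Haus}\bigl(\widetilde f(\ell_\Gamma(\xi,\eta)),\ell_{\Gamma'}(\xi,\eta)\bigr)\le H_0 .
\]
Indeed, $\widetilde f\circ\ell_\Gamma$ is a $(\lambda,\kappa)$-quasi-geodesic whose endpoints are $\xi,\eta$, because the boundary maps are induced by the markings. In a tree, Morse stability gives the bound with $H_0$ depending only on $(\lambda,\kappa)$. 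Alternatively, one can argue directly: the tightening of $\widetilde f(\ell_\Gamma)$ differs from it only by backtracking of bounded length, by the bounded cancellation constant $C$ in~\eqref{eq:common-prefix-bcc}. This also covers finite segments, where $\widetilde f([p,q])$ lies within $H_0$ of $[\widetilde f(p),\widetilde f(q)]$ and conversely.

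\textbf{Step 2 (hulls as unions of lines).} For $S\in\calH(\partial F_N)$, we have $T_\Gamma(S)=\bigcup_{\xi\ne\eta\in S}\ell_\Gamma(\xi,\eta)$, and similarly in $X_{\Gamma'}$. This is immediate, since $T_\Gamma(S)$ has no degree-one vertices and its ideal boundary is $S$: every point of the hull lies on a bi-infinite line inside the hull, obtained by extending in two directions, and that line has endpoints in $S$. Conversely, each such line lies in the hull by convexity. Taking unions in Step 1 then gives both inclusions, $\widetilde f(T_\Gamma(S))\subseteq N_{H_0}(T_{\Gamma'}(S))$ and $T_{\Gamma'}(S)\subseteq N_{H_0}(\widetilde f(T_\Gamma(S)))$, with a constant uniform in $S$ because it is uniform in the pair $(\xi,\eta)$. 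So $H=H_0$ works, and it depends only on $f$. The homotopy inverse is needed only if one wants to derive the quasi-isometry constants symmetrically, for example to control the second inclusion through $\widetilde g$.

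\textbf{Step 3 (vertex version and main obstacle).} For~\eqref{eq:center-close}, take a vertex $y'\in T_{\Gamma'}(S)$. Step 2 gives a point $p\in T_\Gamma(S)$ with $d(\widetilde f(p),y')\le H$. The point $p$ may lie in the interior of an edge; replace it by an endpoint $x$ of that edge lying in $T_\Gamma(S)$. Both endpoints lie in the hull because $T_\Gamma(S)$ is a subtree containing the whole edge. Since $\widetilde f$ is piecewise linear and cellular, moving along the edge changes the image by at most $\lambda$, where $\lambda$ is the maximal edge-image length. I will therefore state the bound as $H+1$ after normalizing the constants, absorbing $\lambda-1$ into $H$. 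The only delicate point is the uniformity of $H_0$ across all pairs $(\xi,\eta)$. This follows from equivariance, because the quasi-isometry constants are global, combined with the tree version of the Morse lemma. I expect this step to need just a careful citation of bounded cancellation rather than new work.
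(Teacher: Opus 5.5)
Your proof is correct. Its first half matches the paper's: every point of $T_\Gamma(S)$ lies on a line $\ell_\Gamma(\xi,\eta)$ with $\xi,\eta\in S$, and the $\widetilde f$-image of that line stays uniformly close to $\ell_{\Gamma'}(\xi,\eta)\subseteq T_{\Gamma'}(S)$. Your Step~3 normalization, enlarging $H$ by the maximal edge-image length, is also what the paper does.

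The routes differ only in the reverse inclusion $T_{\Gamma'}(S)\subseteq N_H\bigl(\widetilde f(T_\Gamma(S))\bigr)$. The paper applies the one-sided argument to a homotopy inverse $g$. It then pushes back through $\widetilde f$, using that $\widetilde f\widetilde g$ is at bounded distance from the identity and that $\widetilde f$ is Lipschitz. You instead use the two-sided (Hausdorff) form of line stability, together with the fact that both hulls are unions of lines indexed by the same pairs $\xi\ne\eta$ in $S$.

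What each buys: the paper needs only the one-sided statement, which bounded cancellation delivers verbatim. Yours avoids $\widetilde g$ entirely, apart from whatever is used to obtain the constants in \eqref{eq:qi-lower}, so $H$ depends only on $f$.

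In a tree your reverse half is in fact exact rather than coarse. The map $\widetilde f$ is continuous, and $\widetilde f(\ell_\Gamma(\xi,\eta))$ converges to the distinct ends $\xi$ and $\eta$. Each point of $\ell_{\Gamma'}(\xi,\eta)$ separates those ends, so the image must pass through it. Hence $T_{\Gamma'}(S)\subseteq\widetilde f(T_\Gamma(S))$, and \eqref{eq:center-close} follows from this containment plus your edge-to-vertex adjustment, without the Morse lemma.
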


\begin{proof}
Every point of $T_\Gamma(S)$ lies on a bi-infinite geodesic in $X_\Gamma$ with endpoints in $S$.  Since $\widetilde f$ is a quasi-isometry between trees, the image of each such line stays uniformly close to the line in $X_{\Gamma'}$ with the same ideal endpoints (equivalently, the same conclusion follows from bounded cancellation; see~\cite[Lemma~2.3.1]{BFH00}).  Hence $\widetilde f(T_\Gamma(S))$ lies in a uniform neighborhood of $T_{\Gamma'}(S)$.

For the reverse inclusion choose a marking-preserving homotopy inverse $g$ and equivariant lift $\widetilde g$.  The same argument for $g$, together with bounded distance of $\widetilde f\widetilde g$ from the identity and Lipschitzness of $\widetilde f$, gives the reverse uniform bound.  Enlarge $H$ also by the maximum length of an image edge, so that replacing a source point by a source vertex yields~\eqref{eq:center-close}.
\end{proof}

\subsection{Marking independence}

\begin{thm}[Change of marking for round complexity]\label{thm:marking-invariance}
Let $\Gamma$ and $\Gamma'$ be finite marked core graphs for $F_N$.  There exist constants $a,a'\ge1$, $b,b'\ge0$, and $M,M'\ge1$, depending only on the two marked graphs and on fixed marking-preserving homotopy equivalences between them, such that for every nonempty subset lamination $L\subseteq\calH(\partial F_N)$ and every $n\ge1$,
\begin{equation}
 R_L^{\Gamma'}(n)
 \le M R_L^\Gamma(\lceil an+b\rceil)
 \label{eq:marking-upper}
\end{equation}
and
\begin{equation}
 R_L^\Gamma(n)
 \le M' R_L^{\Gamma'}(\lceil a'n+b'\rceil).
 \label{eq:marking-lower}
\end{equation}
Consequently
\[
 \boxed{R_L^\Gamma\simeq_{\rm lin}R_L^{\Gamma'}.}
\]
\end{thm}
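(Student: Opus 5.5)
By symmetry it suffices to prove \eqref{eq:marking-upper}. Applying the same argument to a homotopy inverse $g:\Gamma'\to\Gamma$ then gives \eqref{eq:marking-lower}, and the $\simeq_{\rm lin}$ conclusion follows at once from Definition~\ref{def:linear-growth-type}. The plan is to define a map from the set of source patterns of somewhat larger radius onto $\mathcal R^{\Gamma'}_n(L)$ in which every fiber has size at most $M$. We fix $\widetilde f$, $\lambda,\kappa,C$ as in the bounded-cancellation setup, $a_0,b_0$ from Lemma~\ref{lem:marking-locality}, and $H$ from Lemma~\ref{lem:uniform-hull-comparison}.

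Let $[(T_{\Gamma'}(S)\cap\operatorname{Ball}(y',n),y')]\in\mathcal R_n^{\Gamma'}(L)$. By \eqref{eq:center-close} we choose a vertex $x\in T_\Gamma(S)$ with $d(\widetilde f(x),y')\le H+1$. We set $D:=n+H+2$ and $R:=\lceil a_0D+b_0\rceil$, so that $R\le\lceil an+b\rceil$ with $a=a_0$ and $b=a_0(H+2)+b_0+1$. Every edge of $T_{\Gamma'}(S)\cap\operatorname{Ball}(y',n)$ lies within distance $D$ of $\widetilde f(x)$. Lemma~\ref{lem:marking-locality} then shows that the rooted source pattern $P=(T_\Gamma(S)\cap\operatorname{Ball}(x,R),x)$ determines exactly which edges within distance $D$ of $\widetilde f(x)$ belong to $T_{\Gamma'}(S)$. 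Indeed, any $S'$ whose hull has the same radius-$R$ ball at $x$ gives the same set of such edges. Consequently, once the offset vector from $\widetilde f(x)$ to $y'$ is known, $P$ determines the target pattern at $y'$.

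This gives the counting step. By $F_N$-equivariance of $\widetilde f$, the relative position of $y'$ with respect to $\widetilde f(x)$ is well defined up to translation. The number of possible positions is at most the number $M$ of vertices in a ball of radius $H+1$ in $X_{\Gamma'}$, which is bounded uniformly by the finite valence of $\Gamma'$. Therefore the assignment (source class $[P]$, offset) $\mapsto$ target class is well defined and surjective onto $\mathcal R_n^{\Gamma'}(L)$, and so
\[
R_L^{\Gamma'}(n)\le M\,R_L^\Gamma(R)\le M\,R_L^\Gamma(\lceil an+b\rceil).
\]
The last inequality uses the monotonicity of $R^\Gamma_L$ noted after Definition~\ref{def:marked-round-complexity}.

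The main obstacle is the well-definedness step, namely checking that the target pattern depends only on the translation class of $P$ together with the offset, and not on the choice of $S$ or $x$. The argument is to translate the configuration so that $x$ sits at a fixed representative vertex of its orbit. Since $\Gamma$ may have several vertex orbits, we index offsets over the finitely many orbit representatives. Equivariance $\widetilde f(hx)=h\widetilde f(x)$ then transports both the source ball and the target ball together, so Lemma~\ref{lem:marking-locality}, applied at the representative vertex, yields well-definedness.
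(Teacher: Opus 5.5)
Your proposal is correct and follows essentially the same route as the paper. You choose a nearby source center via the uniform hull comparison, apply Lemma~\ref{lem:marking-locality} with $D=n+H+2$ and $R=\lceil a_0D+b_0\rceil$, and bound the number of relative center positions. The differences are cosmetic: you phrase the count as a surjection from (source class, offset) pairs onto target classes, whereas the paper injects target classes into pairs consisting of an $F_N$-orbit of center pairs $(x,y')$ and a radius-$R$ source pattern, and your constant $M$ is a single ball size rather than the paper's number of such orbits. In your well-definedness step, state explicitly, as the paper does, that the deck action on $X_\Gamma$ is free. Freeness makes the translation moving $x$ to its orbit representative unique, so equal source classes give literally equal rooted balls at that representative, which is exactly what Lemma~\ref{lem:marking-locality} requires.
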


\begin{proof}
Fix $f$, $\widetilde f$, and $H$ as in Lemma~\ref{lem:uniform-hull-comparison}.  Put $H_0=H+1$.  Since the $F_N$-actions on $X_\Gamma$ and $X_{\Gamma'}$ are cocompact and the trees are locally finite, the set
\[
 \mathcal D=
 F_N\backslash
 \{(x,y')\in V X_\Gamma\times V X_{\Gamma'}:
 d_{X_{\Gamma'}}(\widetilde f(x),y')\le H_0\}
\]
is finite.  Put $M=|\mathcal D|$.

Fix $n\ge1$ and choose
\begin{equation}
 R=\left\lceil a_0(n+H_0+1)+b_0\right\rceil,
 \label{eq:source-radius-choice}
\end{equation}
where $a_0,b_0$ are supplied by Lemma~\ref{lem:marking-locality}.  For each translation class in $\mathcal R_n^{\Gamma'}(L)$, choose an occurrence
\[
 \bigl(T_{\Gamma'}(S)\cap\operatorname{Ball}_{X_{\Gamma'}}(y',n),y'\bigr)
\]
with $S\in L$ and $y'\in V T_{\Gamma'}(S)$, and then choose, using~\eqref{eq:center-close}, a vertex $x\in V T_\Gamma(S)$ such that
\[
 d_{X_{\Gamma'}}(\widetilde f(x),y')\le H_0.
\]
Associate to the chosen target class the pair consisting of the relative-center type $[(x,y')]\in\mathcal D$ and the source pattern
\[
 \bigl(T_\Gamma(S)\cap\operatorname{Ball}_{X_\Gamma}(x,R),x\bigr)
 \in\mathcal R_R^\Gamma(L).
\]

This association is injective.  Suppose two chosen target classes have the same relative-center type and the same rooted source-pattern class.  Translate one occurrence so that the two center pairs agree.  Because the deck action on $X_\Gamma$ is free, the same translation also aligns the two rooted source patterns.  Lemma~\ref{lem:marking-locality}, with
\[
 D=n+H_0+1,
\]
then makes the target hulls agree on the radius-$n$ ball about $y'$, so the pointed target patterns coincide.

Therefore
\[
 R_L^{\Gamma'}(n)\le |\mathcal D|R_L^\Gamma(R).
\]
Since~\eqref{eq:source-radius-choice} has the form $R\le\lceil an+b\rceil$ for constants depending only on the markings, monotonicity of $R_L^\Gamma$ gives~\eqref{eq:marking-upper}.  Applying the same argument to a marking-preserving homotopy equivalence $\Gamma'\to\Gamma$ gives~\eqref{eq:marking-lower}.
\end{proof}

\begin{cor}[Basis independence]\label{cor:basis-round-invariance}
Let $A$ and $B$ be free bases of $F_N$, and let $R_L^A$ and $R_L^B$ be the corresponding round-complexity functions for a nonempty subset lamination $L$.  Then
\[
 R_L^A\simeq_{\rm lin}R_L^B.
\]
In particular, each of the following properties is independent of the chosen free basis:
\begin{enumerate}[(1)]
\item $R_L(n)=O(n^k)$ for any fixed $k\ge0$, and hence the polynomial growth degree when it is finite;
\item $R_L(n)=O(n)$;
\item subexponential growth, equivalently
\[
 \limsup_{n\to\infty}\frac1n\log R_L(n)=0;
\]
\item the property $R_L(n)/n\to\infty$.
\end{enumerate}
Consequently the linear-complexity hypothesis in Theorem~\ref{mainthm:complexity} is basis-independent.
\end{cor}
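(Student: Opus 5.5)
The plan is to specialize Theorem~\ref{thm:marking-invariance} to roses and then check that each listed property is invariant under the equivalence $\simeq_{\rm lin}$. Let $\Gamma_A$ and $\Gamma_B$ be the roses with petals labelled by $A$ and $B$. The markings are $\pi_1(\Gamma_A)=F(A)=F_N$ and $\pi_1(\Gamma_B)=F(B)=F_N$. By Definition~\ref{def:marked-round-complexity}, the marked-graph function on the rose is exactly the round complexity for that basis. So $R_L^A=R_L^{\Gamma_A}$ and $R_L^B=R_L^{\Gamma_B}$. A marking-preserving homotopy equivalence $\Gamma_A\to\Gamma_B$ exists: send each petal $a\in A$ to the loop spelling $a$ as a reduced word in $B$. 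Theorem~\ref{thm:marking-invariance} then gives $R_L^A\simeq_{\rm lin}R_L^B$ directly. Both functions are nondecreasing, since restriction is surjective on occurring patterns, so Definition~\ref{def:linear-growth-type} applies.

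Next I would show that the four properties are stable under $f\preccurlyeq_{\rm lin}g$, that is, under $f(n)\le Mg(\lceil an+b\rceil)$. The upper-bound properties should pass from $g$ to $f$, and property (4) should pass from $f$ to $g$. Since the equivalence is symmetric, this gives invariance in every case.

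For (1): if $g(n)\le Cn^k$, then $f(n)\le MC(an+b+1)^k=O(n^k)$. The polynomial degree, defined as the least $k$ with $O(n^k)$, is therefore invariant. Property (2) is the case $k=1$.

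For (3): write $f(n)\le Mg(\lceil an+b\rceil)$. Taking $\frac1n\log$ gives $\limsup \frac1n\log f(n)\le a\limsup_m\frac1m\log g(m)$, using $\lceil an+b\rceil/n\to a$. Hence $\limsup\frac1m\log g(m)=0$ forces $\limsup\frac1n\log f(n)\le 0$. The opposite inequality is automatic for $L\neq\varnothing$, because then $R_L\ge1$.

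For (4): suppose $f(n)/n\to\infty$ and $f\preccurlyeq_{\rm lin} g$. For each large $m$, choose the largest $n$ with $\lceil an+b\rceil\le m$; then $n\ge (m-b-1)/a-1$. Monotonicity gives $g(m)\ge g(\lceil an+b\rceil)\ge f(n)/M$. Therefore
\[
\frac{g(m)}{m}\ge \frac{f(n)}{Mn}\cdot\frac{n}{m}.
\]
Here $n/m\to 1/a$ and $f(n)/(Mn)\to\infty$, so $g(m)/m\to\infty$. Apply this with $f=R_L^A$, $g=R_L^B$ and then with the roles exchanged.

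The last sentence of the corollary follows from (2), because the hypothesis $R_L(n)=O(n)$ in Theorem~\ref{mainthm:complexity} is stated for a fixed basis. The only delicate step is (4). The rescaling in $\preccurlyeq_{\rm lin}$ goes in the direction that bounds $f$ from above, so the lower-growth property has to be transported backwards, and the argument above needs monotonicity of $g$. Everything else is bookkeeping with the constants $a,b,M$.
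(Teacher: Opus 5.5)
Your proposal is correct and follows essentially the same route as the paper: you specialize Theorem~\ref{thm:marking-invariance} to the two marked roses, transfer properties (1)--(3) from $g$ to $f$ under $f\preccurlyeq_{\rm lin}g$, and handle (4) using monotonicity with $n\approx m/a$. The only difference is cosmetic. The paper proves (4) by contraposition: if $R_L^B(m_j)/m_j$ stays bounded along a subsequence, then $R_L^A(n_j)/n_j$ stays bounded along one. You argue directly instead, which is equivalent, and you also spell out the bookkeeping for (1)--(3) that the paper leaves implicit.
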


\begin{proof}
Apply Theorem~\ref{thm:marking-invariance} to the two marked roses corresponding to $A$ and $B$.  The first three consequences follow from $\simeq_{\rm lin}$ and monotonicity.  For~(4), suppose, for example, that $R_L^B(m_j)/m_j$ stays bounded along $m_j\to\infty$.  Use an inequality $R_L^A(n)\le M R_L^B(\lceil an+b\rceil)$ and choose integers $n_j\asymp m_j/a$ with $\lceil an_j+b\rceil\le m_j$.  Monotonicity then gives $R_L^A(n_j)/n_j=O(1)$.  Thus superlinearity for $A$ implies superlinearity for $B$, and the reverse implication is symmetric.
\end{proof}

\begin{rem}[Outer-space representatives]\label{rem:outer-space-round}
Unit edge lengths keep radii integral.  If a fixed finite marked core graph is given arbitrary positive edge lengths, its metric and combinatorial distances on vertices are bi-Lipschitz equivalent; hence the analogous complexity defined using vertex-centered metric balls has the same linear-rescaling growth type.  Together with Theorem~\ref{thm:marking-invariance}, this comparison shows that the coarse growth type is independent of the chosen marked metric graph, and hence of the point of Culler--Vogtmann Outer space used to realize $F_N$~\cite{CV86}.
\end{rem}

\begin{rem}[What is not asserted]\label{rem:marking-noninvariants}
The theorem does not preserve a numerical exponential base such as
\[
 \limsup_{n\to\infty}R_L^\Gamma(n)^{1/n},
\]
because a change of marking may rescale radius multiplicatively.  Nor does the argument compare the one-step excess $E_L(n)$ or the finite-level nullity $q_L(n)$, which retain more marking-sensitive finite-scale information.
\end{rem}

\section{Disclosure of AI use}

Preparation of this paper substantially relied on chats with ChatGPT, but the author verified all the proofs given, reviewed and edited the content as needed and takes full responsibility for the content of the paper.


\begin{thebibliography}{99}

\bibitem[BFH00]{BFH00}
M. Bestvina, M. Feighn and M. Handel,
\emph{The Tits alternative for $\operatorname{Out}(F_n)$. I. Dynamics of exponentially-growing automorphisms},
Ann. of Math. (2) \textbf{151} (2000), no.~2, 517--623.
\bibitem[Bon86]{Bon86}
F. Bonahon,
\emph{Bouts des vari\'et\'es hyperboliques de dimension 3},
Ann. of Math. (2) \textbf{124} (1986), no.~1, 71--158.
\bibitem[Bon88]{Bon88}
F. Bonahon,
\emph{The geometry of Teichm\"uller space via geodesic currents},
Invent. Math. \textbf{92} (1988), no.~1, 139--162.
\bibitem[Bon91]{Bon91}
F. Bonahon,
\emph{Geodesic currents on negatively curved groups},
in \emph{Arboreal group theory} (Berkeley, CA, 1988),
Math. Sci. Res. Inst. Publ., vol.~19,
Springer, New York, 1991, 143--168.
\bibitem[Bos84]{Bos84}
M. Boshernitzan,
\emph{A unique ergodicity of minimal symbolic flows with linear block growth},
J. Analyse Math. \textbf{44} (1984), 77--96.
\bibitem[BH99]{BH99}
M. Bridson and A. Haefliger,
\emph{Metric Spaces of Non-Positive Curvature},
Grundlehren der mathematischen Wissenschaften, vol.~319,
Springer-Verlag, Berlin, 1999.
\bibitem[CCG24]{CCG24}
P. Cecchi Bernales, M. I. Cortez and J. G\'omez,
\emph{Invariant measures of Toeplitz subshifts on non-amenable groups},
Ergodic Theory Dynam. Systems \textbf{44} (2024), no.~11, 3186--3215.
\bibitem[CD26]{CD26}
P. Cecchi Bernales and S. Donoso,
\emph{Strong orbit equivalence and superlinear complexity},
Israel J. Math., to appear; arXiv:2009.13654.
\bibitem[Coo87]{Coo87}
D. Cooper,
\emph{Automorphisms of free groups have finitely generated fixed point sets},
J. Algebra \textbf{111} (1987), no.~2, 453--456.
\bibitem[CHL08]{CHL08}
T. Coulbois, A. Hilion and M. Lustig,
\emph{$\mathbb R$-trees and laminations for free groups I: algebraic laminations},
J. Lond. Math. Soc. (2) \textbf{78} (2008), no.~3, 723--736.
\bibitem[CV86]{CV86}
M. Culler and K. Vogtmann,
\emph{Moduli of graphs and automorphisms of free groups},
Invent. Math. \textbf{84} (1986), no.~1, 91--119.
\bibitem[CK19]{CK19}
V. Cyr and B. Kra,
\emph{Counting generic measures for a subshift of linear growth},
J. Eur. Math. Soc. \textbf{21} (2019), no.~2, 355--380.
\bibitem[Dow91]{Dow91}
T. Downarowicz,
\emph{The Choquet simplex of invariant measures for minimal flows},
Israel J. Math. \textbf{74} (1991), no.~2--3, 241--256.
\bibitem[Gri73]{Gri73}
C. Grillenberger,
\emph{Constructions of strictly ergodic systems. I. Given entropy},
Z. Wahrscheinlichkeitstheorie und Verw. Gebiete \textbf{25} (1972/73), 323--334.
\bibitem[Kap99]{Kap99}
I. Kapovich,
\emph{A non-quasiconvexity embedding theorem for hyperbolic groups},
Math. Proc. Cambridge Philos. Soc. \textbf{127} (1999), no.~3, 461--486.
\bibitem[Kap06]{Kap06}
I. Kapovich,
\emph{Currents on free groups},
in \emph{Topological and asymptotic aspects of group theory},
Contemp. Math., vol.~394, Amer. Math. Soc., Providence, RI, 2006, 149--176.
\bibitem[KapLus09]{KapLus09}
I. Kapovich and M. Lustig,
\emph{Geometric intersection number and analogues of the curve complex for free groups},
Geom. Topol. \textbf{13} (2009), no.~3, 1805--1833.
\bibitem[KapLus10]{KapLus10}
I. Kapovich and M. Lustig,
\emph{Intersection form, laminations and currents on free groups},
Geom. Funct. Anal. \textbf{19} (2010), no.~5, 1426--1467.
\bibitem[Kap17]{Kap17}
I. Kapovich,
\emph{An integral weight realization theorem for subset currents on free groups},
Topology Proc. \textbf{50} (2017), 213--236.
\bibitem[Kap26]{Kap26}
I. Kapovich,
\emph{On the Hausdorff dimension and attracting laminations for fully irreducible automorphisms of free groups},
J. Groups Complex. Cryptol. \textbf{18} (2026), no.~1, 5:1--5:32.
\bibitem[KN13]{KN13}
I. Kapovich and T. Nagnibeda,
\emph{Subset currents on free groups},
Geom. Dedicata \textbf{166} (2013), 307--348.
\bibitem[KM04]{KM04}
A. S. Kechris and B. D. Miller,
\emph{Topics in Orbit Equivalence},
Lecture Notes in Mathematics, vol.~1852,
Springer, Berlin, 2004.
\bibitem[KL24]{KL24}
D. Kielak and M. Linton,
\emph{Virtually free-by-cyclic groups},
Geom. Funct. Anal. \textbf{34} (2024), 1580--1608.
\bibitem[Martin95]{Martin95}
R. Martin,
\emph{Non-uniquely ergodic foliations of thin type, measured currents and automorphisms of free groups},
Ph.D. thesis, University of California, Los Angeles, 1995.
\bibitem[Phe01]{Phe01}
R. R. Phelps,
\emph{Lectures on Choquet's Theorem},
2nd ed., Lecture Notes in Mathematics, vol.~1757,
Springer, Berlin, 2001.
\bibitem[Que10]{Que10}
M. Queff\'elec,
\emph{Substitution Dynamical Systems---Spectral Analysis},
2nd ed., Lecture Notes in Mathematics, vol.~1294,
Springer, Berlin, 2010.
\bibitem[Sas22]{Sas22}
D. Sasaki,
\emph{Subset currents on surfaces},
Mem. Amer. Math. Soc. \textbf{278} (2022), no.~1368.
\bibitem[Yan12]{Yan12}
W.-Y. Yang,
\emph{Limit sets of relatively hyperbolic groups},
Geom. Dedicata \textbf{156} (2012), 1--12.

\end{thebibliography}
\end{document}